\numberwithin{equation}{section}
 \DeclareMathOperator{\tor}{Tor}
\DeclareMathOperator{\lcm}{lcm} 
\DeclareMathOperator{\Ann}{Ann}
\DeclareMathOperator{\supp}{supp}\DeclareMathOperator{\Char}{char}
\DeclareMathOperator{\Span}{Span}
\theoremstyle{plain}
\newtheorem{thm}{Theorem}[section]
\newtheorem{lem}[thm]{Lemma}
\newtheorem{prop}[thm]{Proposition}
\newtheorem{cor}[thm]{Corollary}
\theoremstyle{definition}
\newtheorem{defn}[thm]{Definition}
\newtheorem{exmp}[thm]{Example}
\newtheorem*{rem}{Remark}
\theoremstyle{remark}
\newtheorem{case}{Case}
\newtheorem{subcase}{Case}
\def\A{\mathbb{A}}
\def\N{\mathbb{N}}
\begin{document}

    \begin{center}
        \LARGE Dynatomic cycles for morphisms of projective varieties\\
        \bigskip
        \normalsize By \it Benjamin A. Hutz
    \end{center}

    \paragraph{Abstract.}
        We prove the effectivity of the zero-cycles of formal periodic points, dynatomic cycles, for morphisms of projective varieties.  We then analyze the degrees of the dynatomic cycles and multiplicities of formal periodic points and apply these results to the existence of periodic points with arbitrarily large primitive periods.


     \section{Introduction}
        Let $K$ be a field and $X/K$ a projective variety.  Let $\phi: X/K \to X/K$ be a morphism
        defined over $K$.  We can iterate the morphism $\phi$ and study the properties of the
        periodic points of the resulting dynamical system.  In this paper, we consider
        $K$ an algebraically closed field and study the zero cycle of
        formal $n$-periodic points, the \emph{$n$-th dynatomic cycle}, and show that it is effective for all positive integers $n$ (Theorem \ref{thm15}), resolving
        a conjecture of Morton and Silverman in the affirmative \cite[Conjecture 1.1]{Silverman6}.  We further show
        that the periodic points of formal period $n \neq 0$ in $K$ with multiplicity one
        have primitive period $n$ (Theorem \ref{thm17}).  We relate the degrees of the dynatomic cycles
        to periodic Lefschetz numbers and use information about the degrees to investigate the
        existence of primitive periodic points.  In particular, we show that the
        dynamical systems constructed on Wehler K3 surfaces and the dynamical systems arising from morphisms of
        projective space have periodic points with arbitrarily large primitive
        periods (Theorem \ref{thm19} and Theorem \ref{thm20}).  Much of this work is
        from the author's doctoral thesis \cite[Chapter 3]{Hutz}.

        We now describe our results in more detail.
        We denote $\phi^n$ as the $n$-th iterate of the morphism $\phi$.
        If $\phi^n(P) = P$ for some $P \in X(K)$ and $n \in \mathbb{N}$, then $P$ is
        called a \emph{periodic point of period $n$} for $\phi$.  If $n$ is the smallest such
        period, then $n$ is called the \emph{primitive} period of $P$.  Consider the cycles in $X \times X$: The graph of $\phi^n$ defined as
        $\Gamma_n = \sum_{x \in X} (x,\phi^{n}(x))$ and the diagonal $\Delta = \sum_{x \in X}(x,x)$.
        \begin{defn}
            For $n \geq 1$, we say that $\phi^{n}$ is \emph{non-degenerate} if $\Delta$ and $\Gamma_n$
            intersect properly.
        \end{defn}
        \begin{rem}
            If $\phi^n$ is non-degenerate, then $\phi^d$ is non-degenerate for all $d \mid n$.
        \end{rem}
        Assume that $\phi^n$ is non-degenerate, and let $P \in X(K)$.
        Define $a_P(n)$ to be the intersection multiplicity of $\Gamma_n$ and $\Delta$ at $(P,P)$ and \begin{equation*}
            \Phi_n(\phi) = \sum_{P \in X} i(\Gamma_n,\Delta;P) (P) = \sum_{P \in X} a_P(n)(P).
        \end{equation*}
        Notice that this intersection contains all of the periodic points of period $n$
        for $\phi$.  We want to examine only the primitive $n$-periodic points.  Define
        \begin{equation*}
            a_P^{\ast}(n) = \sum_{d \mid n} \mu\left(\frac{n}{d}\right) a_P(n)
        \end{equation*}
        and
        \begin{equation*}
            \Phi_n^{\ast}(\phi) = \sum_{d \mid n} \mu\left(\frac{n}{d}\right)
            \Phi_d(\phi) = \sum_{P \in X} a_{P}^{\ast}(n) (P),
        \end{equation*}
        where $\mu$ is the M\"{o}bius function.
        \begin{defn}
            We call $\Phi^{\ast}_n(\phi)$ the \emph{$n$-th dynatomic cycle} and $a_P^{\ast}(n)$ the
            \emph{multiplicity} of $P$ in $\Phi^{\ast}_n(\phi)$.  If
            $a_P^{\ast}(n) >0$, then we call $P$ a \emph{formal}
            periodic point of \emph{formal} period $n$.
        \end{defn}
        All periodic points of primitive period $n$ are points of formal period $n$
        (Proposition \ref{prop16}(\ref{prop16b})), but there may be periodic points with formal period $n$ and primitive period strictly less than $n$ \cite[Theorem 2.4]{Morton}.

        For $\phi:\A^1 \to \A^1$, a single variable polynomial map, Morton showed that $\Phi^{\ast}_n(\phi)$ is effective; and in the case where $n \neq 0$ in $K$ and it has points with multiplicity greater than one, its points of nonzero multiplicity are exactly the points of primitive period $n$ \cite[Theorem 2.4 and Theorem 2.5]{Morton}.  Morton and Silverman went on to show the effectivity of $\Phi^{\ast}_n(\phi)$ for a non-degenerate morphism of a nonsingular projective curve and for a non-degenerate \emph{auto}morphism of projective space \cite[Theorem 2.1 and Theorem 3.1]{Silverman6}.  They also conjectured effectivity for non-degenerate morphisms of nonsingular projective varieties \cite[Conjecture 1.1]{Silverman6}.

        In Section \ref{sect1} we prove that $\Phi^{\ast}_n(\phi)$ is effective for non-degenerate
        morphisms of non-singular, irreducible, projective varieties
        and describe the possible values of $n$ for which a periodic point $P$ of $\phi$ has
        non-zero multiplicity in $\Phi^{\ast}_n(\phi)$ (Theorem \ref{thm15}).

        As in the one-dimensional case, the proof is carried out
        by carefully examining when the multiplicity of a fixed point $P$ in $\Phi_n(\phi)$ is greater
        than the multiplicity of $P$ in $\Phi_1(\phi)$.  However, several new
        ideas and a lot of additional work are needed in the higher dimensional case.  Some of the
        difficulties encountered are taking into account the higher $\tor$ modules
        in the intersection theory, which turn out to all be identically 0 (Theorem \ref{thm10}),
        using the theory of standard bases to obtain information about the multiplicity of a
        point in $\Phi_n(\phi)$ (Proposition \ref{prop10}), and iterating local power
        series representations of the morphism.

        From this detailed analysis of the multiplicities, in Section \ref{formal_periodic} we show that
        periodic points of formal period $n$ with multiplicity one and $n\neq 0$ in $K$ have
        primitive period $n$.  In other words, $a_P^{\ast}(n) =1$ for $n \neq 0$ in $K$
        implies that $P$ is a periodic point of primitive period $n$.  This generalizes
        \cite[Theorem 2.5]{Morton} to morphisms of projective varieties.

        In Section \ref{basic_properties} we state some basic properties of $\Phi_n(\phi)$ and
        $\Phi^{\ast}_n(\phi)$.
        In Section \ref{lefschetz} we note the similarity to periodic Lefschetz numbers, and in Section \ref{applications} we state results similar to those of \cite{Fagella,Llibre,Shub} on the existence of primitive periodic points.  In particular, if $P$ is a periodic point, then the sequence $a_P(n)$ for $n \neq 0$ in $K$ is bounded (Theorem \ref{thm22}), and if $\deg(\Phi_n)$ is unbounded for $n \neq 0$ in $K$, then there are periodic points with arbitrarily large primitive periods and infinitely many periodic points (Corollary \ref{cor10}).
        In Section \ref{wehler} these results are applied to dynamical systems on Wehler K3 surfaces studied in
        \cite{Silverman4, Silverman5} and in Section \ref{pn} to dynamical systems arising from morphisms of projective space.

        The cycles $\Phi_n(\phi)$ and $\Phi^{\ast}_n(\phi)$ occur with great frequency in the
        literature, under a variety of notations and with a number of results
        stemming from the fact they are effective, see for example
        \cite{Morton2, Morton4, Morton5, Morton, Silverman6, Silverman7, Morton3, Vivaldi}.
        In particular, \cite{Morton,Vivaldi} contain Galois theoretic results
        in the single-variable polynomial case where $\Phi^{\ast}_n(\phi)$ has no points of multiplicity greater than one; many of the arguments of these two articles carry through to the higher dimensional case given that $\Phi^{\ast}_n(\phi)$ is effective (see \cite[Chapter 4]{Hutz}).

        The author would like to thank his advisor Joseph Silverman for his many
        insightful suggestions and ideas and also Dan Ambramovich and Michael Rosen
        for their suggestions.  The author also thanks Jonathan Wise for showing him the proof of Proposition \ref{prop20}(\ref{prop20a}) and Michelle Manes for her contribution to Proposition \ref{prop22}.

    \section{Effectivity of $\Phi^{\ast}_n(\phi)$} \label{sect1}
        Let $K$ be an algebraically closed field and let $X$ be a non-singular, irreducible, projective variety of
        dimension $b$ defined over $K$.  Let $\phi:X \to X$ be a morphism defined over $K$ such that $\phi^n$ is non-degenerate.  Define the cycles in $X \times X$: The graph of $\phi^n$ defined as
        $\Gamma_n = \sum_{x \in X} (x,\phi^{n}(x))$ and the diagonal $\Delta = \sum_{x \in X}(x,x)$. Let $R_P$ be the local ring of $X \times X$ at $(P,P)$ and let $I_{\Delta},
        I_{\Gamma_n} \subset R_P$ be the ideals of $\Delta$ and $\Gamma_n$, respectively.
        The following steps outline the proof of the effectivity of $\Phi^{\ast}_n(\phi)$.
        \begin{enumerate}
            \item Define the intersection multiplicity and show that $\Phi^{\ast}_n(\phi)$ is a zero-cycle.
            \item Show that the naive intersection theory is, in fact,
                correct (Theorem \ref{thm10}). Specifically, show that
                \begin{equation*}
                    \tor_i(R_P/I_{\Delta},R_P/I_{\Gamma_n})=0 \quad \text{for all } i >0.
                \end{equation*}
            \item Determine conditions on $n$ for when $a_P(n) > a_P(1)$ (Proposition \ref{prop10}).
            \item Show that $a_{P}^{\ast}(n) \geq 0$ for all $P$ and $n$ (Theorem \ref{thm15}).
        \end{enumerate}
        In what follows, the concept of dimension will be used in several different
        contexts.  We will denote
        \begin{itemize}
            \item $\dim R$ for the Krull dimension of a ring $R$,
            \item $\dim M$ for the Krull dimension of $R/\Ann(M)$ where $\Ann(M)$ is the
                annihilator of the $R$-module $M$, and
            \item $\dim_K V$ for the dimension of the finite dimensional $K$-vector space $V$.
        \end{itemize}

    \subsection{Intersection multiplicity}
        Using Serre's definition of intersection multiplicity and working over the completion
        $\widehat{R_P}$ of $R_P$, we have
        \begin{equation*}
            i(\Delta,\Gamma_n;P) = \sum_{i=0}^{b -1} (-1)^{i}
            \dim_K(\tor_{i}(\widehat{R_P}/I_{\Delta},\widehat{R_P}/I_{\Gamma_n})).
        \end{equation*}

        Since $\phi^n$ is non-degenerate, the cycles $\Delta$ and $\Gamma_n$ intersect
        properly.  We also know $X \times X$ has dimension $2b$, $\Delta$ has dimension $b$, and
        $\Gamma_n$ has dimension $b$.  Consequently, $\Phi_n(\phi)$ is a zero-cycle with a finite number
        of points with non-zero multiplicity.  Therefore,
        $\Phi^{\ast}_n(\phi)$ is also a zero-cycle.

        In local coordinates, we have
        \begin{equation*}
            \widehat{R_P} \cong K[[x_1,\ldots,x_b,y_1,\ldots,y_b]].
        \end{equation*}
        \begin{defn}
            Let $\phi(\textbf{x}) = [\phi_1(\textbf{x}),\ldots,\phi_b(\textbf{x})]$, where
            $\textbf{x} = (x_1,\ldots,x_b)$.  Then denote
            \begin{equation*}
                \phi^n(\textbf{x}) = [\phi_1^{(n)}(\textbf{x}),\ldots,\phi_b^{(n)}(\textbf{x})]
            \end{equation*}
            as the coordinates of the $n$-th iterate of $\phi$.
        \end{defn}
        Then we have
        \begin{equation*}
            I_{\Delta} = (x_1-y_1,\ldots, x_b-y_b) \quad \text{and} \quad
            I_{\Gamma_n} = (\phi_1^{(n)}(\textbf{x})-y_1, \ldots, \phi_b^{(n)}(\textbf{x})-y_b).
        \end{equation*}

        We will use the non-degeneracy of $\phi^n$ and the following theorem to show that $\tor_i(R_P/I_{\Delta},R_P/I_{\Gamma_n}) =0$ for all $i>0$.
        \begin{thm}\textup{(\cite[Corollary to Theorem V.B.4]{Serre})} \label{thm9}
            Let $(R,\mathfrak{m})$ be a regular local ring of dimension $b$, and let $M$ and $N$ be two
            non-zero finitely generated $R$-modules such that $M \otimes N$ is of finite
            length.  Then $\tor_i(M,N) = 0$ for all $i>0$ if and only if $M$ and $N$ are
            Cohen-Macaulay modules and $\dim M + \dim N = b$.
        \end{thm}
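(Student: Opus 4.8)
The plan is to prove both implications by homological algebra over the regular local ring $R$, the essential inputs being the Auslander-Buchsbaum formula, Auslander's depth formula for a pair of modules with vanishing higher $\tor$, and the acyclicity lemma of Peskine and Szpiro; the latter two I would invoke as black boxes. Two preliminary remarks are convenient. Since $M$ and $N$ are nonzero finitely generated modules over the local ring $R$, Nakayama's lemma gives $M\otimes_R N\neq 0$; being of finite length, it then has support exactly $\{\mathfrak{m}\}$, so $\depth(M\otimes_R N)=0$. Also $\supp\tor_i(M,N)\subseteq\supp M\cap\supp N=\supp(M\otimes_R N)=\{\mathfrak{m}\}$, so each $\tor_i(M,N)$ is finitely generated and supported at $\mathfrak{m}$, hence of finite length.

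For the ``only if'' direction, assume $\tor_i(M,N)=0$ for all $i>0$. Serre's dimension inequality gives $\dim M+\dim N\le b$. Since $R$ is regular, $\operatorname{pd}_R M<\infty$, so Auslander's depth formula applies to the pair $(M,N)$ and yields $\depth M+\depth N=\depth R+\depth(M\otimes_R N)=b+0=b$. Combining this with $\depth M\le\dim M$ and $\depth N\le\dim N$ gives
\[
  b=\depth M+\depth N\le\dim M+\dim N\le b,
\]
so equality holds throughout; in particular $\depth M=\dim M$ and $\depth N=\dim N$, i.e.\ $M$ and $N$ are Cohen-Macaulay, and $\dim M+\dim N=b$.

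For the ``if'' direction, assume $M$ and $N$ are Cohen-Macaulay with $\dim M+\dim N=b$. By Auslander-Buchsbaum, $\operatorname{pd}_R M=\depth R-\depth M=b-\dim M=\dim N=:p$. Let $0\to F_p\to\cdots\to F_0\to M\to 0$ be the minimal free resolution, and consider the complex $L_\bullet\colon\ 0\to F_p\otimes_R N\to\cdots\to F_0\otimes_R N\to 0$, for which $H_0(L_\bullet)=M\otimes_R N$ and $H_i(L_\bullet)=\tor_i(M,N)$ for $i\ge 1$. Since $F_i\otimes_R N\cong N^{r_i}$ with $r_i\ge 1$, we have $\depth(F_i\otimes_R N)=\depth N=\dim N=p\ge i$ for $1\le i\le p$; and each $H_i(L_\bullet)$ with $i\ge 1$ equals $\tor_i(M,N)$, which is of finite length and hence is either $0$ or of depth $0$. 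The acyclicity lemma of Peskine and Szpiro then forces $H_i(L_\bullet)=0$ for all $i\ge 1$, that is, $\tor_i(M,N)=0$ for all $i>0$.

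I expect the main obstacle to be the depth formula underlying the ``only if'' direction. Its proof goes by induction on $\operatorname{pd}_R M$ (the case $\operatorname{pd}_R M=0$, where $M$ is free, being immediate): from a first-syzygy sequence $0\to M'\to F\to M\to 0$ with $F$ free and $\operatorname{pd}_R M'=\operatorname{pd}_R M-1$, the hypothesis $\tor_1(M,N)=0$ makes $0\to M'\otimes_R N\to F\otimes_R N\to M\otimes_R N\to 0$ exact, and one tracks depths through this sequence and the syzygy sequence using the standard depth inequalities for short exact sequences. The acyclicity lemma is similarly classical, proved by induction on the length of the complex together with localization at primes $\mathfrak{p}\neq\mathfrak{m}$; I would cite both rather than reprove them.
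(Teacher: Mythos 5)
The paper does not prove Theorem \ref{thm9}: it cites the statement directly to Serre's \emph{Local Algebra}, so there is no in-text argument of the paper's own to compare against. Taken on its own terms, your proof is correct. The preliminary reductions are sound: Nakayama gives $M\otimes_R N\neq 0$, so finite length forces $\supp(M\otimes_R N)=\{\mathfrak m\}$ and $\depth(M\otimes_R N)=0$, and $\supp\tor_i(M,N)\subseteq\supp M\cap\supp N$ makes every higher $\tor$ finite length. For the forward implication, combining Serre's dimension inequality $\dim M+\dim N\le b$ with Auslander's depth formula $\depth M+\depth N=\depth R+\depth(M\otimes_R N)=b$ (applicable because regularity of $R$ gives $\operatorname{pd}_R M<\infty$) and $\depth\le\dim$ does force all the inequalities to equalities, giving Cohen--Macaulayness and the dimension equality. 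For the converse, the Auslander--Buchsbaum computation $\operatorname{pd}_R M=b-\depth M=b-\dim M=\dim N=:p$ is correct, and your verification of the two hypotheses of the Peskine--Szpiro acyclicity lemma for $F_\bullet\otimes_R N$ --- namely $\depth(F_i\otimes_R N)=\depth N=p\ge i$ for $1\le i\le p$, and each $\tor_i(M,N)$ for $i\ge 1$ being finite length hence of depth zero when nonzero --- is exactly what the lemma needs to kill the higher homology. This route is genuinely different from Serre's own, which reaches the corollary through reduction to the diagonal, Koszul complexes and systems of parameters rather than via the Auslander and Peskine--Szpiro results (which postdate his book); your version is shorter at the price of invoking those as black boxes, and the paper's use of the theorem is unaffected either way since it is quoted rather than reproved.
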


        \begin{thm} \label{thm10}
            Let $X$ be a non-singular, irreducible, projective variety defined over a
            field $K$ and $\phi:X \to X$ a morphism defined over $K$ such that $\phi^n$
            is non-degenerate.  Let $P \in X(K)$ and $R_P$ the local ring of $X \times X$
            at $(P,P)$.  Let $\Delta,\Gamma_n \subset X \times X$ be the diagonal and the graph
            of $\phi^n$, respectively, and let $I_{\Delta}, I_{\Gamma_n} \subset R_P$ be their
            ideals. Then, $\tor_i(R_P/I_{\Delta},R_P/I_{\Gamma_n}) =0$ for all $i>0$.
        \end{thm}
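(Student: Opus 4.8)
The plan is to apply Serre's criterion, Theorem \ref{thm9}, directly to $R_P$ with $M = R_P/I_\Delta$ and $N = R_P/I_{\Gamma_n}$. First I would dispose of the trivial case: if $\phi^n(P) \neq P$ then $(P,P) \notin \Gamma_n$, so $I_{\Gamma_n}$ is the unit ideal of $R_P$, $N = 0$, and every $\tor_i(M,N)$ vanishes. So assume $\phi^n(P) = P$; then $(P,P) \in \Delta \cap \Gamma_n$ and both $M$ and $N$ are nonzero finitely generated $R_P$-modules. Since $X$ is non-singular and $P \in X(K)$, the completion $\widehat{R_P} \cong K[[x_1,\dots,x_b,y_1,\dots,y_b]]$ is regular of dimension $2b$, hence $R_P$ is itself a regular local ring of dimension $2b$, the ambient ring to which Theorem \ref{thm9} applies.

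Next I would verify the three hypotheses of Theorem \ref{thm9}. \emph{Finite length of $M \otimes_{R_P} N$:} this module is $R_P/(I_\Delta + I_{\Gamma_n})$, the local ring of the scheme-theoretic intersection $\Delta \cap \Gamma_n$ at $(P,P)$; non-degeneracy of $\phi^n$ means $\Delta$ and $\Gamma_n$ meet properly, and since $\dim \Delta = \dim \Gamma_n = b$ inside the $2b$-dimensional $X \times X$ the intersection is zero-dimensional, so $M \otimes_{R_P} N$ is Artinian, hence of finite length. \emph{Dimension count:} $\Ann_{R_P}(M) = I_\Delta$ and $\Ann_{R_P}(N) = I_{\Gamma_n}$, so $\dim M$ and $\dim N$ are the Krull dimensions of the local rings of $\Delta$ and of $\Gamma_n$ at $(P,P)$; since $\Delta \cong X$ and $\Gamma_n \cong X$ (the latter via the first projection $(x,\phi^n(x)) \mapsto x$, equivalently by eliminating $y_j = \phi_j^{(n)}(\mathbf{x})$ from $I_{\Gamma_n}$) are irreducible of dimension $b$, we get $\dim M + \dim N = b + b = 2b = \dim R_P$. \emph{Cohen--Macaulayness:} under the two isomorphisms just used, $R_P/I_\Delta$ and $R_P/I_{\Gamma_n}$ are both isomorphic as rings to $\mathcal{O}_{X,P}$, which is regular (as $X$ is non-singular) and hence Cohen--Macaulay, and $R_P/I$ is a Cohen--Macaulay $R_P$-module precisely when it is a Cohen--Macaulay ring, so $M$ and $N$ are Cohen--Macaulay $R_P$-modules.

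All hypotheses hold, so Theorem \ref{thm9} yields $\tor_i^{R_P}(M,N) = 0$ for all $i > 0$, which is the claim; equivalently, by faithful flatness of $\widehat{R_P}$ over $R_P$, $\tor_i^{\widehat{R_P}}(\widehat{M},\widehat{N}) = 0$ for all $i > 0$, the form used in the intersection-multiplicity formula. The only step requiring a genuine observation rather than bookkeeping is the Cohen--Macaulay verification: one must recognize that, despite the complicated-looking generators of $I_{\Gamma_n}$ in $R_P$, the quotient is just a copy of the non-singular local ring $\mathcal{O}_{X,P}$, because the graph maps isomorphically onto $X$. Everything else is dimension counting together with the defining property of non-degeneracy, with the mild care needed to handle $\phi^n(P) \neq P$ separately so that the non-vanishing hypothesis of Theorem \ref{thm9} is not violated.
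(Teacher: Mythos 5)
Your proposal is correct and reaches the same conclusion, but it establishes the Cohen--Macaulay hypothesis of Theorem \ref{thm9} by a genuinely different route from the paper. The paper works entirely through the commutative algebra of systems of parameters: each of $I_{\Delta}$ and $I_{\Gamma_n}$ is generated by $b$ elements, proper intersection forces the union of these $2b$ generators to be a system of parameters for $R_P$, and since $R_P$ is Cohen--Macaulay, quotienting by part of a system of parameters yields a Cohen--Macaulay module of the expected dimension. You instead make the geometric observation that the diagonal map $X \to \Delta$ and the graph map $x \mapsto (x,\phi^n(x))$ are isomorphisms of $X$ onto $\Delta$ and $\Gamma_n$ respectively, so $R_P/I_{\Delta} \cong \mathcal{O}_{X,P} \cong R_P/I_{\Gamma_n}$ as rings, and these are regular (hence Cohen--Macaulay) because $X$ is non-singular; combined with the standard fact that $R/I$ is Cohen--Macaulay as an $R$-module iff it is Cohen--Macaulay as a ring, this dispenses with the system-of-parameters machinery entirely. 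Your argument is shorter and more structural, exploiting what is special about diagonals and graphs rather than just the fact that the ideals have the right number of generators; the paper's is closer to the format of Serre's criteria and would in principle extend to any two local complete intersections of complementary dimension meeting properly, not just graphs. You also handle the degenerate case $\phi^n(P) \neq P$ explicitly, which the paper leaves implicit; this is the right thing to do, since Theorem \ref{thm9} requires $M$ and $N$ nonzero, and in that case $R_P/I_{\Gamma_n} = 0$ so the Tor's vanish for free.
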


        \begin{proof}
            Let $b=\dim{X}$, then we have $\dim{X \times X} = 2b$ and $\dim{\Delta}=\dim{\Gamma_n}=b$.  The ideals $I_{\Delta}$ and $I_{\Gamma_n}$ are each generated by $b$ elements and $\Delta$ and $\Gamma_n$ intersect properly.  Therefore,
            \begin{equation*}
                \dim_K(R_P/(I_{\Delta} + I_{\Gamma_n})) = \text{length}(R_P/I_{\Delta} \otimes R_P/I_{\Gamma_n}) < \infty.
            \end{equation*}

            By \cite[Proposition III.B.6]{Serre} the union of
            the generators of $I_{\Delta}$ and the generators of $I_{\Gamma_n}$ are a system
            of parameters for $R_P$. Because $R_P$ is Cohen-Macaulay by \cite[Corollary 3 to Theorem IV.D.9]{Serre} we can apply \cite[Corollary to Theorem IV.B.2]{Serre} to
            $I_{\Delta}$ and its generators to conclude that $R_P/I_{\Delta}$ is Cohen-Macaulay of dimension $b$ and, similarly with $I_{\Gamma_n}$, to conclude that $R_P/I_{\Gamma_n}$ is Cohen-Macaulay of
            dimension $b$.

            We have fulfilled the hypotheses of Theorem \ref{thm9}; consequently, we have that
            \begin{equation*}
                \tor_i(R_P/I_{\Delta},R_P/I_{\Gamma_n}) =0 \quad \text{for all } i > 0.
            \end{equation*}
        \end{proof}

    \subsection{$\tor_0$ module}
        If $P$ is not a periodic point, then $a_P(n) =0 $ for all $n$, so we will assume that $P$ is a periodic point.  If $a_P(1) = 0$, then $P$ has some primitive period $m>1$.  If $m \nmid n$ then $a_P(n)=0$, so we may replace $\phi$ by $\phi^m$ and assume that $P$ is a fixed point for $\phi$ and, hence, $a_P(1) > 0$.   For $P$, a fixed point of $\phi$, we can iterate a local representation of $\phi$ as a family of power series.

        From Theorem \ref{thm10} we know the naive intersection index
        \begin{align*}
            a_P(&n) = i(\Delta,\Gamma_n;P) \\
            &= \dim_K(\tor_0(R_P/I_{\Delta},R_P/I_{\Gamma_n})) \\
            &= \dim_K(R_P/(I_{\Delta}+I_{\Gamma_n}))
        \end{align*}
        is, in fact, correct in our situation.  To prove the effectivity of $\Phi^{\ast}_n(\phi)$,
        we will use conditions on $n$ for $a_P(n)$ to be greater than $a_P(1)$.  To determine these conditions, we will consider local power series representations of $\phi$ and the theory of standard bases.  For information on standard bases, see \cite[Chapter 4]{CLO2}.  Below, we recall the needed terminology.

        \begin{defn}
            Recall that for a ring of formal power series $K[[X_1,\ldots,X_h]]$, we can write an element $f \in K[[X_1,\ldots,X_h]]$ as
            \begin{equation*}
                f = \sum_{v \in \mathbb{N}^{h}} f_vX^{v}.
            \end{equation*}
            The monomial \emph{support of $f$} is defined as
            \begin{equation*}
                \supp(f) = \{f_vX^v \mid f_v \neq 0\}.
            \end{equation*}
            If $f \neq 0$, then $\supp(f)$ has a least element under any admissible monomial ordering.  We call this least element the \emph{leading monomial of $f$}, denoted by $LM(f)$.  We denote $v(f)$ the exponent of the leading monomial.  Then
            \begin{equation*}
                LM(f) = f_{v(f)}X^{v(f)}
            \end{equation*}
            and we call $X^{v(f)}$ the \emph{leading term of $f$} and denote it by
            $LT(f)$.

            Let $I$ be an ideal in $K[[X_1,\ldots,X_h]]$.  We define the \emph{leading term ideal of $I$}
            as
            \begin{equation*}
                LT(I)= \text{ the polynomial ideal generated by } \{X^{v} \mid \exists f \in I \text{ with }
                LT(f) = X^{v}\}.
            \end{equation*}
        \end{defn}

        \begin{defn} \label{defn4}
            A non-zero element $f \in K[[X_1,\ldots,X_h]]$ is called \emph{self-reduced}
            with respect to an admissible monomial ordering if
            \begin{equation*}
                LT(f) \nmid F \text{ for all } F \in \supp(f)-LT(f).
            \end{equation*}
        \end{defn}

        Finally we recall three facts that we will need (see \cite[Chapter 4.4]{CLO2}).
        \begin{thm} \label{thm13}
            The following are equivalent.
            \begin{enumerate}
                \item There exists a standard basis for $I$.
                \item Every $f \in K[[X_1,\ldots,X_h]]$ has a unique standard
                    remainder modulo $I$.
                \item Every $f \in K[[X_1,\ldots,X_h]]$ has a standard remainder
                modulo $I$.
        \end{enumerate}
        \end{thm}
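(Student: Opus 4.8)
The plan is to prove the equivalence by the cycle of implications $(1)\Rightarrow(3)\Rightarrow(2)\Rightarrow(1)$, with a single substantive tool: a division algorithm on $K[[X_1,\ldots,X_h]]$ adapted to an admissible ordering. Throughout, recall that a standard basis of $I$ is a finite set $G=\{g_1,\ldots,g_s\}\subseteq I$ with $LT(I)=(LT(g_1),\ldots,LT(g_s))$, and that $r$ is a standard remainder of $f$ modulo $I$ when $f-r\in I$ and $\supp(r)\cap LT(I)=\varnothing$.

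For $(1)\Rightarrow(3)$ I would run the expected division loop against a standard basis $G$. Maintain a current tail $f^{(k)}$ with $f^{(0)}=f$, a partial quotient, and a partial remainder; at stage $k$ inspect the leading monomial $X^{v(f^{(k)})}$. If $X^{v(f^{(k)})}\in(LT(g_1),\ldots,LT(g_s))$, write it as $X^{a}LT(g_j)$ and subtract from $f^{(k)}$ the scalar multiple of $X^{a}g_j$ that cancels $LM(f^{(k)})$, recording $X^{a}g_j$ in the quotient; otherwise transfer $LM(f^{(k)})$ to the remainder and subtract it off. In both cases every monomial of $f^{(k+1)}$ strictly exceeds $v(f^{(k)})$ in the ordering, so $v(f^{(0)})<v(f^{(1)})<\cdots$ is strictly increasing; since only finitely many monomials in $h$ variables lie below any fixed total degree, the total degrees $|v(f^{(k)})|$ tend to infinity. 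Because $G$ is finite, the contribution at stage $k$ to the quotient or to the remainder has all of its monomials of total degree at least $|v(f^{(k)})|-C$ for a constant $C$ depending only on $G$, hence these contributions tend to $0$ in the $(X_1,\ldots,X_h)$-adic topology, so the partial quotient and partial remainder converge in $K[[X_1,\ldots,X_h]]$. Passing to the limit gives $f=\sum_i h_ig_i+r$ with $f-r\in(g_1,\ldots,g_s)\subseteq I$, and by construction $\supp(r)$ avoids $(LT(g_1),\ldots,LT(g_s))=LT(I)$, so $r$ is a standard remainder.

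For $(3)\Rightarrow(2)$ existence is granted, so only uniqueness is needed, and this is a one-line leading-term argument: if $r_1,r_2$ are both standard remainders of $f$ modulo $I$, then $r_1-r_2\in I$ and $\supp(r_1-r_2)\cap LT(I)=\varnothing$; were $r_1\neq r_2$, the nonzero element $r_1-r_2$ of $I$ would satisfy $LT(r_1-r_2)\in LT(I)$, contradicting that its leading monomial lies in its own support. For $(2)\Rightarrow(1)$, note the polynomial ring $K[X_1,\ldots,X_h]$ is Noetherian (Dickson's lemma), so the monomial ideal $LT(I)$ is generated by finitely many monomials $X^{v_1},\ldots,X^{v_s}$; by the definition of $LT(I)$ each $X^{v_j}$ equals $LT(g_j)$ for some $g_j\in I$, and then $\{g_1,\ldots,g_s\}$ is a standard basis. (If generation of $I$ is taken to be part of the definition of a standard basis, it follows by dividing an arbitrary $f\in I$ by $\{g_1,\ldots,g_s\}$ as in $(1)\Rightarrow(3)$ and invoking uniqueness of the remainder, which must then be $0$.)

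The one genuinely delicate step is the convergence of the division loop in $(1)\Rightarrow(3)$. In the polynomial Gr\"obner-basis setting this is just the ascending chain condition on leading monomials, but here the ordering is only a well-ordering, not necessarily degree-compatible, so a term of some $g_j$ may be smaller in the ordering yet of much larger total degree, and the remainder may genuinely be an infinite series. The remedy is exactly the two observations above — a strictly increasing chain of monomials in $\N^h$ must have total degrees going to infinity, and finiteness of $G$ bounds how far below its leading monomial, in total degree, any monomial of $X^{a}g_j$ can lie — which together force $(X_1,\ldots,X_h)$-adic convergence of the formal sums; the remaining work is routine bookkeeping, namely a deterministic rule for choosing $g_j$ when several leading terms divide and a check that the limiting identity $f=\sum_i h_ig_i+r$ holds in $K[[X_1,\ldots,X_h]]$.
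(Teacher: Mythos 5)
The paper does not prove Theorem~\ref{thm13}; it is cited directly from \cite[Chapter 4.4]{CLO2}, so there is no paper proof to compare against. Your cycle $(1)\Rightarrow(3)\Rightarrow(2)\Rightarrow(1)$ with a power-series division algorithm is the standard route, and your arguments for $(3)\Rightarrow(2)$ (leading-term cancellation) and $(2)\Rightarrow(1)$ (Dickson's lemma applied to the monomial ideal $LT(I)$, lifting minimal generators to $LT(g_j)$ for $g_j\in I$) are correct; note, though, that your $(2)\Rightarrow(1)$ argument establishes $(1)$ unconditionally, so the theorem is logically vacuous as an ``equivalence'' once Dickson's lemma is available.

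The genuine gap is in $(1)\Rightarrow(3)$, exactly at what you call ``routine bookkeeping,'' namely the check that the limiting identity $f=\sum_i h_ig_i+r$ holds. You correctly show that $|v(f^{(k)})|\to\infty$ and that the per-stage contributions to the quotients and remainder tend to zero adically, so the partial sums converge; but this does not give $f^{(k)}\to 0$, because $v(f^{(k)})$ is the least element of $\supp(f^{(k)})$ in the chosen order, not the lowest-degree term, and for a non-degree-compatible admissible order the tails can retain small-degree terms that the $\omega$-indexed loop never reaches. Concretely, take $h=2$, lex with $X_1>X_2$ (an admissible well-ordering of $\mathbb{N}^2$), $I=(X_1^2,X_2)$ with standard basis $G=\{X_1^2,X_2\}$, and $f=X_1+\sum_{m\geq1}X_2^m$. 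Every $X_2^m$ is lex-smaller than $X_1$, so at each stage the least monomial is $X_2^{k+1}$ and you reduce by $g_2=X_2$, giving $f^{(k)}=X_1+X_2^{k+1}+\cdots$ for all $k$; the term $X_1$, which lies outside $LT(I)$ and must appear in the standard remainder, is never processed. The partial sums converge to $h_2=\sum_{m\geq0}X_2^m$ and $r=0$, yet $f-h_2g_2=X_1\neq 0=r$. To close the gap you must either restrict the admissible orderings to those of order type $\omega$ on $\mathbb{N}^h$ (equivalently, $\{v : X^v<X^a\}$ finite for every $a$, which degree-compatibility guarantees), so that the $\omega$-loop does visit every monomial of $f$, or carry out the reduction by transfinite recursion over the well-ordering and then verify separately that the resulting coefficient assignments actually define elements of $K[[X_1,\ldots,X_h]]$. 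Neither is bookkeeping.
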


        \begin{thm} \label{thm18}
            Every ideal $I \subset K[[X_1,\ldots,X_h]]$ has a universal standard basis.
        \end{thm}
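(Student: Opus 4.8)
The plan is to derive the theorem from a finiteness statement about the leading term ideals of $I$, taken over all admissible orderings. The first thing I would record is a transport principle: if $\prec$ and $\prec'$ are admissible orderings with $LT_{\prec}(I)=LT_{\prec'}(I)$, and $G$ is a \emph{reduced} standard basis of $I$ for $\prec$ — meaning every $g\in G$ has leading coefficient $1$ and no monomial occurring in $g$ other than $LT(g)$ lies in $LT_{\prec}(I)$ — then $G$ is also a standard basis of $I$ for $\prec'$, and in fact $LT_{\prec'}(g)=LT_{\prec}(g)$ for every $g\in G$. The proof is one line: since $g\in I$ we have $LT_{\prec'}(g)\in LT_{\prec'}(I)=LT_{\prec}(I)$, but $LT_{\prec'}(g)$ is one of the monomials occurring in $g$, and reducedness leaves $LT_{\prec}(g)$ as the only such monomial in $LT_{\prec}(I)$. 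Applying the same cancellation argument to the difference of two candidate elements shows that, for a fixed admissible ordering, the reduced standard basis is unique; combining this with the transport principle, the reduced standard basis of $I$ depends only on the ideal $LT_{\prec}(I)$, not on $\prec$ itself.

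Granting for the moment that only finitely many ideals $L_1,\dots,L_k$ occur as $LT_{\prec}(I)$, the theorem follows quickly. For each $L_j$ pick an admissible ordering $\prec_j$ with $LT_{\prec_j}(I)=L_j$, and let $G_j$ be the reduced standard basis of $I$ for $\prec_j$; such a $G_j$ exists by the usual construction — lift a finite monomial generating set of $L_j$ (finite by Dickson's lemma) to elements of $I$ having those leading terms, then pass to the reduced form using Theorem \ref{thm13}. Put $G=G_1\cup\cdots\cup G_k$, a finite subset of $I$. Given any admissible $\prec$, we have $LT_{\prec}(I)=L_j$ for some $j$, so by the transport principle $G_j$ is a standard basis of $I$ for $\prec$; since $G\supseteq G_j$ and $G\subseteq I$, the leading terms of the elements of $G$ all lie in $LT_{\prec}(I)$ while already those coming from $G_j$ generate it, so $G$ is a standard basis for $\prec$. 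As $\prec$ was arbitrary, $G$ is a universal standard basis.

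Thus the whole matter reduces to showing that $\{LT_{\prec}(I):\prec\text{ admissible}\}$ is finite, and this is the main obstacle. When $I$ is $\mathfrak{m}$-primary the argument is elementary: the standard monomials — those not lying in $LT_{\prec}(I)$ — form a $K$-basis of $K[[X_1,\dots,X_h]]/I$ (a standard consequence of the standard-remainder machinery, cf. Theorem \ref{thm13}), so there are exactly $\ell:=\dim_K K[[X_1,\dots,X_h]]/I$ of them, a number independent of $\prec$; since every divisor of a standard monomial is again standard and a monomial of degree $d$ has at least $d+1$ divisors, every standard monomial has degree $<\ell$, leaving only finitely many possible sets of standard monomials and hence only finitely many ideals $LT_{\prec}(I)$. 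This case is exactly what the present paper needs, since the ideal $I_{\Delta}+I_{\Gamma_n}\subset K[[x_1,\dots,x_b,y_1,\dots,y_b]]$ is $\mathfrak{m}$-primary because $\phi^n$ is non-degenerate. The general case, where $K[[X_1,\dots,X_h]]/I$ is infinite-dimensional and the counting breaks down, is harder: I would attack it by developing the local analogue of the Gröbner fan — showing that the admissible orderings, or weight vectors representing them, that induce a fixed leading term ideal form a polyhedral cone, that these cones cover the parameter space, and that the resulting fan is finite — or, alternatively, by transferring $I$ into the polynomial ring $K[X_1,\dots,X_h]$ via a homogenizing variable recording order and invoking the classical finiteness of initial ideals there, with the caveat that $I$ need not be generated by polynomials, so this transfer requires care. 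This finiteness statement is the one place where genuine work beyond bookkeeping is required.
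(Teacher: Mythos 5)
The paper does not prove this statement; it is recalled as a fact from \cite[Chapter 4.4]{CLO2} with no proof supplied, so there is nothing in the text to compare against directly. Your proposal is a genuine attempt at a proof and its skeleton is sound. The transport principle (a reduced standard basis depends only on the leading term ideal, not on the ordering realizing it) is correct, and the resulting reduction of the theorem to the finiteness of $\{LT_{\prec}(I):\prec \text{ admissible}\}$ is the standard strategy in the Gr\"obner/standard basis literature. The $\mathfrak m$-primary case is handled cleanly and completely: the number of standard monomials equals $\ell=\dim_K K[[X_1,\ldots,X_h]]/I$, divisor-closedness of the standard monomial set forces every standard monomial to have degree $<\ell$, and finiteness follows. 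You are also right that this is precisely the case the paper invokes — since $\phi^n$ is non-degenerate, $I_n=I_{\Delta}+I_{\Gamma_n}$ is $\mathfrak m$-primary in $\widehat{R_P}$, so your argument fully justifies the use the paper makes of Theorem~\ref{thm18} (in Corollary~\ref{cor9} and the subsequent lemmas) even if it does not establish the theorem in the generality in which it is stated.

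The one gap you flag — finiteness of the set of leading term ideals for an arbitrary (not $\mathfrak m$-primary) ideal $I\subset K[[X_1,\ldots,X_h]]$ — is real and is indeed where the substance of the general statement lies. Your two suggested routes are both viable: the local Gr\"obner fan approach (the set of weight vectors inducing a fixed $LT(I)$ is a relatively open polyhedral cone, and a Noetherian/compactness argument bounds the number of cones), and a homogenization transfer to the polynomial ring, for which the caveat you raise (elements of $I$ need not be polynomial, so one must work with a degree filtration and truncations rather than a naive substitution) is exactly the subtlety that makes this route delicate. As submitted, the proposal proves the special case the paper uses and correctly isolates, without resolving, the remaining general finiteness; since the paper itself offers only a citation, that is a reasonable and honest state to leave it in, but it should not be presented as a complete proof of the theorem as stated.
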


        \begin{thm} \label{thm1}
            Let $I \subset K[[X_1,\ldots,X_h]]$ be an ideal with $\dim K[[X_1,\ldots,X_h]]/I=0$.  Then $K[[X_1,\ldots,X_h]]/I$ is isomorphic as a $K$-vector space to $\Span(X^{v} \mid X^{v} \not\in LT(I))$.
        \end{thm}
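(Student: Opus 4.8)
The plan is to show that the canonical surjection $q\colon K[[X_1,\ldots,X_h]] \to K[[X_1,\ldots,X_h]]/I$ restricts to a $K$-linear isomorphism from $V:=\Span(X^v \mid X^v \notin LT(I))$ onto the quotient. First I would record a finiteness reduction: since $\dim K[[X_1,\ldots,X_h]]/I = 0$, the ring $K[[X_1,\ldots,X_h]]/I$ is an Artinian local ring, so $\mathfrak{m}^N \subseteq I$ for some $N$, where $\mathfrak{m} = (X_1,\ldots,X_h)$. Hence every monomial $X^v$ with $|v| \geq N$ lies in $I$, so $X^v = LT(X^v) \in LT(I)$; thus only finitely many monomials avoid $LT(I)$, $V$ is a finite-dimensional $K$-vector space, and every element of $V$ is a polynomial whose support is contained in $\{X^v \mid X^v \notin LT(I)\}$.

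Next I would prove surjectivity of $q|_V$. By Theorem \ref{thm18}, $I$ has a standard basis $G = \{g_1,\ldots,g_s\}$, so by Theorem \ref{thm13} every $f \in K[[X_1,\ldots,X_h]]$ has a standard remainder $r$ modulo $I$. By the definition of standard remainder, $f - r \in I$ and no monomial $X^v$ with $r_v \neq 0$ is divisible by any $LT(g_i)$; since $G$ is a standard basis, $LT(I)$ is exactly the monomial ideal generated by $LT(g_1),\ldots,LT(g_s)$, so this says precisely that $r \in V$. Then $q(r) = q(f)$, giving surjectivity. For injectivity, suppose $r \in V$ with $r \in I$ and $r \neq 0$. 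Then $LT(r) = X^{v(r)}$ is the leading term of an element of $I$, hence $X^{v(r)} \in LT(I)$; but $r_{v(r)} \neq 0$ and $r \in V$ force $X^{v(r)} \notin LT(I)$, a contradiction. So $r = 0$, $q|_V$ is injective, and therefore $q|_V$ is an isomorphism of $K$-vector spaces.

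I expect the only real subtlety to be the matching of two a priori different conditions on a monomial: ``not divisible by the leading term of any element of a fixed standard basis $G$'' versus ``not lying in the monomial ideal $LT(I)$''. These agree exactly because $G$ is a standard basis, which is essentially the defining property, and the existence of such a $G$ together with the existence of standard remainders is supplied by Theorems \ref{thm18} and \ref{thm13}. The zero-dimensional hypothesis is used only to guarantee that $V$ and all the standard remainders are finite; beyond that bookkeeping, no new ingredients are needed.
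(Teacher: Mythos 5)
The paper does not prove Theorem \ref{thm1}; it recalls it as a known fact from \cite[Chapter 4.4]{CLO2}, and your argument is exactly the standard one used there. It is correct: zero-dimensionality forces $\mathfrak{m}^N \subseteq I$ so that $V$ is finite-dimensional, Theorems \ref{thm13} and \ref{thm18} furnish for each $f$ a standard remainder lying in $V$ and congruent to $f$ modulo $I$, and the leading-term observation shows $V \cap I = 0$, giving the isomorphism.
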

        For the most part, we will not be concerned with the particular admissible ordering that is used, so it what follows we fix an admissible monomial ordering.  When necessary, we will specify a particular ordering.

        \begin{rem}
            For notation convenience, define $I_n = I_{\Delta} + I_{\Gamma_n}$.
        \end{rem}
        \begin{cor} \label{cor9}
            Consider the ideal $I_n=(I_{\Delta}+I_{\Gamma_n}) \subset \widehat{R_P}$.  Then
            \begin{equation*}
                a_P(n)=\dim_K(\widehat{R_P}/I_n)= \dim_K(\Span(X^{v} \mid X^{v} \not\in
                LT(I_n))).
            \end{equation*}
        \end{cor}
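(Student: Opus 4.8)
The plan is to assemble the corollary from the identity $a_P(n) = \dim_K\!\big(R_P/(I_{\Delta}+I_{\Gamma_n})\big)$ recorded just above the statement together with Theorem \ref{thm1}. There is essentially nothing to prove beyond bookkeeping: the corollary is the translation of Theorem \ref{thm1} into the geometric setting, once one checks that the hypotheses apply.

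First I would pass from $R_P$ to its completion. Since $\phi^n$ is non-degenerate, $\Gamma_n$ and $\Delta$ meet properly at $(P,P)$, so $R_P/(I_{\Delta}+I_{\Gamma_n})$ is Artinian, i.e.\ of finite length as an $R_P$-module, being annihilated by a power of the maximal ideal $\mathfrak{m}_P$. A finite-length module is already $\mathfrak{m}_P$-adically complete, so $\widehat{R_P}\otimes_{R_P} R_P/(I_{\Delta}+I_{\Gamma_n}) \cong R_P/(I_{\Delta}+I_{\Gamma_n})$, that is $\widehat{R_P}/I_n \cong R_P/(I_{\Delta}+I_{\Gamma_n})$ as $K$-vector spaces. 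Combining this with the computation of $a_P(n)$ via $\tor_0$ (which rests on Theorem \ref{thm10}) gives $a_P(n) = \dim_K\!\big(R_P/(I_{\Delta}+I_{\Gamma_n})\big) = \dim_K(\widehat{R_P}/I_n)$, the first equality of the corollary.

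Next I would verify the hypothesis of Theorem \ref{thm1} for the ideal $I_n \subset \widehat{R_P} \cong K[[x_1,\dots,x_b,y_1,\dots,y_b]]$, i.e.\ for $h = 2b$: the quotient $\widehat{R_P}/I_n$ has finite length and hence Krull dimension $0$, so $\dim K[[x_1,\dots,y_b]]/I_n = 0$. Theorem \ref{thm1} then provides a $K$-vector space isomorphism $\widehat{R_P}/I_n \cong \Span(X^v \mid X^v \notin LT(I_n))$, and taking $\dim_K$ of both sides yields the second equality, completing the proof.

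The only point requiring any care — and it is also the only content beyond invoking the cited results — is that both the invariance of $\dim_K$ under completion and the zero-dimensionality needed to apply Theorem \ref{thm1} rely on the finite length of $R_P/(I_{\Delta}+I_{\Gamma_n})$, which is exactly the properness of the intersection $\Delta \cap \Gamma_n$ at $(P,P)$ guaranteed by the standing non-degeneracy hypothesis on $\phi^n$. Once this is noted, no genuine obstacle remains.
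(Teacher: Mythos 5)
Your proof is correct and takes essentially the same approach as the paper, which simply says ``Apply Theorem \ref{thm1} to $\widehat{R_P}$ and $I_n$.'' You have supplied the supporting details — the passage to the completion via finite length and the verification that $\dim \widehat{R_P}/I_n = 0$ — that the paper leaves implicit.
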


        \begin{proof}
            Apply Theorem \ref{thm1} to $\widehat{R_P}$ and $I_n$.
        \end{proof}

        \begin{lem} \label{lem10}
            Assume $\phi^n$ is non-degenerate.  Then $a_P(n)\geq a_P(1)$ for all $n \in \N$.
        \end{lem}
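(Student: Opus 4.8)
The plan is to compare the two quotient rings $\widehat{R_P}/I_1$ and $\widehat{R_P}/I_n$ via their leading term ideals, using Corollary \ref{cor9}. Since $a_P(m) = \dim_K(\Span(X^v \mid X^v \notin LT(I_m)))$ for $m = 1, n$, it suffices to show the containment $LT(I_n) \subseteq LT(I_1)$ of leading term ideals in $K[[x_1,\ldots,x_b,y_1,\ldots,y_b]]$; this immediately forces the complementary monomial sets to satisfy the reverse inclusion, hence the dimension inequality. So the whole lemma reduces to a statement about leading terms.

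To prove $LT(I_n) \subseteq LT(I_1)$, I would first use the $y_i$-variables to eliminate: both $I_1 = I_\Delta + I_{\Gamma_1}$ and $I_n = I_\Delta + I_{\Gamma_n}$ contain $I_\Delta = (x_1 - y_1, \ldots, x_b - y_b)$, so modulo $I_\Delta$ we may substitute $y_i = x_i$ and work in $K[[x_1,\ldots,x_b]]$, where $I_1$ becomes $J_1 = (\phi_i(\mathbf{x}) - x_i)_{1 \le i \le b}$ and $I_n$ becomes $J_n = (\phi_i^{(n)}(\mathbf{x}) - x_i)_{1 \le i \le b}$. (One should be careful to choose the monomial ordering so that this elimination is clean — e.g. an ordering in which the $y_i$ are larger than all $x$-monomials, or more simply note that $\widehat{R_P}/I_m \cong K[[\mathbf{x}]]/J_m$ and apply Corollary \ref{cor9}-type reasoning directly in the $x$-variables.) The key algebraic input is then the telescoping identity
\[
    \phi_i^{(n)}(\mathbf{x}) - x_i = \sum_{k=0}^{n-1} \bigl(\phi_i^{(k+1)}(\mathbf{x}) - \phi_i^{(k)}(\mathbf{x})\bigr) = \sum_{k=0}^{n-1} \bigl(\phi_i(\phi^{(k)}(\mathbf{x})) - \phi_i^{(k)}(\mathbf{x})\bigr),
\]
which exhibits each generator of $J_n$ as lying in the ideal generated by the $\phi_j(\mathbf{y}) - y_j$ evaluated along the orbit; more precisely, writing $\phi_j(\mathbf{y}) - y_j = g_j(\mathbf{y})$, each difference $\phi_i(\phi^{(k)}(\mathbf{x})) - \phi_i^{(k)}(\mathbf{x})$ is an $\widehat{R_P}$-linear combination of the $g_j(\phi^{(k)}(\mathbf{x}))$, and since $P$ is a fixed point these in turn lie in $J_1$ (one uses that $\phi^{(k)}$ fixes $P$, so composition preserves the maximal ideal and $g_j \circ \phi^{(k)} \in (g_1,\ldots,g_b) = J_1$). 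This gives $J_n \subseteq J_1$, whence $LT(J_n) \subseteq LT(J_1)$ and the dimension inequality $\dim_K(\widehat{R_P}/I_n) \ge \dim_K(\widehat{R_P}/I_1)$.

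The main obstacle is the step $g_j \circ \phi^{(k)} \in J_1$: it is not literally true that $g_j(\phi^{(k)}(\mathbf{x}))$ is in the ideal generated by $g_1(\mathbf{x}),\ldots,g_b(\mathbf{x})$ just because $\phi^{(k)}$ fixes $P$ — one needs that the components $\phi_\ell^{(k)}(\mathbf{x})$ themselves, when fed into $g_j$, produce something in $J_1$. The clean way to see this: $J_1$ cuts out the fixed-point scheme, and $\phi^{(k)}$ restricts to a morphism on $\Spec(\widehat{R_P}/J_1)$ (the fixed locus is $\phi$-invariant), so $g_j$ vanishes on it, i.e. $g_j(\phi^{(k)}(\mathbf{x})) \in \sqrt{J_1}$; upgrading radical membership to actual ideal membership requires either that $J_1$ be radical or a more careful direct argument. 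I would instead avoid this by arguing at the level of ideals directly: since $g_i(\mathbf{x}) \in J_1$ trivially, induct on $n$ using $\phi^{(n)} = \phi \circ \phi^{(n-1)}$ and a Taylor expansion of $\phi_i$ about the point $\phi^{(n-1)}(\mathbf{x})$, writing $\phi_i^{(n)}(\mathbf{x}) - \phi_i^{(n-1)}(\mathbf{x}) = \phi_i(\phi^{(n-1)}(\mathbf{x})) - \phi_i(\mathbf{x}) + g_i(\mathbf{x})$ and expanding the first difference as $\sum_j (\text{power series}) \cdot (\phi_j^{(n-1)}(\mathbf{x}) - x_j)$, which by the inductive hypothesis lies in $J_1$. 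This makes the containment $J_n \subseteq J_1$ a bookkeeping exercise in formal power series rather than a scheme-theoretic subtlety.
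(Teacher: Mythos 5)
Your proposal is correct and follows essentially the same route as the paper: show the ideal containment $I_n \subseteq I_1$ (equivalently $J_n \subseteq J_1$ after eliminating the $y$-variables), deduce $LT(I_n) \subseteq LT(I_1)$, and invoke Corollary~\ref{cor9}. In fact you supply a more careful justification of the key containment than the paper, which only asserts that iterating the local representation ``involves taking combination of the $\phi_i$'' — your Taylor-expansion induction on $n$ fills in exactly the bookkeeping that the paper elides (modulo a small typo: the display should read $\phi_i^{(n)}(\mathbf{x}) - x_i = \bigl(\phi_i(\phi^{(n-1)}(\mathbf{x})) - \phi_i(\mathbf{x})\bigr) + g_i(\mathbf{x})$, not $\phi_i^{(n)} - \phi_i^{(n-1)}$ on the left).
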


        \begin{proof}
            It is clear that
            \begin{equation*}
                \Gamma_1 \cap \Delta \subseteq \Gamma_n \cap \Delta
            \end{equation*}
            and we have a local representation of $\phi = (\phi_1,\ldots,\phi_n)$ at the fixed point $P$.  Iterating this representation involves taking combination of the $\phi_i$ and hence are all elements of the original ideal $I_{\Gamma_1}$.  Hence, we have
            \begin{equation*}
                I_{\Gamma_n} + I_{\Delta}=I_n \subseteq I_1 = I_{\Gamma_1} + I_{\Delta}.
            \end{equation*}
            Therefore,
            \begin{equation*}
                LT(I_n) \subseteq LT(I_1)
            \end{equation*}
            which implies $a_P(n) \geq a_P(1)$.
        \end{proof}

        For a fixed point $P$, if $a_P(n) > a_P(1)$, then some monomial of a power series representation of $\phi$ near $P$ does not appear in the $n$-th iterate of that representation.  So we need to examine when monomials may have coefficient $0$ after iteration.  We next show that we may reduce to the case where the generators of the ideal are self-reduced.

        \begin{lem} \label{lem8}
            Let $I \subset K[[X_1,\ldots,X_h]]$ be an ideal generated by
            $\{f_1,\ldots,f_m\}$ with $\dim K[[X_1,\ldots,X_h]]/I = 0$.  Let $u_i \in K[[X_1,\ldots,X_h]]$ be a unit such that $u_if_i$ is self-reduced for each $1 \leq i \leq h$ and define $uI = (u_1f_1,\ldots,u_mf_m)$. Then
            \begin{equation*}
                \dim_K(\Span(X^{v} \mid X^{v} \not\in LT(I))) =
                \dim_K(\Span(X^{v} \mid X^{v} \not\in LT(uI))).
            \end{equation*}
        \end{lem}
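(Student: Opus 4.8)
The key observation is that multiplying each generator $f_i$ by a unit $u_i$ does not change the ideal, so $uI = I$ as ideals in $K[[X_1,\ldots,X_h]]$. Indeed, since each $u_i$ is a unit, $f_i = u_i^{-1}(u_if_i) \in uI$ and $u_if_i \in I$, giving $I \subseteq uI$ and $uI \subseteq I$. Therefore $LT(I) = LT(uI)$ verbatim as leading term ideals, and the two spans on either side of the claimed equality are literally the same $K$-vector space. So the statement is immediate once one records that the leading term ideal depends only on the ideal, not on a choice of generators.

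The plan is thus: first note $uI = I$; then invoke the definition of the leading term ideal (which is phrased in terms of $\{X^v \mid \exists f \in I \text{ with } LT(f) = X^v\}$ and hence manifestly generator-independent) to get $LT(uI) = LT(I)$; and conclude that the two spanning sets $\{X^v \mid X^v \notin LT(I)\}$ and $\{X^v \mid X^v \notin LT(uI)\}$ coincide, so in particular their $K$-spans have equal dimension. The hypothesis that $\dim K[[X_1,\ldots,X_h]]/I = 0$ guarantees both dimensions are finite (via Theorem \ref{thm1}), but is not otherwise needed for the equality itself.

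One might worry that the point of the lemma is more subtle — namely that a \emph{standard basis} of $I$ consisting of self-reduced elements is useful downstream — but for the stated equation there is genuinely nothing to do beyond the generator-independence of $LT(I)$. The only mild subtlety, and the single place care is warranted, is that self-reducedness of $u_if_i$ plays no role in \emph{this} identity; it is recorded here so that subsequent arguments (which iterate power series representations and need to read off leading terms of the generators directly) may assume the generators are self-reduced without loss of generality. I would therefore state the proof in two or three lines, emphasizing $uI = I$ and the consequent equality $LT(uI) = LT(I)$, and remark that the self-reduced hypothesis is carried along for later use rather than being needed now.
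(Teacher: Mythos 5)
Your proof is correct, and it is cleaner than the paper's. The paper's proof argues at the level of individual elements, noting $LT(u_if_i)=LT(f_i)$ because each $u_i$ is a unit and then asserting the same ``for any combinations,'' from which it concludes $LT(I)=LT(uI)$. You instead observe directly that $uI=I$ as ideals (since $u_if_i\in I$ and $f_i=u_i^{-1}(u_if_i)\in uI$), and since the leading term ideal $LT(\cdot)$ is defined intrinsically in terms of the ideal rather than any generating set, the equality $LT(uI)=LT(I)$ is immediate with no need to track what happens to leading monomials of combinations. Both arguments rest on the same fact (units have order zero), but your reduction to the ideal equality sidesteps the slightly hand-wavy ``and similarly for combinations'' step in the paper and makes the lemma genuinely a one-liner. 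Your closing remark is also on target: the self-reducedness hypothesis is not used in the proof of this equality and is carried along purely for downstream convenience, and the zero-dimensionality hypothesis serves only to make the stated dimensions finite via Theorem~\ref{thm1}.
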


        \begin{rem}
            By \cite[Corollary 2.2]{Becker} applied to
            \begin{equation*}
                LT(I) = LT((f_1,\ldots,f_m)),
            \end{equation*}
            we know each there exist units $u_i \in K[[X_1,\ldots,X_h]]$ such that each $u_if_i$ is self-reduced.
        \end{rem}

        \begin{proof}
             Since each $u_i$ is a unit, we have $v(LT(u_i))=0$ and $LT(u_if_i) = LT(f_i)$ (and similarly for any combinations of the $f_i$).  Hence we have
            \begin{equation*}
                LT(I)=LT((f_1,\ldots,f_m)) =LT((u_1f_1,\ldots,u_mf_m)).
            \end{equation*}
        \end{proof}

        We now show that we can also exclude from consideration those monomials that are products of other monomials in the monomial support of $\phi$ under iteration.
        \begin{exmp} \label{exmp1}
            We have
            \begin{align*}
                \phi_1(x,y,z) &= x^4 + \fbox{$x^2z^2$} + xy\\
                \phi_2(x,y,z) &= y^4 + xz^2\\
                \phi_3(x,y,z) &= z^4.
            \end{align*}
            We have that $xy$ creates an additional $x^2z^2$ term under iteration.
        \end{exmp}
        \begin{lem} \label{lem1}
            Assume that $H \in \supp(\phi_i)$ is a monomial which is a product of other monomials under iteration.  Then $LT(\phi_1, \ldots,\phi_d) = LT(\phi_1,\ldots,\phi_i-H,\ldots,\phi_d)$.
        \end{lem}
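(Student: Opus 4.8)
The plan is to reduce the claim about leading term ideals to a statement about the ideals themselves. Since $LT(I)$ depends only on the ideal $I$ and not on any choice of generators (it can be read off from any standard basis, cf.\ Theorem~\ref{thm13}), it suffices to show that
\begin{equation*}
    J := (\phi_1,\ldots,\phi_i,\ldots,\phi_d) = (\phi_1,\ldots,\phi_i - H,\ldots,\phi_d) =: J';
\end{equation*}
then $LT(J) = LT(J')$ is immediate. Because $J = J' + (H)$ we automatically have $J \supseteq J'$, so the whole content is the single membership $H \in J'$.

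To establish $H \in J'$ I would work inside the complete local ring $K[[X_1,\ldots,X_h]]$ and use that $J'$ is closed in the $\mathfrak{m}$-adic topology, i.e.\ $J' = \bigcap_{k \geq 1}(J' + \mathfrak{m}^k)$ by Krull's intersection theorem. It therefore suffices to produce, for every $k$, a congruence $H \equiv (\text{an element of }\mathfrak{m}^k) \pmod{J'}$. This is precisely where the hypothesis is used: to say that $H \in \supp(\phi_i)$ is a product of other monomials under iteration means that, substituting the relations encoded by the generators of $J'$ (the components $\phi_j$ for $j \neq i$ together with $\phi_i - H$), the monomial $H$ can be rewritten modulo $J'$ as a product of monomials coming from those other components, and each such substitution strictly raises the $\mathfrak{m}$-adic order, since a monomial appearing in the lowest-degree part of a component trades, modulo $J'$, for the strictly higher-order tail of that component. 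Iterating the substitution $k$ times places $H$ in $J' + \mathfrak{m}^k$; letting $k \to \infty$ yields $H \in J'$, hence $J = J'$ and $LT(J) = LT(J')$.

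The technical heart, and the step I expect to be the main obstacle, is verifying that deleting the single monomial $H$ from $\phi_i$ does not break the chain of substitutions: the monomials used to express $H$ as a product are, by hypothesis, all different from $H$, so they survive the deletion, but one must still check that the chain can be continued indefinitely and that every link strictly increases order. Example~\ref{exmp1} is the guiding case: there $H = x^2z^2$, and one writes $x^2z^2 = x\cdot(xz^2)$, uses $y^4 + xz^2 \in J'$ to replace $xz^2$ by $-y^4$, then uses $x^4 + xy \in J'$ to replace $xy$ by $-x^4$, and so on, pushing $x^2z^2$ into $\mathfrak{m}^k$ modulo $J'$ for $k$ as large as desired; the boxed term is thus redundant and $LT(\phi_1,\phi_2,\phi_3) = LT(\phi_1 - x^2z^2,\phi_2,\phi_3)$.
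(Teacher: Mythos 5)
Your approach is genuinely different from the paper's. The paper argues indirectly, never trying to show the two ideals coincide: it supposes $H$ has dropped out of $\supp(\phi_i^{(n)})$ for some $n$ (and that nothing else has dropped out), observes that since $H$ is a product of other least monomials it must reappear in $\supp(\phi_i^{(m)})$ at some later iterate $m>n$, and then invokes the monotonicity $a_P(\cdot)\geq a_P(1)$ from Lemma~\ref{lem10}: if the vanishing of $H$'s coefficient had enlarged the complement of the leading term ideal and pushed $a_P(n)$ above $a_P(1)$, the reappearance at $m$ would force $a_P(m)<a_P(n)$, a contradiction. The whole argument lives at the level of the multiplicities $a_P$ and never asserts $J=J'$.

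Your plan instead aims for the stronger fact $J=J'$ by showing $H\in J'$ via Krull's intersection theorem, and this is where the gaps are. First, a small but real slip: the relation you have for free is $J+(H)=J'+(H)$, not $J=J'+(H)$; the latter is equivalent to $H\in J$, which is part of what you would need to prove, not a starting point. (The slip is patchable: the substitution chain you envision never uses the monomial $H$ itself, so whatever proves $H\in J'$ should also prove $H\in J$, and together these give $J=J'$; but as written the reduction is circular.) Second, and more seriously, the heart of your argument is the claim that each rewriting step strictly raises the $\mathfrak m$-adic order, justified by the phrase ``a monomial appearing in the lowest-degree part of a component trades for the strictly higher-order tail.'' The hypothesis of the lemma does not say that the monomials in the factorization of $H$ are the lowest-degree terms of their respective components; ``$H$ is a product of other monomials under iteration'' is a statement about what happens when you substitute the $\phi_j$ into $\phi_i$, not a statement that those factors are $\mathfrak m$-adic leading terms. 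Without that, a substitution could lower or preserve the order, and the chain need not escape to $\mathfrak m^k$. You flag this yourself as ``the main obstacle,'' and indeed it is exactly the content: as things stand, the example is an illustration, not a proof. So this is a legitimate and interesting alternative strategy, but it is incomplete at precisely the step the paper's monotonicity argument is designed to sidestep.
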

        \begin{proof}
            Assume that $H \in \supp(\phi_i)$ and $H \not\in \supp(\phi^{(n)}_i)$ for some $n$ and that there is no other monomial in $\supp(\phi)$ that not in $\supp(\phi^{(n)})$.  Then notice that for some $m > n$ we will also have $H \in \supp(\phi_i^{(m)})$ since $H$ is the product of other least monomials under iteration.  If it were true that $a_P(n) >a_p(1)$, then it would also be true that $a_P(m) < a_P(n)$ which contradicts Lemma \ref{lem10}.
        \end{proof}

        \begin{defn}
            Let $f \in K[[X_1,\ldots,X_h]]$ be a non-zero element.  Then we call $F
            \in \supp(f)$ a \emph{least monomial of $f$} if
            \begin{equation*}
                \{X^v \mid X^v \in \supp(f), v \neq 0 \text{, and } X^v \text{ divides } F \} = \{F\}
            \end{equation*}
            and $F$ is not the product of other least monomials in the support of $\phi$ under iteration.
        \end{defn}

        It is clear that one of the monomials in the monomial support of $\phi^n$ can be $0$
        in $K$ when the coefficient $\lambda_i$ of the linear term $x_i$ in $\supp(\phi_i)$ satisfies $\lambda_i \neq 1$ and $\lambda_i^n=1$ for some $1 \leq i \leq b$.  It is also possible to have a coefficient of $0$ after iteration when $\text{char } K = p$ and $n=Mp^e$ for some $e \geq 1$.
        Lemma \ref{lem11} gives general conditions for when the coefficient of a least monomial
        is divisible by $p$ after iteration.

        Denote $d\phi_P$ as the map induced by $\phi$ on the cotangent
        space of $X$ at $P$.  Recall that we are assuming that $P$ is a fixed point of $\phi$ and that $K$ is algebraically closed.  Therefore, $d\phi_P$ is a $b \times b$ matrix and can always be put in Jordan-canonical form, with Jordan blocks $J_1,\ldots,J_k$ of the form
        \begin{equation*}
            J_i = \begin{pmatrix}
                \lambda_i & 1 & 0 & 0 \\
                0 & \lambda_i & 1 & 0 \\
                \vdots & \ddots & \ddots & \vdots \\
                0 & \cdots & 0 & \lambda_i
                \end{pmatrix}.
        \end{equation*}
        Having a non-trivial Jordan block causes more complicated interaction between the
        $\phi_i$ through the additional linear terms.
        Consider as an example $J_1$ of size $\mathfrak{v}$.  We have
        \begin{align*}
            \phi_1 &= \lambda_1 x_1 + x_2 + \text{higher order terms} \\
            \phi_2 &= \lambda_1 x_2 + x_3 + \text{higher order terms} \\
            \vdots & = \vdots \\
            \phi_{\mathfrak{v}} &= \lambda_1 x_{\mathfrak{v}} + \text{higher order terms}.
        \end{align*}
        Along with the linear terms $\lambda_1x_1, \ldots, \lambda_1x_{\mathfrak{v}}$, we also have
        the linear terms $x_2,\ldots, x_{\mathfrak{v}}$.  Notice that for $F$, a least monomial
        in the monomial support of $\phi_i$ in a non-trivial Jordan block of $d\phi_P$, it may be that $F \in \supp(\phi_j)$
        for some other $\phi_j$ in the same Jordan block of $d\phi_P$.    We will be concerned
        with $F \not\in \supp(\phi^{(n)}_j)$ for every $\phi_j$ in the same Jordan block of $d\phi_P$.
        First we describe the coefficients of a least monomial under iteration.

        \begin{lem} \label{lem19}
            Let $F = \prod x_i^{e_i}$ and assume the monomial support of $\phi_{i_t}$
            with $1 \leq i_t \leq b$
            contains a least monomial that is a constant multiple of $F$.
            Assume that $\phi_{i_t}$ is in a Jordan block of $d\phi_P$ of size $\mathfrak{v} \geq 1$ with eigenvalue $\lambda$.  Label
            the rows $1, \ldots, \mathfrak{v}$ corresponding to $\phi_{i_1},\ldots,\phi_{i_{\mathfrak{v}}}$.
            Assume that the last row containing a constant multiple of $F$ is $\phi_{i_s}$ and label
            the initial coefficients of the $F$ as $c_1, \ldots, c_s$ with at least
            $c_s \neq 0$. Let $\alpha = \sum_{j=1}^{\mathfrak{v}} e_{i_j}$.
            Then we have that the coefficient of $F$ in $\phi_{i_t}^{(n)}$
            is determined as follows:
            \begin{enumerate}
                \item If $\deg{F} = 1$, then we have
                    \begin{equation*}
                        \begin{cases}
                            0 & \text{if } n < s-t,\\
                            \binom{n}{s-t} \lambda^{n-(s-t)} &
                            \text{otherwise}.
                        \end{cases}
                    \end{equation*}

                \item If $\deg{F} >1$, then
                    \begin{equation*}
                        \sum_{\ell=0}^{s-t}\left(\sum_{j=0}^{n-1-\ell} \binom{n-1-j}{\ell}
                        \lambda^{j(\alpha-1)+n-1-\ell}
                        \mathop{\prod_{x_i \mid F}}_{i \not\in \{i_1,\ldots,i_{\mathfrak{v}}\}}
                        \lambda_i^{e_i j}\right)c_{t+\ell}.
                    \end{equation*}
            \end{enumerate}
        \end{lem}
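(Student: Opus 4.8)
The plan is to prove both cases simultaneously by induction on $n$, iterating the local power series representation of $\phi$ at the fixed point $P$ via $\phi^{(n)}=\phi\circ\phi^{(n-1)}$ and tracking how the coefficient of $F$ propagates. Write $A_m^{(n)}$ for the coefficient of $F$ in $\phi_{i_m}^{(n)}$ for $1\le m\le\mathfrak{v}$ (so that we want $A_t^{(n)}$), with the convention $A_{\mathfrak{v}+1}^{(n)}=0$; by hypothesis $A_m^{(1)}=c_m$, and $c_m=0$ for $m>s$. When $\deg F=1$ the monomial $F$ lies in the linear part of $\phi$, so $A_t^{(n)}$ is simply an entry of $(d\phi_P)^n$: for the Jordan block $J$ of eigenvalue $\lambda$ one has $(J^n)_{t,m}=\binom{n}{m-t}\lambda^{\,n-(m-t)}$ for $m\ge t$ and $0$ otherwise, and identifying $s$ as the largest block index whose linear part contains $F$ gives $A_t^{(n)}=(J^n)_{t,s}$, which is exactly the stated expression (and vanishes when $n<s-t$). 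Thus the substance is case (2), $\deg F>1$.

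For the inductive step in case (2) I would expand $\phi_{i_m}(\phi^{(n-1)})$ over the monomials of $\phi_{i_m}$. The linear terms $\lambda x_{i_m}$ and (for $m<\mathfrak{v}$) $x_{i_{m+1}}$ contribute $\lambda A_m^{(n-1)}+A_{m+1}^{(n-1)}$. The term $c_m F=c_m\prod_i x_i^{e_i}$ contributes $c_m$ times the coefficient of $F$ in $\prod_i(\phi_i^{(n-1)})^{e_i}$; this is where the least-monomial hypothesis, refined by Lemma \ref{lem1}, enters: because $F$ has minimal total degree $\sum_i e_i=\deg F$ among the monomials occurring in that product, only the linear parts of the $\phi_i^{(n-1)}$ contribute, and an equal-multiset argument for the block variables — each of the $e_{i_m}$ copies of the linear form coming from $\phi_{i_m}^{(n-1)}$ must supply its own variable $x_{i_m}$, with coefficient the diagonal entry $\lambda^{n-1}$ of $J^{n-1}$, while each off-block variable $x_i\mid F$ contributes a factor $\lambda_i^{n-1}$ — evaluates that coefficient as $\lambda^{(n-1)\alpha}\prod_{x_i\mid F,\,i\notin\{i_1,\ldots,i_{\mathfrak{v}}\}}\lambda_i^{(n-1)e_i}$. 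The remaining higher-order terms of $\phi_{i_m}$ contribute nothing: any such $G\neq F$ in $\supp(\phi_{i_m})$ either has $\deg G>\deg F$, so $G(\phi^{(n-1)})$ has no terms of degree $\le\deg F$, or is excluded by the leastness of $F$ together with Lemma \ref{lem1}. This yields the recursion
\begin{equation*}
A_m^{(n)}=\lambda A_m^{(n-1)}+A_{m+1}^{(n-1)}+c_m\,\lambda^{(n-1)\alpha}\mathop{\prod_{x_i\mid F}}_{i\notin\{i_1,\ldots,i_{\mathfrak{v}}\}}\lambda_i^{(n-1)e_i}.
\end{equation*}

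Finally I would solve this linear recursion and match the closed form. Let $T$ be the operator $(TB)_m=\lambda B_m+B_{m+1}$; then $T=\lambda\,\mathrm{Id}+S$ with $S$ the shift, so $(T^{p}B)_m=\sum_{\ell\ge0}\binom{p}{\ell}\lambda^{p-\ell}B_{m+\ell}$, and with $g_m^{[k]}=c_m\lambda^{(k-1)\alpha}\prod_{x_i\mid F,\,i\notin\{i_1,\ldots,i_{\mathfrak{v}}\}}\lambda_i^{(k-1)e_i}$ the recursion unrolls to $A^{(n)}=T^{n-1}A^{(1)}+\sum_{k=2}^{n}T^{\,n-k}g^{[k]}$. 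Evaluating at $m=t$, using $A_{t+\ell}^{(1)}=c_{t+\ell}$ (zero for $t+\ell>s$), and re-indexing the outer sum by $j=k-1$ — with the $T^{n-1}A^{(1)}$ term appearing as the $j=0$ term — gives precisely $\sum_{\ell=0}^{s-t}\bigl(\sum_{j=0}^{n-1-\ell}\binom{n-1-j}{\ell}\lambda^{j(\alpha-1)+n-1-\ell}\prod_{x_i\mid F,\,i\notin\{i_1,\ldots,i_{\mathfrak{v}}\}}\lambda_i^{e_ij}\bigr)c_{t+\ell}$, the upper bound $n-1-\ell$ on $j$ being automatic since $\binom{n-1-j}{\ell}=0$ once $j>n-1-\ell$. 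I expect the main obstacle to be the bookkeeping in the inductive step: proving rigorously that no spurious monomial contributes to the coefficient of $F$ after iteration — this is exactly where one must combine the least-monomial hypothesis with Lemma \ref{lem1}, handle the non-trivial Jordan blocks (where the superdiagonal couples the $\phi_{i_m}$ within a block), and track leading terms carefully; once the displayed recursion is in hand, the remainder is a routine manipulation of binomial sums.
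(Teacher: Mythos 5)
Your proposal is correct and follows essentially the same route as the paper: an induction on $n$ via $\phi^{(n)}=\phi\circ\phi^{(n-1)}$, using the least-monomial hypothesis and a degree argument to see that only the diagonal/superdiagonal linear terms and the single monomial $c_mF$ of $\phi_{i_m}$ can feed into the coefficient of $F$, and the multiset argument to evaluate the coefficient of $F$ in $F(\phi^{(n-1)})$ as $\lambda^{(n-1)\alpha}\prod_{x_i\mid F,\,i\notin\{i_1,\ldots,i_{\mathfrak v}\}}\lambda_i^{(n-1)e_i}$. The recursion you write down, $A_m^{(n)}=\lambda A_m^{(n-1)}+A_{m+1}^{(n-1)}+c_m\lambda^{(n-1)\alpha}\prod\lambda_i^{(n-1)e_i}$, is exactly what the paper's inductive step encodes case by case; the paper then verifies the Pascal-type binomial identities directly at each step, whereas you abstract the same computation into the operator $T=\lambda\,\mathrm{Id}+S$, invoke the binomial theorem once for $T^p$, and unroll the recursion. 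Your treatment of $\deg F=1$ as the $(t,s)$ entry of $J^n$ is a cleaner way to say what the paper proves by hand, but it is the same fact. The one place both you and the paper are somewhat informal is in asserting that no other $G\in\supp(\phi_{i_m})$ with $\deg G=\deg F$ can contribute to $F$ after substitution; this requires the monotone (up-shift) structure of the linear parts of $\phi^{(n-1)}$ in a Jordan block together with the leastness hypothesis and Lemma~\ref{lem1}, which you gesture at but do not spell out — the paper is equally terse here, so this is not a gap relative to the source.
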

        \begin{proof}
            We will prove both statements by induction.

            \begin{enumerate}
                \item  For the base case of $n=1$, we have from the formula
                    \begin{equation*}
                        \begin{cases}
                        0 & \text{if } t < s-1, \\
                        1 & \text{if } t=s-1, \\
                        \lambda & \text{if } t=s,
                        \end{cases}
                    \end{equation*}
                    which corresponds to the linear terms of a
                    Jordan block.  We will assume now that the formula holds for the
                    $n$-th iterate and consider the $(n+1)$-st iterate.

\setcounter{case}{0}
                 \begin{case}   The Jordan block is size 1 or $t=s$.

                    In this case, the contribution to $F$ in $\supp(\phi_{i_s})$ through iteration  is given by
                    \begin{equation*}
                        \lambda x_s.
                    \end{equation*}
                    Hence, the coefficient of $F$ in $\supp(\phi_{i_s}^{(n+1)})$ is given by
                    \begin{equation*}
                        \lambda(\lambda^n x_s) = \lambda^{n+1}x_s,
                    \end{equation*}
                    confirming the formula.
                 \end{case}
                 \begin{case} The Jordan block is non-trivial and $t \neq s$.

                    In this case, the contribution to $F$ in $\supp(\phi_{i_t})$ through iteration is given by
                    \begin{equation*}
                        \lambda x_t + x_{t+1},
                    \end{equation*}
                    and hence the coefficient of $F$ in $\supp(\phi_{i_t}^{(n+1)})$ is given by
                    \begin{align*}
                        &\lambda\left(\binom{n}{s-t}\lambda^{n-(s-t)}\right) + \binom{n}{s-(t+1)}
                        \lambda^{n-(s-(t+1))} \\
                        &= \lambda^{n-(s-t)+1}\left(\binom{n}{s-t} +\binom{n}{s-t-1}\right) \\
                        &=\lambda^{n-(s-t)+1}\left(\frac{n(n-1)\cdots(s-t+1)}{(n-(s-t))!}
                        + \frac{n(n-1)\cdots(s-t+1)(s-t)}{(n-(s-t)+1)!}\right)\\
                        &=\lambda^{n-(s-t)+1}\left(\frac{n(n-1)\cdots(s-t+1)(n+1-(s-t))}{(n-(s-t))!(n+1-(s-t))}
                        + \frac{n(n-1)\cdots(s-t+1)(s-t)}{(n-(s-t)+1)!}\right)\\
                        &=\lambda^{n-(s-t)+1}\left(\frac{(n+1)n(n-1)\cdots(s-t+1)}{(n-(s-t)+1)!}\right)\\
                        &=\lambda^{n+1-(s-t)}\binom{n+1}{s-t},
                    \end{align*}
                    confirming the formula.
                 \end{case}

                \item
                    For the base case of $n=1$, the sum over $j$ has a term only when $\ell=0$ and,
                    in that case, we have $\ell=j=0$.  So the coefficient of
                    $c_{t+\ell}$ in $\phi_{i_t}$ is
                    \begin{equation*}
                        \begin{cases}
                        0 & \text{if } \ell \neq 0,\\
                        1=\lambda^{0}\mathop{\prod_{x_i \mid F}}_{i \not\in \{i_1,\ldots,i_{\mathfrak{v}}\}}
                        \lambda_i^{0} & \text{if } \ell = 0.
                        \end{cases}
                    \end{equation*}
                    We will assume now that the formula holds for the $n$-th iterate and consider
                    the $(n+1)$-st iterate.

\setcounter{case}{0}
                    \begin{case} The Jordan block is size 1 or $t=s$ (in other words, $\ell=0$).

                    In this case, the contribution to $F$ in $\supp(\phi_{i_s})$ through iteration is given by
                    \begin{equation*}
                        \lambda x_s + c_sF,
                    \end{equation*}
                    and, hence, the coefficient in $F$ in $\supp(\phi_{i_s}^{(n+1)})$ is given by
                    \begin{equation}\label{eq51}
                        \lambda\left(\sum_{j=0}^{n-1}
                        \lambda^{j(\alpha-1)+n-1}
                        \mathop{\prod_{x_i \mid F}}_{i \not\in \{i_1,\ldots,i_{\mathfrak{v}}\}}
                        \lambda_i^{e_i j}\right)
                        +(\lambda^{n})^{\alpha}
                        \mathop{\prod_{x_i \mid F}}_{i \not\in \{i_1,\ldots,i_{\mathfrak{v}}\}}
                        (\lambda_i^{n})^{e_i}.
                    \end{equation}
                    Notice that the first term of (\ref{eq51}) and the desired term of
                    \begin{equation}\label{eq52}
                        \sum_{j=0}^{n}
                        \lambda^{j(\alpha-1)+n}
                        \mathop{\prod_{x_i \mid F}}_{i \not\in \{i_1,\ldots,i_{\mathfrak{v}}\}}
                        \lambda_i^{e_i j}
                    \end{equation}
                    differ by exactly the $j=n$ term of (\ref{eq52}), which is exactly
                    the second term of (\ref{eq51}), confirming the formula.
                    \end{case}
                    \begin{case} The Jordan block is non-trivial and $t \neq s$.

                    In this case, the contribution to $F$ in $\supp(\phi_{i_t})$ through iteration is given by
                    \begin{equation} \label{eq1}
                        \lambda x_t + x_{t+1} + c_tF.
                    \end{equation}
\setcounter{subcase}{0}
                    \begin{subcase} $\ell=0$.

                    Since there is no contribution to $c_{t}$ from $x_{t+1}$ in (\ref{eq1}), the contribution
                    of $c_{t}$ is given by
                    \begin{equation} \label{eq53}
                        \lambda\left(\sum_{j=0}^{n-1}
                        \lambda^{j(\alpha-1)+n-1}
                        \mathop{\prod_{x_i \mid F}}_{i \not\in \{i_1,\ldots,i_{\mathfrak{v}}\}}
                        \lambda_i^{e_i j}\right)
                        +(\lambda^{n})^{\alpha}
                        \mathop{\prod_{x_i \mid F}}_{i \not\in \{i_1,\ldots,i_{\mathfrak{v}}\}}
                        (\lambda_i^{n})^{e_i}.
                    \end{equation}
                    Notice that the first term of (\ref{eq53}) and the desired term of
                    \begin{equation}\label{eq54}
                        \sum_{j=0}^{n}
                        \lambda^{j(\alpha-1)+n}
                        \mathop{\prod_{x_i \mid F}}_{i \not\in \{i_1,\ldots,i_{\mathfrak{v}}\}}
                        \lambda_i^{e_i j}
                    \end{equation}
                    differ by exactly the $j=n$ term of (\ref{eq54}), which is exactly
                    the second term of (\ref{eq53}), confirming the formula.
                    \end{subcase}
                    \begin{subcase} $\ell \neq 0$.

                    Since there is no contribution to $c_{t+\ell}$ from $c_tF$ in (\ref{eq1}) for
                    $\ell \neq 0$, the contribution of each $c_{t+\ell}$ is given by
                    \begin{align}
                        &\lambda\left(\sum_{j=0}^{n-1-\ell} \binom{n-1-j}{\ell}
                        \lambda^{j(\alpha-1)+n-1-\ell}
                        \mathop{\prod_{x_i \mid F}}_{i \not\in \{i_1,\ldots,i_{\mathfrak{v}}\}}
                        \lambda_i^{e_i j}\right) \label{eq55} \\
                        &+\sum_{j=0}^{n-1-\ell+1} \binom{n-1-j}{\ell-1}
                        \lambda^{j(\alpha-1)+n-1-\ell+1}
                        \mathop{\prod_{x_i \mid F}}_{i \not\in \{i_1,\ldots,i_{\mathfrak{v}}\}}
                        \lambda_i^{e_i j}.\label{eq56}
                    \end{align}
                    For $j = n-\ell$ there is no contribution from (\ref{eq55}),
                    so we have to check that
                    \begin{equation*}
                        \binom{n-j}{\ell} = \binom{n-1-j}{\ell-1}.
                    \end{equation*}
                    Computing, we get
                    \begin{equation*}
                        \binom{\ell}{\ell} = \binom{\ell-1}{\ell-1}
                    \end{equation*}
                    with equality since $\ell$ is at least 1.

                    We have left to check that for $j = 0, \ldots, n-\ell-1$ we have
                    \begin{equation*}
                        \binom{n-j}{\ell} = \binom{n-1-j}{\ell} + \binom{n-1-j}{\ell-1}.
                    \end{equation*}
                    Computing the right-hand side, we have
                    \begin{align*}
                        &\frac{(n-1-j)\cdots(\ell+1)}{(n-j-\ell-1)!} +
                        \frac{(n-1-j)\cdots(\ell)}{(n-j-\ell)!}\\
                        &=\frac{(n-1-j)\cdots(\ell+1)(n-j-\ell)}{(n-j-\ell)!} +
                        \frac{(n-1-j)\cdots(\ell+1)(\ell)}{(n-j-\ell)!}\\
                        &=\frac{(n-1-j)\cdots(\ell+1)(n-j-\ell+\ell)}{(n-j-\ell)!}\\
                        &=\frac{(n-j)(n-j-1)\cdots(\ell+1)}{(n-j-\ell)!}\\
                        &=\binom{n-j}{\ell},
                    \end{align*}
                    confirming the formula.
                \end{subcase}
                \end{case}
            \end{enumerate}
        \end{proof}

        \begin{rem}
            If $F \in \supp(\phi_i)$ with $\lambda_i=0$ then we know that $F$ does not effect $LT(I_1)$ since $x_i$ either divides $LT(f)$ or is relatively prime to $LT(f)$ for all $f \in I_1$.  In the former, case we take the normal form of $f$ with respect to the known leading terms.  In the latter case, we see that every term in the local analogue of the $S$-polynomials is divisible by the known leading terms and hence is already in the leading term ideal.

            If $F \in \supp(\phi_j)$ with $x_i \mid F$ and $\lambda_i=0$ then we know that $F$ does not effect $LT(I_1)$ since $x_i \in LT(I_n)$ for all $n$.

            So we will exclude from consideration the Jordan block(s) with eigenvalue $0$ and monomials divisible by $x_i$ with $\lambda_i=0$.
        \end{rem}

        \begin{lem} \label{lem11}
            Let $F = \prod x_i^{e_i}$ with $\deg{F} > 1$ and assume that
            $\supp(\phi_{i_t})$ with $1 \leq i_t \leq b$ contains a least monomial that is a constant multiple of $F$.  Assume that $\phi_{i_t}$ is in a Jordan block of $d\phi_P$ of size $\mathfrak{v} \geq 1$ with eigenvalue $\lambda \neq 0$.  Label the rows $1, \ldots, \mathfrak{v}$ corresponding to $\phi_{i_1},\ldots,\phi_{i_{\mathfrak{v}}}$. Let $\alpha = \sum_{j=1}^{\mathfrak{v}} e_{i_j}$.
            The following are conditions for all of the coefficients of $F$ in the Jordan block of $d\phi_P$ containing $F$ to be divisible by $p=\Char(K)$.

            \begin{enumerate}
                \item \label{lem11a} If $\lambda = 1$ and $\lambda_i=1$ for all $i$ such that $x_i \mid F$
                    and $i \not\in \{i_1,\ldots, i_{\mathfrak{v}}\}$, then it is necessary that $p \mid n$ and sufficient that $n=Mp^{e}$ for $e$ large enough and $(M,p)=1$.

                \item \label{lem11b} Assume $\lambda \neq 1$ and $\lambda_i = 1$ for all $i$
                    such that $x_i \mid F$ and $i \not\in \{i_1,\ldots, i_{\mathfrak{v}}\}$.

                    \begin{enumerate}
                        \item \label{lem11b1} If $\alpha =0$, then it is necessary that $\lambda$
                            is an $r$-th root of unity for some $r \mid n$ and sufficient that $n=Mrp^e$ for $e$ large enough.

                        \item \label{lem11b2} If $\alpha >0$, then it is necessary that
                            $\lambda^{\alpha-1}$ is an $r$-th root of unity for some $r \mid n$.
                    \end{enumerate}

                \item \label{lem11c} If $\lambda = 1$ and $\lambda_i \neq 1$ for at least one $i$
                    such that $x_i \mid F$, then it is necessary that
                    \begin{equation*}
                        \mathop{\prod_{x_i \mid F}}_{i \not\in \{i_1,\ldots,i_{\mathfrak{v}}\}}
                        \lambda_i^{e_i}
                    \end{equation*}
                    is an $r$-th root of unity for some $r \mid n$.

                \item \label{lem11d} If $\lambda \neq 1$ and $\lambda_i \neq 1$ for at least one $i$ such that $x_i \mid F$ and $i \not\in \{i_1,\ldots, i_{\mathfrak{v}}\}$, then it is necessary that
                    \begin{equation*}
                        \lambda^{\alpha-1}
                        \mathop{\prod_{x_i \mid F}}_{i \not\in \{i_1,\ldots,i_{\mathfrak{v}}\}}
                        \lambda_i^{e_i}
                    \end{equation*}
                    is an $r$-th root of unity with $r \mid n$.
            \end{enumerate}
           \end{lem}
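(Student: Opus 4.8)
Below is the plan I would follow.

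The plan is to extract the coefficient of $F$ in $\phi_{i_t}^{(n)}$ from Lemma~\ref{lem19}(2) and decide when it is zero in $K$, working in the notation of that lemma (rows $1,\dots,\mathfrak v$ of the Jordan block containing $\phi_{i_t}$, with $\phi_{i_s}$ the last row whose support contains a constant multiple of $F$ and $c_1,\dots,c_s$ the corresponding initial coefficients, $c_s\neq 0$). Set
\[
    \beta=\prod_{\substack{x_i\mid F\\ i\notin\{i_1,\dots,i_{\mathfrak v}\}}}\lambda_i^{e_i},\qquad \gamma=\lambda^{\alpha-1}\beta .
\]
By the Remark preceding the statement we may assume no $x_i$ with $\lambda_i=0$ divides $F$; together with $\lambda\neq0$ this gives $\gamma\neq0$, so $\lambda^{\alpha-1}$ is meaningful even when $\alpha=0$. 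Since $\lambda^{j(\alpha-1)}\prod\lambda_i^{e_ij}=\gamma^{j}$, the formula of Lemma~\ref{lem19}(2) reads
\[
    \bigl(\text{coeff.\ of }F\text{ in }\phi_{i_t}^{(n)}\bigr)=\sum_{\ell=0}^{s-t}\lambda^{n-1-\ell}\,S_\ell(n)\,c_{t+\ell},\qquad S_\ell(n):=\sum_{j=0}^{n-1-\ell}\binom{n-1-j}{\ell}\gamma^{j}.
\]
Re-indexing by $m=n-1-j$ identifies $S_\ell(n)=\gamma^{n-1-\ell}\bigl(D^{(\ell)}h_n\bigr)(\gamma^{-1})$, where $h_n(x)=1+x+\dots+x^{n-1}=\frac{x^{n}-1}{x-1}$ and $D^{(\ell)}$ is the $\ell$-th Hasse derivative, so $D^{(\ell)}h_n=\sum_m\binom{m}{\ell}x^{m-\ell}$ has integer coefficients. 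This closed form drives everything below.

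For every necessity statement I would look only at the bottom row $t=s$: there only the $\ell=0$ term survives, and the coefficient is $\lambda^{n-1}c_s\sum_{j=0}^{n-1}\gamma^{j}$. Since $\lambda\neq0$ and $c_s\neq0$, this is $0$ in $K$ exactly when $\sum_{j=0}^{n-1}\gamma^{j}=0$, which forces $p\mid n$ when $\gamma=1$ and forces $\gamma^{n}=1$ — equivalently $\gamma$ is a root of unity of some order $r\mid n$, and then automatically $(r,p)=1$ — when $\gamma\neq1$. Substituting the relevant $\gamma$ gives precisely items (\ref{lem11a})--(\ref{lem11d}): $\gamma=1$ in (\ref{lem11a}); $\gamma=\lambda^{-1}$ in (\ref{lem11b1}); $\gamma=\lambda^{\alpha-1}$ in (\ref{lem11b2}); $\gamma=\beta$ in (\ref{lem11c}); and $\gamma=\lambda^{\alpha-1}\beta$ in (\ref{lem11d}). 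In the sub-cases where this $\gamma$ incidentally equals $1$ (for instance $\alpha=1$ in (\ref{lem11b2}), where the sharp condition is $p\mid n$) the asserted root-of-unity condition holds vacuously with $r=1$, so it is still a correct necessary condition.

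For the two sufficiency statements I need the displayed coefficient to vanish in every row $t$; the rows $t>s$ give an empty sum, so it is enough that $S_\ell(n)=0$ in $K$ for all $0\le\ell\le\mathfrak v-1\le b-1$. In case (\ref{lem11a}), $\gamma=1$ and the hockey-stick identity gives $S_\ell(n)=\binom{n}{\ell+1}$; in case (\ref{lem11b1}), $\gamma=\lambda^{-1}$ with $\lambda$ of order $r>1$, so $\gamma^{-1}=\lambda$ and $S_\ell(n)$ is a nonzero scalar times $\bigl(D^{(\ell)}h_n\bigr)(\lambda)$. Now take $n=Mp^{e}$ with $(M,p)=1$, respectively $n=Mrp^{e}$, in both cases with $p^{e}>b$ (this is what ``$e$ large enough'' provides, uniformly over the finitely many blocks and least monomials). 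Writing $n=n_0p^{e'}$ with $p\nmid n_0$ we get $e'\ge e$, and $r\mid n_0$ in the second case; in characteristic $p$, $x^{n}-1=(x^{n_0}-1)^{p^{e'}}$ with $x^{n_0}-1$ separable, and $1^{n_0}=1$ (resp.\ $\lambda^{n_0}=1$), so $x=1$ (resp.\ $x=\lambda$) is a simple root of $x^{n_0}-1$, whence $x^{n}-1$ has a zero of order $p^{e'}\ge p^{e}$ there. Dividing by $x-1$, which is nonzero at $\lambda$ and vanishes to order $1$ at $1$, $h_n$ has a zero of order $\ge p^{e}$ at $\lambda$, respectively $\ge p^{e}-1$ at $1$. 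The Leibniz rule $D^{(\ell)}\bigl((x-a)^{N}w\bigr)=\sum_{i}\binom{N}{i}(x-a)^{N-i}D^{(\ell-i)}w$ shows every Hasse derivative of order $<N$ vanishes at a zero of order $N$; since $\ell\le b-1<p^{e}-1$, all the $S_\ell(n)$ vanish, and with them every coefficient of $F$ in the block.

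The step I expect to be the real obstacle is the first: compressing the multi-index iterated-coefficient formula of Lemma~\ref{lem19}(2) into the single sum $S_\ell(n)$ and recognizing it as a Hasse derivative of $(x^{n}-1)/(x-1)$. Once that identification is in hand, the necessity half is read off from the bottom row, and the sufficiency half reduces to the purely characteristic-$p$ fact that $x^{n}-1$ becomes a large $p$-power times a separable factor, so all low-order Hasse derivatives die at its roots.
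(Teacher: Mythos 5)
Your necessity argument coincides with the paper's: in both cases one extracts the coefficient of $F$ in the bottom row $t=s$ of the Jordan block, which by Lemma~\ref{lem19}(2) collapses to a single geometric sum $\lambda^{n-1}c_s\sum_{j=0}^{n-1}\gamma^j$, and vanishing of this sum forces $p\mid n$ when $\gamma=1$ and $\gamma^n=1$ otherwise; substituting the appropriate $\gamma=\lambda^{\alpha-1}\prod\lambda_i^{e_i}$ recovers each of (\ref{lem11a})--(\ref{lem11d}) exactly as the paper does. Where you genuinely diverge is in the sufficiency half of (\ref{lem11a}) and (\ref{lem11b1}). The paper argues inductively: after showing $p\mid n$ kills the coefficient in the bottom row (and, when $p\neq 2$, also in row $s-1$ via $\frac{n(n-1)}{2}$), it replaces $\phi$ by $\phi^p$ and $n$ by $n/p$, claiming that with each such replacement $F$ drops out of one more row, so that $\phi^{p^e}$ for $e$ large enough clears the whole block. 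This implicitly requires re-applying Lemma~\ref{lem19} to the iterate and tracking how the Jordan structure and the coefficients $c_t$ transform under $\phi\mapsto\phi^p$, which is left rather informal. You instead stay with the original $\phi$ and show directly that every inner sum $S_\ell(n)=\sum_{j=0}^{n-1-\ell}\binom{n-1-j}{\ell}\gamma^j$ vanishes for $0\le\ell\le\mathfrak v-1$, by identifying $S_\ell(n)$ (up to the nonzero factor $\gamma^{n-1-\ell}$) with the $\ell$-th Hasse derivative of $h_n(x)=(x^n-1)/(x-1)$ evaluated at $\gamma^{-1}$, and then using that in characteristic $p$ one has $x^n-1=(x^{n_0}-1)^{p^{e'}}$ with $x^{n_0}-1$ separable, so $h_n$ has a zero of order at least $p^e-1$ at $\gamma^{-1}$ and all Hasse derivatives of order $\ell\le b-1<p^e-1$ die there. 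This is a single, uniform computation that treats all rows at once and avoids the iterate-substitution subtlety; the price is the preliminary identification of the inner sum as a Hasse derivative, which the paper does not use. Both routes are correct; yours gives a cleaner and more clearly rigorous proof of sufficiency, while the paper's replacement-and-shrink argument is more in the spirit of the surrounding proofs.
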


        \begin{proof}
            We will use the description of the coefficients of $F$ under iteration from
            Lemma \ref{lem19}.  Assume that the last row containing $F$ is $\phi_{i_s}$ with
            $1 \leq s \leq \mathfrak{v}$ and label the initial coefficients of the $F$ as
            $c_1,\ldots, c_s$ with at least $c_s \neq 0$.

            If $\deg{F}=1$, then $F = x_{i_s}$ and $c_s = \lambda$ and
            the coefficient in $\phi^{(n)}_{i_s}$ is
            \begin{equation*}
                \lambda^{n}c_s.
            \end{equation*}
            Since $\lambda \neq 0$ in $K$ this coefficient is never divisible by $p$.  So we restrict to the case $\deg{F} >1$.

            \begin{enumerate}
                \item  We want the coefficients to be $0$ in $K$.
                    The coefficient in $\phi_{i_s}$ is given by
                    \begin{equation*}
                        \left(\sum_{j=0}^{n-1}1\right)c_s= nc_s
                    \end{equation*}
                    with $c_s \neq 0$ in $K$.  Hence, we must have $p \mid n$.
                    To see the sufficient condition; assume that $p \mid n$, then
                    at least $F \not\in \supp(\phi^{(n)}_{i_s})$.  Note that if $p \neq 2$, then
                    we also $F \not \in \supp(\phi^{(n)}_{i_{s-1}})$ since the coefficient
                    \begin{equation*}
                        \left(\sum_{j=1}^{n-1}j \right)c_{s-1} = \frac{n(n-1)}{2} c_{s-1}
                    \end{equation*}
                    is divisible by $p$ for $p \mid n$.  Now replace $\phi$ by $\phi^p$ and $n$ by $n/p$ and consider $F$ in the Jordan block of $d\phi_P$.
                    Now the last row containing $F$ is $\phi_{i_s}$.  By the above
                    argument, this coefficient in $\phi^p$ will be 0 in $K$.  Hence, with
                    each power of $p$, $F$ does not appear in the last previously
                    appearing row after iteration.  Since the Jordan block is of finite
                    size, taking $\phi^{p^e}$ for $e$ large enough causes the monomial $F$
                    to not appear in the Jordan block of $d\phi^n_P$.
                    So it is necessary that $p \mid n$ and sufficient that
                    $n=Mp^e$ for $e$ large enough and $(M,p)=1$.

                \item
                    \begin{enumerate}
                        \item
                            We want the coefficients to be $0$ in $K$.
                            The coefficient in $\phi_{i_s}$ is given by
                            \begin{equation*}
                                \left(\sum_{j=0}^{n-1} \lambda^{j}\right)c_{s}
                            \end{equation*}
                            with $c_s \neq 0$ in $K$.  Hence, we must have
                            \begin{equation*}
                                \lambda^n \equiv 1 \mod{p}.
                            \end{equation*}

                            To see the sufficient condition, assume
                            that $\lambda^r \equiv 1 \mod{p}$ for some $r \mid n$ and replace
                            $\phi$ by $\phi^r$ and $n$ by $n/r$.  Now we are in the situation of
                            (a), which we have already demonstrated.
                            So it is necessary that $\lambda$ is an $r$-th root of unity for some $r \mid n$ and sufficient that $n=Mrp^e$ for $e$ large enough and $(M,p)=1$.

                        \item
                            We want the coefficients to be $0$ in $K$.
                            The coefficient in $\phi_{i_s}$ is given by
                            \begin{equation*}
                                \left(\sum_{j=0}^{n-1} \lambda^{j(\alpha-1)+n-1}\right)c_{s}
                            \end{equation*}
                            with $c_s \neq 0$ in $K$.  Hence, $\lambda^{\alpha-1}$ must be an
                            $r$-th root of unity for some $r \mid n$.
                    \end{enumerate}

                \item We want the coefficients to be $0$ in $K$.
                    The coefficient in $\phi_{i_s}$ is given by
                    \begin{equation*}
                        \left(\sum_{j=0}^{n-1} \mathop{\prod_{x_i \mid F}}_{i \not\in \{i_1,\ldots,i_{\mathfrak{v}}\}}
                        \lambda_i^{e_i j}\right) c_{s}
                    \end{equation*}
                    with $c_s \neq 0$ in $K$.  Hence,
                    \begin{equation*}
                        \mathop{\prod_{x_i \mid F}}_{i \not\in \{i_1,\ldots,i_{\mathfrak{v}}\}}
                        \lambda_i^{e_i}
                    \end{equation*}
                    must be an $r$-th root of unity modulo $p$ for some $r \mid n$.

                \item We want the coefficients to be $0$ in $K$.
                    The coefficient in $\phi_{i_s}$ is given by
                    \begin{equation*}
                        \left(\sum_{j=0}^{n-1}\lambda^{j(\alpha-1)+n-1}
                        \mathop{\prod_{x_i \mid F}}_{i \not\in \{i_1,\ldots,i_{\mathfrak{v}}\}}
                        \lambda_i^{e_i j}\right)c_{s}
                    \end{equation*}
                    Hence,
                    \begin{equation*}
                        \lambda^{\alpha-1}
                        \mathop{\prod_{x_i \mid F}}_{i \not\in \{i_1,\ldots,i_{\mathfrak{v}}\}}
                        \lambda_i^{e_i}
                    \end{equation*}
                    must be an $r$-th root of unity modulo $p$ for some $r \mid n$.
            \end{enumerate}
        \end{proof}

       We have now established necessary conditions for a least monomial in
        \begin{equation*}
            \supp(\phi_1(\textbf{x})-x_1,\ldots,\phi_b(\textbf{x})-x_b)
        \end{equation*}
        to not appear in
        \begin{equation*}
            \supp(\phi_1^{(n)}(\textbf{x})-x_1,\ldots,\phi_b^{(n)}(\textbf{x})-x_b).
        \end{equation*}
        However, this vanishing is not sufficient for $a_P(n) \neq a_P(1)$.
        Fortunately, the necessary conditions on $n$ from Lemma \ref{lem11} will be enough to
        show that $\Phi^{\ast}_n(\phi)$ is an effective zero-cycle for all $n \geq 1$.

        The next proposition gathers our knowledge of $a_P(n)$.
        \begin{prop} \label{prop10}
            Let $X \subset \mathbb{P}_K^{N}$ be a non-singular, irreducible, projective variety
            of dimension $b$ defined over $K$.
            Let $\phi:X \to X$ be a morphism defined over $K$ and
            $P \in X(K)$ be a fixed point of $\phi$.  Denote $d\phi_P$ as the map induced by $\phi$ on the
            cotangent space of $X$ at $P$.  Let $\lambda_1,\ldots,\lambda_l$ be the
            distinct eigenvalues of $d\phi_P$ with primitive multiplicative orders
            $r_1,\ldots,r_l$ (set $r_i=\infty$ if $\lambda_i$ is not a root of
            unity). Then for all $n \geq 1$ such that $\phi^n$ is non-degenerate,
            \begin{enumerate}
                \item \label{prop10a} $a_P(n) \geq a_P(1)$.
                \item \label{prop10b} $a_P(n) = 1 \Leftrightarrow \lambda_i^n \neq 1$ for $1 \leq i \leq l$.
                \item \label{prop10c} If $a_P(n) > a_P(1)$, then at least one of the following is
                    true.
                    \begin{enumerate}
                        \item \label{prop10c1} $r_i \mid n$ for at least one $i$ for $1 \leq i \leq l$ with $r_i \neq 1$.
                        \item \label{prop10c2} $p= \text{char}(K) \neq 0$ and $p \mid n$.
                    \end{enumerate}
            \end{enumerate}
        \end{prop}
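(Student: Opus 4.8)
The plan is to treat the three parts in turn; only part \ref{prop10c} requires real work.

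Part \ref{prop10a} is exactly Lemma \ref{lem10}. For part \ref{prop10b}, I would work in $\widehat{R_P}\cong K[[x_1,\dots,x_b,y_1,\dots,y_b]]$ and use the generators $x_i-y_i$ of $I_\Delta$ to eliminate the $y_i$, so that $a_P(n)=\dim_K K[[\mathbf x]]/J_n$ with $J_n=(\phi_1^{(n)}(\mathbf x)-x_1,\dots,\phi_b^{(n)}(\mathbf x)-x_b)$. Since $P$ is a fixed point, each $\phi_i^{(n)}(\mathbf x)-x_i$ lies in the maximal ideal $\mathfrak m$, with linear part given by the matrix $(d\phi_P)^n-I$ (using the chain rule $d(\phi^n)_P=(d\phi_P)^n$ at the fixed point and that a cotangent map and the corresponding tangent map have the same eigenvalues). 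Now $a_P(n)\ge1$ since $P$ is periodic, and $a_P(n)=1$ exactly when $J_n=\mathfrak m$: the implication $J_n=\mathfrak m\Rightarrow a_P(n)=1$ is clear, and conversely if the $b$ linear parts span a proper subspace $V\subsetneq\mathfrak m/\mathfrak m^2$ then $\widehat{R_P}/(J_n+\mathfrak m^2)$ already has dimension $(b+1)-\dim V\ge2$, so $a_P(n)\ge2$. Finally $J_n=\mathfrak m$ is equivalent (by Nakayama) to linear independence of those linear parts, i.e.\ to $\det((d\phi_P)^n-I)\ne0$, i.e.\ to $\lambda_i^n\ne1$ for all $i$.

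For part \ref{prop10c}, I would use Corollary \ref{cor9} to write $a_P(n)=\dim_K\Span(X^v:X^v\notin LT(I_n))$; since $I_n\subseteq I_1$ forces $LT(I_n)\subseteq LT(I_1)$, the hypothesis $a_P(n)>a_P(1)$ is equivalent to $LT(I_n)\subsetneq LT(I_1)$. Pick a monomial $X^w\in LT(I_1)\setminus LT(I_n)$ minimal for the monomial ordering; then no proper divisor of $X^w$ lies in $LT(I_1)$ (a proper divisor in $LT(I_n)$ would force $X^w\in LT(I_n)$, one in $LT(I_1)\setminus LT(I_n)$ would contradict minimality), so $X^w$ is a minimal monomial generator of $LT(I_1)$ that drops out of $LT(I_n)$. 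Invoking the standard-basis machinery (Theorems \ref{thm13}, \ref{thm18}, \ref{thm1}) and the reductions of Lemmas \ref{lem8} and \ref{lem1} — which let me take the generators $\phi_i(\mathbf x)-x_i$ self-reduced and ignore monomials that are products of others under iteration — one shows that $X^w\notin LT(I_n)$ is produced by the vanishing under $n$-fold iteration of (the coefficient of) a least monomial $F$ occurring in some $\phi_i(\mathbf x)-x_i$, and that this vanishing must occur for every row $j$ of the Jordan block of $d\phi_P$ containing row $i$. Then I split on $\deg F$: if $\deg F=1$, Lemma \ref{lem19}(1) shows that either $F=x_{i_s}$ enters with its diagonal coefficient $\lambda^n-1$ (with $\lambda=\lambda_i\ne1$), so vanishing gives $\lambda_i^n=1$ and $r_i\mid n$ with $r_i\ne1$, or $F$ enters only through a nontrivial Jordan superdiagonal with coefficients $\binom{n}{k}\lambda^{n-k}$, whose simultaneous vanishing across the block forces $p=\Char(K)\ne0$ and $p\mid n$; if $\deg F>1$, Lemma \ref{lem11} applies, and case \ref{lem11a} gives $p\mid n$ with $p\ne0$, case \ref{lem11b1} gives that $\lambda=\lambda_i\ne1$ has order $r\mid n$ so that $r_i\mid n$ with $r_i\ne1$, while cases \ref{lem11b2}, \ref{lem11c}, \ref{lem11d} give that a power/product of eigenvalues is an $r$-th root of unity with $r\mid n$. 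In every case we land in \ref{prop10c1} or \ref{prop10c2}.

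The hard part is the last family, \ref{lem11b2}, \ref{lem11c}, \ref{lem11d}: there Lemma \ref{lem11} only guarantees that a \emph{product} $\lambda^{\alpha-1}\prod\lambda_i^{e_i}$ of eigenvalue powers has order dividing $n$, which is strictly weaker than the conclusion \ref{prop10c1} asks about an individual eigenvalue. The resolution is to exploit that $X^w$ is a minimal generator of $LT(I_1)$: this pins down the shape of $F$ — in particular which variables $x_i$ with $\lambda_i\ne1$, and how large a power $\alpha$ of the Jordan-block variables, can occur — so that in exactly those configurations where the product collapses to a root of unity of order too small to supply an eigenvalue of the needed order (notably $\alpha=1$, where Lemma \ref{lem19}(2) gives the last-row coefficient $n\lambda^{n-1}c_s$), the requirement that $F$ vanish in every row of its Jordan block, not just the last, instead forces $p\mid n$ with $p\ne0$. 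This Jordan-block bookkeeping, together with the identification of $X^w$ with a least monomial via standard bases, is the bulk of the argument.
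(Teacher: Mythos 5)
Your handling of parts (\ref{prop10a}) and (\ref{prop10b}) matches the paper: (\ref{prop10a}) is Lemma \ref{lem10}, and your Nakayama/linear-independence argument for (\ref{prop10b}) is essentially the paper's appeal to Zariski--Samuel on regular systems of parameters. For part (\ref{prop10c}) you invoke the same chain (Corollary \ref{cor9}, Lemmas \ref{lem8}, \ref{lem1}, then Lemma \ref{lem11}), which is also the paper's route. The real divergence is in how you close out cases (\ref{lem11b2}), (\ref{lem11c}), (\ref{lem11d}) of Lemma \ref{lem11}, and that is where I think there is a genuine gap.

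You correctly observe that in those cases Lemma \ref{lem11} only forces a \emph{product} $\lambda^{\alpha-1}\prod\lambda_i^{e_i}$ to have order dividing $n$, not an individual eigenvalue, so some extra input is needed to land in (\ref{prop10c1}) or (\ref{prop10c2}). But your proposed fix --- constraining ``the shape of $F$'' via minimality of $X^w$ so that ``the product collapses'' and invoking the Jordan-block bookkeeping of Lemma \ref{lem19}(2) --- is not carried out and does not appear to be the right mechanism. The paper dispatches those cases with a single observation: in each of (\ref{lem11b2}), (\ref{lem11c}), (\ref{lem11d}) there is by hypothesis some $x_i \mid F$ with $\lambda_i\neq 1$, and the paper asserts that consequently the vanishing of $F$'s coefficient ``has no effect on the leading term ideal,'' i.e.\ $a_P(n)=a_P(1)$, so those cases never contribute to $a_P(n)>a_P(1)$ at all. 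The argument supporting that assertion (and the one you should be making) is a dichotomy on $\lambda_i^n$: if $\lambda_i^n\neq 1$ then $x_i$ (after Jordan normalization) remains a leading term of $I_n$, so the monomial $F$, being divisible by $x_i$, reduces away whether or not its coefficient vanishes, and $LT(I_n)$ is unchanged; if instead $\lambda_i^n=1$, then $r_i\mid n$ with $r_i\neq 1$ and (\ref{prop10c1}) holds outright, with no need to analyze $F$ further. You actually have the ingredients for this --- your observation that no proper divisor of the minimal dropout $X^w$ lies in $LT(I_n)$ is precisely what makes $\lambda_i^n\neq 1$ impossible when $x_i\mid X^w$ --- but you never draw the conclusion, and instead pivot to an unexplained claim about forcing $p\mid n$ via whole-block vanishing. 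As written, the proof does not establish (\ref{prop10c}) in the cases you flag as ``the hard part.''

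One further caution: the reduction from the least monomial $F$ to the minimal dropout monomial $X^w$ (you write ``one shows that $X^w\notin LT(I_n)$ is produced by the vanishing \dots of a least monomial $F$'') is asserted rather than proved, and the relationship between $F$ and $X^w$ is not as clean as your sketch suggests; the dichotomy above sidesteps this by working with $F$ directly, so I would recommend abandoning the $X^w$-shape analysis in favor of the $\lambda_i^n$ dichotomy.
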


        \begin{proof}
            \mbox{}
            \begin{enumerate}
            \item Lemma \ref{lem10}.

            \item
                It is clear that $a_P(n)=1$ if and only if $I_n$
                generates the maximal ideal of $\widehat{R_P}$.  This is true if and only if
                \begin{equation*}
                    \{x_1-y_1,\ldots,x_b-y_b,\phi^{(n)}_1(\textbf{x})-y_1,\ldots,
                    \phi^{(n)}_b(\textbf{x})-y_b\}
                \end{equation*}
                is a regular local system of parameters.
                Zariski and Samuel \cite[Corollary 2 page 137]{zariski2} state that this
                occurs if and only if the power series
                \begin{equation*}
                    \{x_1-y_1,\ldots,x_b-y_b,\phi^{(n)}_1(\textbf{x})-y_1,\ldots,
                    \phi^{(n)}_b(\textbf{x})-y_b\}
                \end{equation*}
                contain independent linear terms.
                This is true if and only if $\lambda_i^n \neq 1$ for $1 \leq i \leq l$.

            \item
                We know from Corollary \ref{cor9} that $a_P(n)>a_P(1)$ if and only if certain monomials $F$ has zero coefficients after iteration.  Any such monomial must be a least monomial by Lemma \ref{lem8} and Lemma \ref{lem1}.  Lemma \ref{lem11} gives necessary conditions on $n$ for which any least monomial has zero coefficients after iteration.  Note that cases (\ref{lem11b2}), (\ref{lem11c}), and (\ref{lem11d}) of Lemma \ref{lem11} are cases where $a_P(n) = a_P(1)$ since $\lambda_i \neq 1$ for some $x_i \mid F$.  Hence, the removal of this monomial has no effect on the leading term ideal.  So we are concerned only with the conditions (\ref{lem11a}) and (\ref{lem11b1}) of Lemma \ref{lem11} for which we also know sufficient conditions.
            \end{enumerate}
        \end{proof}

\subsection{Proof of effectivity}
        We will consider several different maps over the course of the proof, so to
        avoid confusion we will include the map in the notation as $a_{P}(\phi,n)$ and
        $a_P^{\ast}(\phi,n)$.

        \begin{lem} \label{lem9}
            Let $p$ be a prime in $\mathbb{Z}$ and let $n = Mp^{e}$ in $\mathbb{Z}^{+}$
            with $e \geq 1$ and $p \nmid M$.
            \begin{enumerate}
                \item \label{lem9a} If $e=1$, then
                    \begin{equation*}
                        a_P^{\ast}(\phi,n) = a_P^{\ast}(\phi^p,M) -
                        a_P^{\ast}(\phi,M).
                    \end{equation*}
                \item \label{lem9b} If $e \geq 2$, then
                    \begin{equation*}
                        a_P^{\ast}(\phi,n) =
                        a_P^{\ast}(\phi^{p^{e-1}},Mp).
                    \end{equation*}
                \item \label{lem9c} Let $n=qM$ where $\gcd(q,M)=1$.  Then
                    \begin{equation*}
                        a_P^{\ast}(\phi,n) = \sum_{d \mid q} \mu\left(\frac{q}{d}\right)a_P^{\ast}(\phi^d,M).
                    \end{equation*}
            \end{enumerate}
        \end{lem}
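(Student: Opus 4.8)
The plan is to derive all three identities from one elementary observation together with some M\"obius bookkeeping; there is no analytic or algebraic difficulty here. The observation is that for all $k,m\geq 1$ with $\phi^{km}$ non-degenerate one has $a_P(\phi^k,m)=a_P(\phi,km)$: since $(\phi^k)^m=\phi^{km}$, the graph of $(\phi^k)^m$ in $X\times X$ is literally $\Gamma_{km}$, the diagonal is unchanged, so the two local intersection indices at $(P,P)$ coincide. Every iterate that appears below has period dividing $n$, so non-degeneracy of $\phi^n$ (together with the Remark that $\phi^d$ is then non-degenerate for $d\mid n$) guarantees all the quantities in the statement are defined.

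I would prove part (\ref{lem9c}) first, since (\ref{lem9a}) and (\ref{lem9b}) follow from it immediately. Write $n=qM$ with $\gcd(q,M)=1$. Each divisor of $n$ factors uniquely as $de$ with $d\mid q$ and $e\mid M$, and since $\gcd(q/d,M/e)=1$ the M\"obius function is multiplicative along this factorization: $\mu(n/de)=\mu(q/d)\,\mu(M/e)$. Unwinding the definition of $a_P^{\ast}$ and using $a_P(\phi,de)=a_P(\phi^d,e)$ from the observation,
\begin{align*}
    a_P^{\ast}(\phi,n)
    &= \sum_{d\mid q}\sum_{e\mid M}\mu\!\left(\tfrac{q}{d}\right)\mu\!\left(\tfrac{M}{e}\right)a_P(\phi,de)\\
    &= \sum_{d\mid q}\mu\!\left(\tfrac{q}{d}\right)\sum_{e\mid M}\mu\!\left(\tfrac{M}{e}\right)a_P(\phi^d,e)
     = \sum_{d\mid q}\mu\!\left(\tfrac{q}{d}\right)a_P^{\ast}(\phi^d,M).
\end{align*}

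For part (\ref{lem9a}), apply (\ref{lem9c}) with $q=p$ (legitimate since $p\nmid M$): the divisors of $p$ are $1$ and $p$, so the right-hand side is $\mu(1)\,a_P^{\ast}(\phi^p,M)+\mu(p)\,a_P^{\ast}(\phi,M)=a_P^{\ast}(\phi^p,M)-a_P^{\ast}(\phi,M)$. For part (\ref{lem9b}), apply (\ref{lem9c}) with $q=p^e$; since $\mu(p^e/d)$ vanishes unless $d\in\{p^{e-1},p^e\}$, the sum collapses to $a_P^{\ast}(\phi^{p^e},M)-a_P^{\ast}(\phi^{p^{e-1}},M)$. Setting $\psi=\phi^{p^{e-1}}$, so that $\phi^{p^e}=\psi^{p}$, this reads $a_P^{\ast}(\psi^{p},M)-a_P^{\ast}(\psi,M)$, which by part (\ref{lem9a}) applied to $\psi$ (with the same $M$ and $p$, using $p\nmid M$ once more) equals $a_P^{\ast}(\psi,Mp)=a_P^{\ast}(\phi^{p^{e-1}},Mp)$, as claimed.

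I do not expect a genuine obstacle; the content is purely combinatorial. The only points requiring a word of care are: (i) that the clean iteration identity $a_P(\phi^k,m)=a_P(\phi,km)$ is all one needs, rather than some subtler compatibility of intersection multiplicities; (ii) that the coprimality $\gcd(q,M)=1$ is exactly what makes $\mu$ split over the factorization of divisors in part (\ref{lem9c}); and (iii) that all iterated maps occurring in the computation are non-degenerate because their periods divide $n$, so every term is well defined.
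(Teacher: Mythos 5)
Your proof is correct, and its organization differs modestly from the paper's. You prove part (\ref{lem9c}) first, by splitting divisors of $qM$ as products $d_1 d_2$ with $d_1\mid q$, $d_2\mid M$ and using multiplicativity of $\mu$ together with the identity $a_P(\phi^k,m)=a_P(\phi,km)$; you then specialize to $q=p$ for (\ref{lem9a}), and to $q=p^e$ (followed by an application of (\ref{lem9a}) to $\psi=\phi^{p^{e-1}}$) for (\ref{lem9b}). The paper instead proves (\ref{lem9a}) and (\ref{lem9b}) first, by peeling off a single factor of $p$ from $n=Mp^e$ to obtain the intermediate identity
\begin{equation*}
a_P^{\ast}(\phi,n) = a_P^{\ast}(\phi^p,Mp^{e-1}) + \sum_{d\mid M}\mu\!\left(\tfrac{Mp^e}{d}\right)a_P(\phi,d),
\end{equation*}
then noting that the correction term equals $-a_P^{\ast}(\phi,M)$ when $e=1$ and vanishes when $e\geq 2$ (since $Mp^e/d$ is not squarefree), and finally proving (\ref{lem9c}) by a separate direct computation that is essentially identical to yours. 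Both routes rest on the same two ingredients: the graph identity $a_P(\phi^k,m)=a_P(\phi,km)$ and the multiplicativity of $\mu$ on coprime arguments. Your dependency structure $(\ref{lem9c}) \Rightarrow (\ref{lem9a}) \Rightarrow (\ref{lem9b})$ is a bit more economical, since the general multiplicative splitting subsumes the prime-peeling step and avoids repeating the M\"obius bookkeeping; the paper's version keeps (\ref{lem9a}) and (\ref{lem9b}) self-contained at the mild cost of proving (\ref{lem9c}) from scratch afterward.
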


        \begin{proof}
            Computing, we get
            \begin{align*}
                a_P^{\ast}(\phi,n) &= \sum_{d \mid n}
                \mu\left(\frac{n}{d}\right)a_P(\phi,d)\\
                &= \sum_{pd \mid n} \mu\left(\frac{n}{pd}\right)a_P(\phi,pd) +
                    \sum_{d \mid M} \mu\left(\frac{n}{d}\right)a_P(\phi,d) \\
                &= \sum_{d \mid Mp^{e-1}} \mu\left(\frac{Mp^{e-1}}{d}\right)a_P(\phi^p,d) +
                    \sum_{d \mid M} \mu\left(\frac{Mp^e}{d}\right)a_P(\phi,d) \\
                &= a_P^{\ast}(\phi^p,Mp^{e-1}) +
                    \sum_{d \mid M} \mu\left(\frac{Mp^e}{d}\right)a_P(\phi,d).
            \end{align*}
            So we have
            \begin{equation} \label{eq4}
                a_P^{\ast}(\phi,n) =
                a_P^{\ast}(\phi^p,Mp^{e-1}) +
                    \sum_{d \mid M} \mu\left(\frac{Mp^e}{d}\right)a_P(\phi,d).
            \end{equation}

            \begin{enumerate}
                \item Considering (\ref{eq4}) with $e=1$, we have
                    \begin{align*}
                        a_P^{\ast}(\phi,n) &= a_P^{\ast}(\phi^p,M) +
                        \sum_{d \mid M} \mu\left(\frac{Mp}{d}\right)a_P(\phi,d) \\
                        &= a_P^{\ast}(\phi^p,M) +
                            \sum_{d \mid M} \mu(p) \mu\left(\frac{M}{d}\right)a_P(\phi,d) \\
                        &= a_P^{\ast}(\phi^p,M) - a_P^{\ast}(\phi,M),
                    \end{align*}
                    where the middle equality comes from the fact that $\mu$ is multiplicative and
                        $(p,M)=1$.

                \item Considering (\ref{eq4}) with $e>1$, we have
                    \begin{align*}
                        a_P^{\ast}(\phi,n) &= a_P^{\ast}(\phi^p,Mp^{e-1}) +
                        \sum_{d \mid M} \mu\left(\frac{Mp^e}{d}\right)a_P(\phi,d) \\
                        &= a_P^{\ast}(\phi^p,Mp^{e-1}) + 0,
                    \end{align*}
                    where the second equality comes from the fact that $\frac{Mp^e}{d}$
                    is not square free for all $d \mid M$.
                    Replacing $\phi$ by $\phi^p$ and $n$ by $n/p$, we may repeat the
                    argument to conclude that
                    \begin{equation*}
                        a_P^{\ast}(\phi,n) =
                        a_P^{\ast}(\phi^{p^{e-1}},Mp).
                    \end{equation*}

                \item  Using the fact that the M\"{o}bius function is multiplicative for relatively prime
                    numbers, we get
                    \begin{align*}
                        a_P^{\ast}(\phi,n) &= \sum_{d \mid qM} \mu\left(\frac{qM}{d}\right)a_P(\phi,d) \\
                        &=\sum_{d_1 \mid q}\sum_{d_2 \mid M}
                        \mu\left(\frac{qM}{d_1d_2}\right)a_P(\phi,d_1d_2)\\
                        &=\sum_{d_1 \mid q}\sum_{d_2 \mid M}
                        \mu\left(\frac{q}{d_1}\right)\mu\left(\frac{M}{d_2}\right)a_P(\phi,d_1d_2)\\
                        &=\sum_{d_1 \mid q}\mu\left(\frac{q}{d_1}\right)\sum_{d_2 \mid M}
                        \mu\left(\frac{M}{d_2}\right)a_P(\phi^{d_1},d_2)\\
                        &=\sum_{d_1 \mid
                        q}\mu\left(\frac{q}{d_1}\right)a_P^{\ast}(\phi^{d_1},M).
                    \end{align*}
            \end{enumerate}
        \end{proof}

        In the next lemma, we provide a formula for $a_P^{\ast}(n)$ when
        $n=\lcm(r_{i_1},\ldots,r_{i_k})$ for some subset $\{r_{i_1},\ldots,r_{i_k}\}$
        of $\{r_1,\ldots,r_l\}$.  We clearly need that each $r_{i_t}$ is finite,
        in other words, that $\lambda_{i_t}$ has finite order, and we will also assume that each
        $r_{i_t} \neq 1$.
        \begin{lem} \label{lem13}
            Let $P$ be a fixed point of $\phi$.  Let $r_i$ be the primitive order of
            $\lambda_i$ in $K^{\ast}$ for $1 \leq i \leq l$ (set $r_i=\infty$ if $\lambda_i$ is not a root of unity).
            If $n=\lcm(r_{i_1},\ldots,r_{i_k})$ for some subset of non-trivial finite orders
            $\{r_{i_1},\ldots,r_{i_k}\} \subseteq \{r_1,\ldots,r_l\}$ with $n \neq 0$ in $K$ and
            square free with no other $r_i$ dividing $n$, then we have
            \begin{align*}
                a_P^{\ast}(\phi,n) &= \\
                &\sum_{d\mid n} \mu\left(\frac{n}{d}\right)a_P(1) \\
                &+\sum_{t=1}^{k} \sum_{d \mid \frac{n}{r_{i_t}}}
                \mu\left(\frac{n}{dr_{i_t}}\right) c_{i_t} \\
                &+\sum_{t1=1}^{k} \mathop{\sum_{t2=1}^{k}}_{i_{t2} \neq i_{t1}}
                \sum_{d \mid \frac{n}{\lcm(r_{i_{t1}},r_{i_{t2}})}}
                \mu\left(\frac{n}{d\lcm(r_{i_{t1}},r_{i_{t2}})}\right) c_{i_{t1},i_{t2}} \\
                & \vdots \\
                &+ \sum_{t=1}^{k} \sum_{d \mid \frac{n}
                {\lcm(r_{i_1},\ldots,\widehat{r_{i_t}},\ldots,r_{i_k})}}
                \mu\left(\frac{n}{d\lcm(r_{i_1},\ldots,\widehat{r_{i_t}},\ldots,r_{i_k})}\right)
                c_{{i_1},\ldots,\widehat{c_{i_t}},\ldots,i_k}\\
                &+\sum_{d \mid 1} c_{i_1,\cdots,i_k}
            \end{align*}
            for some non-negative constants $c_{\alpha}$.
        \end{lem}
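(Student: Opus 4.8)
The plan is to describe how $a_P(\phi,d)$ varies as $d$ ranges over the divisors of $n$, to define the constants $c_\alpha$ by M\"obius inversion over the lattice of subsets of $\{i_1,\dots,i_k\}$, and then to obtain non-negativity from the standard-basis description of $a_P$ in Corollary~\ref{cor9}, the last step being where the real work lies.

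\emph{Reduction of divisors.} First I would prove that for every $d\mid n$ one has $a_P(\phi,d)=a_P(\phi,\lcm\{r_{i_t}:r_{i_t}\mid d\})$. Writing $d'=\lcm\{r_{i_t}:r_{i_t}\mid d\}$, so that $d'\mid d$, the inequality $a_P(\phi,d')\le a_P(\phi,d)$ is Lemma~\ref{lem10} applied to $\phi^{d'}$ with exponent $d/d'$. For the reverse, note $a_P(\phi,d)=a_P(\phi^{d'},d/d')$, since $\Gamma_d$ is also the graph of $(\phi^{d'})^{d/d'}$ and $P$ is a fixed point of $\phi^{d'}$, and apply Proposition~\ref{prop10}(\ref{prop10c}) to $\phi^{d'}$ at $P$: part (\ref{prop10c2}) cannot occur because $n\ne 0$ in $K$ forces $p\nmid d$, and part (\ref{prop10c1}) would require an eigenvalue $\lambda_i^{d'}$ of $d\phi^{d'}_P$ whose primitive order $r_i/\gcd(r_i,d')$ exceeds $1$ and divides $d/d'$; but then $r_i\mid d$ with $r_i>1$, so $r_i\in\{r_{i_1},\dots,r_{i_k}\}$ by the hypothesis that no other $r_j$ divides $n$, hence $r_i\mid d'$, contradicting $r_i/\gcd(r_i,d')>1$. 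Thus $a_P(\phi^{d'},d/d')=a_P(\phi^{d'},1)=a_P(\phi,d')$.

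\emph{The constants and the identity.} For $S\subseteq\{i_1,\dots,i_k\}$ put $\ell_S=\lcm(r_i:i\in S)$, with $\ell_\emptyset=1$, and set $c_S=\sum_{T\subseteq S}(-1)^{|S|-|T|}a_P(\phi,\ell_T)$; by M\"obius inversion on the Boolean lattice, $a_P(\phi,\ell_S)=\sum_{T\subseteq S}c_T$ with $c_\emptyset=a_P(\phi,1)=a_P(1)$, and these are the asserted constants ($c_{i_t}$ for a singleton, $c_{i_1,\dots,i_k}$ for the full set). Since for $d\mid n$ one has $T\subseteq\{i_t:r_{i_t}\mid d\}$ iff $\ell_T\mid d$, the reduction above gives $a_P(\phi,d)=\sum_{T:\ \ell_T\mid d}c_T$. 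Substituting into $a_P^{\ast}(\phi,n)=\sum_{d\mid n}\mu(n/d)a_P(\phi,d)$ and interchanging the sums,
\[
  a_P^{\ast}(\phi,n)=\sum_{T}c_T\sum_{\substack{d\mid n\\ \ell_T\mid d}}\mu(n/d)=\sum_{T}c_T\sum_{e\mid n/\ell_T}\mu\!\left(\frac{n/\ell_T}{e}\right),
\]
where the last step reindexes $d=\ell_T e$, valid because $n$ is square free so $\gcd(\ell_T,n/\ell_T)=1$ and hence $\mu(n/d)=\mu((n/\ell_T)/e)$. The inner sum is precisely the coefficient pattern of the statement: $T=\emptyset$ (with $\ell_\emptyset=1$ and $c_\emptyset=a_P(1)$) gives the first line, singletons the second, and $T=\{i_1,\dots,i_k\}$ (with $\ell_T=n$) gives the final term $\sum_{d\mid 1}c_{i_1,\dots,i_k}$.

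\emph{Non-negativity (the main obstacle).} It remains to show $c_S\ge 0$. By Corollary~\ref{cor9}, $a_P(\phi,\ell_S)$ is the number of monomials $X^v\notin LT(I_{\ell_S})$, and since $\ell_T\mid\ell_S$ for $T\subseteq S$, Lemma~\ref{lem10} makes $\{X^v:X^v\notin LT(I_{\ell_S})\}$ a non-decreasing family of finite sets indexed by the subsets $S$ (equivalently, $\{S:X^v\notin LT(I_{\ell_S})\}$ is an up-set for each $X^v$). The claim is that this up-set is \emph{principal}: each monomial in $LT(I_1)\setminus LT(I_n)$ is freed at a unique minimal subset $S(X^v)$ — informally, the minimal set of eigenvalue-directions on which it depends — so that $X^v\notin LT(I_{\ell_S})$ exactly when $S\supseteq S(X^v)$. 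Granting this, $c_S$ counts precisely the monomials with $S(X^v)=S$ and is therefore a non-negative integer. Establishing the principality is where the analysis behind Proposition~\ref{prop10} is needed: after the reductions of Lemmas~\ref{lem8} and~\ref{lem1}, $LT(I_d)$ is governed by the least monomials of $\phi^{(d)}_j(\mathbf{x})-x_j$, and Lemma~\ref{lem11} shows such a monomial acquires coefficient $0$ exactly when the least common multiple of the primitive orders of the eigenvalues responsible for it divides $d$; together with $p\nmid n$, the square-freeness of $n$, and the hypothesis that no further $r_j$ divides $n$, this least common multiple is forced to equal $\ell_{S(X^v)}$. This verification is the crux of the proof; the claimed formula and $c_S\ge 0$ then follow as above.
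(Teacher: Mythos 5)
Your proposal takes essentially the same approach as the paper. Your ``reduction of divisors'' step (that $a_P(\phi,d)$ depends only on $\lcm\{r_{i_t}:r_{i_t}\mid d\}$) is a nice explicit lemma that the paper uses only tacitly, and your argument for it via Proposition~\ref{prop10}(\ref{prop10c}) applied to $\phi^{d'}$ is correct; the M\"obius-inversion bookkeeping and reindexing are likewise sound (the remark about square-freeness justifying $\mu(n/d)=\mu((n/\ell_T)/e)$ is unnecessary, since $d=\ell_Te$ gives this identity trivially).

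You have correctly located the crux in the ``principality'' claim, namely that for each monomial $X^v$ the set $\{S:X^v\notin LT(I_{\ell_S})\}$ is a principal up-set. This is exactly what the paper invokes, less explicitly, when it defines each $c_\alpha$ as the cardinality of a set difference and then asserts that ``by construction, none of the monomials in $\{X^{v}\mid X^{v}\notin LT(I_n)\}$ are counted in multiple constants $c_{\alpha}$, and all of them have been counted.'' Without principality, a monomial freed independently at $r_{i_{t_1}}$ and at $r_{i_{t_2}}$ but not at $1$ would be double-counted, and both your inclusion--exclusion $c_S$ and the paper's set-theoretic $c_\alpha$ would fail to partition the complement of $LT(I_n)$. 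Your sketch of how Lemma~\ref{lem11} together with $p\nmid n$, square-freeness, and the no-other-$r_j$ hypothesis forces a unique minimal lcm is the right set of ingredients, though it is not fleshed out into a proof -- but the paper's treatment of this point is no more complete. One detail you omit that the paper does supply: the explicit witness monomial $x_{i_1}\cdots x_{i_j}x_{\gamma}$, constructed with a specific monomial ordering, which shows the top constant $c_{i_1,\ldots,i_k}$ is not merely $\ge0$ but strictly positive; that positivity (though not part of the lemma's statement) is used later in Case~3 of the proof of Theorem~\ref{thm15}, so it is worth carrying along.
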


        \begin{proof}
            Recall that
            \begin{equation*}
                LT(I_{\Gamma_n} + I_{\Delta})=LT(I_n) \subseteq LT(I_1)=LT(I_{\Gamma_1} + I_{\Delta}).
            \end{equation*}
            In particular, we know that
            \begin{equation*}
                \Span(X^{v} \mid X^{v} \not\in LT(I_1))
                \subseteq
                \Span(X^{v} \mid X^{v} \not\in LT(I_n)).
            \end{equation*}

            From Proposition \ref{prop10}, we know that $a_P(r_{i_t}) > a_P(1)$ for each
            $r_{i_t}$ since $r_{i_t}\neq 1$.  Similarly for $i_{t1} \neq i_{t2}$,
            by replacing $\phi$ with $\phi^{r_{i_{t1}}}$,
            we have
            \begin{equation*}
                \begin{cases}
                    a_P(\lcm(r_{i_{t1}},r_{i_{t2}})) > a_P(r_{i_{t1}}) & \text{if } r_{i_{t2}} \nmid r_{i_{t1}}\\
                    a_P(\lcm(r_{i_{t1}},r_{i_{t2}})) = a_P(r_{i_{t1}}) & \text{if } r_{i_{t2}} \mid
                    r_{i_{t1}}
                \end{cases}
            \end{equation*}
            since in the second case $\lcm(r_{i_{t1}},r_{i_{t2}}) = r_{i_{t1}}$.
            Continuing in the same manner, we have
            \begin{equation*}
                \begin{cases}
                    a_P(\lcm(r_{i_1},\ldots,r_{i_j},r_{i_{\gamma}})) > a_P(\lcm(r_{i_1},\ldots,r_{i_j}))
                    & \text{if } r_{i_{{\gamma}}} \nmid \lcm(r_{i_1},\ldots,r_{i_j})\\
                    a_P(\lcm(r_{i_1},\ldots,r_{i_j},r_{i_{{\gamma}}})) = a_P(\lcm(r_{i_1},\ldots,r_{i_j}))
                    & \text{if } r_{i_{\gamma}} \mid \lcm(r_{i_1},\ldots,r_{i_j},r_{i_{{\gamma}}}).
                \end{cases}
            \end{equation*}
            Again in the second case, we have $\lcm(r_{i_1},\ldots,r_{i_j},r_{i_{{\gamma}}}) =
            \lcm(r_{i_1},\ldots,r_{i_j})$, so we have left to consider the first case.
            In particular, for any $\beta$ defined as the least common multiple of any $j$ of
            $\{r_{i_1},\ldots,r_{i_j},r_{i_{{\gamma}}}\}$, we have
            $\{X^{v} \mid X^{v} \not\in LT(I_{\lcm(r_{i_1},\ldots,r_{i_j},r_{i_{{\gamma}}})})\}$
            containing at least one element not in $\{X^{v} \mid X^{v} \not\in
            LT(I_{\beta})\}$.  To see this, consider the ordering
            \begin{equation*}
                x_{i_1} < x_{i_2} < \cdots < x_{i_j} < x_{{\gamma}} < x_{i_t} < \cdots < x_{i_{b-j-1}}.
            \end{equation*}
            Then for each $\beta$, one of the linear terms $x_{i_1}, \ldots,
            x_{i_j},x_{i_{{\gamma}}}$ is contained in $LT(I_{\beta})$ since it is a leading term
            of the associated $\phi_i^{(\beta)}(x_1,\ldots,x_b)-x_i$.  Also, none of the linear terms
            $x_{i_1}, \ldots, x_{i_j},x_{i_{{\gamma}}}$ are contained in
            $LT(I_{\lcm(r_{i_1},\ldots,r_{i_j},r_{i_{{\gamma}}})})$.  Hence, the monomial
            \begin{equation*}
                x_{i_1}x_{i_2}\cdots x_{i_j} x_{\gamma}
            \end{equation*}
            is in
            $\{X^{v} \mid X^{v} \not\in LT(I_{\lcm(r_{i_1},\ldots,r_{i_j},r_{i_{{\gamma}}})})\}$
            but not in any of the $\{X^{v} \mid X^{v} \not\in LT(I_{\beta})\}$.  This
            argument ensures the non-negativity of the constants $c_{\alpha}$ defined below.

            We have $a_P(1) \geq 1$ since $P$ is a fixed point and since
            \begin{equation*}
                \{X^{v} \mid X^{v} \not\in LT(I_1)\}
                \subseteq
                \{X^{v} \mid X^{v} \not\in LT(I_{\kappa})\}
            \end{equation*}
            for all $\kappa \geq 1$, we have a contribution of $a_P(1)$ to $a_P(d)$ for all
            $d \mid n$.

            Let $c_{i_t} = a_P(r_{i_t})-a_P(1) > 0$ for $1 \leq t \leq k$ since $r_{i_t} \neq 1$
            by assumption for $1 \leq t \leq k$.  Since
            \begin{equation*}
                \{X^{v} \mid X^{v} \not\in LT(I_{r_{i_t}})\}
                \subseteq
                \{X^{v} \mid X^{v} \not\in LT(I_{\kappa})\}
            \end{equation*}
            for all $\kappa$ with $r_{i_t} \mid \kappa$, we have a contribution of $c_{i_t}$ to $a_P(d)$ for all
            $d \mid \frac{n}{r_{i_t}}$.

            Let
            \begin{equation*}
                c_{i_{t1},i_{t2}} = a_P(\lcm(r_{i_{t1}},r_{i_{t2}})) - \#\{X^{v} \mid X^{v} \not\in LT(I_{r_{i_{t1}}})\}
                \cup \{X^{v} \mid X^{v} \not\in LT(I_{r_{i_{t2}}})\}.
            \end{equation*}
            If $r_{i_{t2}} \mid r_{i_{t1}}$, then $c_{i_{t1},i_{t2}}=0$ since
            $\lcm(r_{i_{t1}},r_{i_{t2}}) = r_{i_{t1}}$.  Otherwise, by the argument at the beginning of the proof, there is at least one monomial
            not in $LT(I_{r_{t1},r_{t2}})$ that is not in the complement of
            $LT(I_{r_{t1}})$ or $LT(I_{r_{t2}})$.  Hence $c_{i_{t1},i_{t2}} \geq 0$.  Since
            \begin{equation*}
                \{X^{v} \mid X^{v} \not\in LT(I_{\lcm(r_{i_{t1}},r_{i_{t2}})})\}
                \subseteq
                \{X^{v} \mid X^{v} \not\in LT(I_{\kappa})\}
            \end{equation*}
            for all $\kappa$ with $\lcm(r_{i_{t1}},r_{i_{t2}}) \mid \kappa$, we have a contribution of
            $c_{i_{t1},i_{t2}}$ to $a_P(d)$ for all $d \mid \frac{n}{\lcm(r_{i_{t1}},r_{i_{t2}})}$.

            Similarly, for $2 \leq j \leq k$, let $\beta$ be the least common multiple of $j$ elements of
            $\{r_{i_{t1}},\ldots,r_{i_{tj}},r_{i_{\gamma}}\}$ and let
            \begin{equation*}
                c_{i_{t1},\ldots,i_{tj},i_{\gamma}} = a_P(\lcm(r_{i_{t1}},\ldots,r_{i_{tj}},r_{i_{\gamma}})) -
                \#\left(\bigcup_{\beta}
                \{X^{v} \mid X^{v} \not\in LT(I_{\beta})\}\right).
            \end{equation*}
            If $r_{i_{\gamma}} \mid \lcm(r_{i_{t1}},\ldots,r_{i_{tj}})$, then
            $c_{i_{t1},\ldots,i_{tj},i_{{\gamma}}}=0$ since
            $\lcm(r_{i_{t1}},\ldots,r_{i_{tj}},r_{i_{{\gamma}}}) =
            \lcm(r_{i_{t1}},\ldots,r_{i_{tj}})$.
            Otherwise, by the argument at the beginning of the proof, there is at least one monomial not in $LT(I_{r_{i_{t1}},\ldots,r_{i_{tj}},r_{i_{{\gamma}}}})$ that is not in the complement of $LT(I_{\beta})$ for each $\beta$.
            Hence $c_{i_{t1},\ldots,i_{tj},i_{\gamma}} \geq 0$.
            Since
            \begin{equation*}
                \{X^{v} \mid X^{v} \not\in LT(I_{\lcm(r_{i_{t1}},\ldots,r_{i_{tj}},r_{i_{\gamma}})})\}
                \subseteq
                \{X^{v} \mid X^{v} \not\in LT(I_{\kappa})\}
            \end{equation*}
            for all $\kappa$ with $\lcm(r_{i_{t1}},\ldots,r_{i_{tj}},r_{i_{{\gamma}}}) \mid \kappa$,
            we have a contribution of
            $c_{i_{t1},\ldots,i_{tj},i_{\gamma}}$ to $a_P(d)$ for all
            $d \mid \frac{n}{\lcm(r_{i_{t1}},\ldots,r_{i_{tj}},r_{i_{{\gamma}}})}$.

            Notice that by construction, none of the monomials in
            $\{X^{v} \mid X^{v} \not\in LT(I_{n})\}$ are counted in multiple constants $c_{\alpha}$,
            and all of them have been counted.  Hence, the formula holds.
        \end{proof}

        \begin{rem}
            Notice that Lemma \ref{lem13} implies that
            $a_P^{\ast}(n) \geq 0$ for all $n=\lcm(r_{i_1},\ldots,r_{i_k})$ since each line is either
            $0$ or $c_{\alpha}$ by properties of the M\"obius function and
            the constants $c_{\alpha}$ are all non-negative.  We may
            assume $n$ is square free by Lemma \ref{lem9}(\ref{lem9b}).
        \end{rem}

        We are now ready to prove the main theorem.
        \begin{thm} \label{thm15}
            Let $X \subset \mathbb{P}_K^{N}$ be a non-singular, irreducible, projective variety
            of dimension $b$ defined over $K$ and let $\phi:X \to X$ be a morphism defined over $K$.
            Let $P$ be a point in $X(K)$. Define integers
            \begin{enumerate}
                \item[] $p =$ the characteristic of $K$.
                \item[] $m =$ the primitive period of $P$ for $\phi$ (set $m=\infty$ if $P \not\in
                \text{Per}(\phi)$).
            \end{enumerate}
            If $m$ is finite,
            let $d\phi^m_P$ be the map induced by $\phi^m$ on the
            cotangent space of $X$ at $P$, and
            let $\lambda_1, \ldots,\lambda_l$ be the distinct eigenvalues of $d\phi^m_P$.
            Define
            \begin{enumerate}
                \item[] $r_i =$ the multiplicative period of $\lambda_i$ in
                $K^{\ast}$ (set $r_i=\infty$ if $\lambda_i$ is not a root of
                unity).
            \end{enumerate}
            Then
            \begin{enumerate}
                \item For all $n \geq 1$ such that $\phi^n$ is non-degenerate, $a_P^{\ast}(n) \geq 0$.
                \item Let $n \geq 1$.  If $\phi^n$ is non-degenerate and $a_P^{\ast}(n) \geq 1$, then
                    $n$ has one of the following forms:
                    \begin{enumerate}
                        \item $n=m$.
                        \item $n=m\lcm(r_{i_1},\ldots,r_{i_k})$ for $1\leq k \leq l$.
                        \item $n=m\lcm(r_{i_1},\ldots,r_{i_k})p^e$ for $1\leq k \leq l$ and some $e \geq 1$.
                    \end{enumerate}
            \end{enumerate}
        \end{thm}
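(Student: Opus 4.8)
The plan is to reduce the statement to the situation already handled by Lemma \ref{lem13}. First, if $P$ is not periodic then $a_P(\phi,d)=0$ for every $d$, so $a_P^{\ast}(\phi,n)=0$ for all $n$ and both assertions are trivial; assume then that $P$ has finite primitive period $m$. Since $a_P(\phi,d)=0$ unless $m\mid d$, while for $m\mid d$ the local picture of $\phi^{d}$ at $(P,P)$ is that of $(\phi^{m})^{d/m}$, so that $a_P(\phi,me')=a_P(\phi^{m},e')$, substituting into the definition of $a_P^{\ast}$ and using $\mu(mn'/(me'))=\mu(n'/e')$ yields $a_P^{\ast}(\phi,n)=0$ when $m\nmid n$ and $a_P^{\ast}(\phi,mn')=a_P^{\ast}(\phi^{m},n')$ otherwise. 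Hence, writing $\psi:=\phi^{m}$ and letting $\lambda_1,\dots,\lambda_l$ be the eigenvalues of $d\psi_P$ with multiplicative orders $r_1,\dots,r_l$, it suffices to show that $a_P^{\ast}(\psi,n')\ge 0$ whenever $\psi^{n'}$ is non-degenerate and that $a_P^{\ast}(\psi,n')\ge 1$ forces $n'=1$, or $n'=\lcm$ of a subset of $\{r_i\}$, or $n'=\bigl(\lcm\text{ of a subset of }\{r_i\}\bigr)\cdot p^{e}$ with $e\ge 1$; in the latter two cases the subset will turn out to be non-empty.

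The structural engine is the following consequence of the preceding subsections: by Corollary \ref{cor9} and Lemmas \ref{lem8}, \ref{lem1}, \ref{lem10}, the quantity $a_P(\psi,d)-a_P(\psi,1)$ counts the monomials lying in $LT(I_1)$ but not in $LT(I_d)$, and by the analysis behind Proposition \ref{prop10} the only monomials relevant to $LT(I_1)$ are those of cases (\ref{lem11a}) and (\ref{lem11b1}) of Lemma \ref{lem11}, whose departure from $LT(I_d)$ is governed precisely by conditions of the shape ``$r_i\mid d$'' (case (\ref{lem11b1})) and ``$v_p(d)\ge e$'' for suitable thresholds $e\ge1$ (case (\ref{lem11a})); moreover $LT(I_{d})\subseteq LT(I_{d'})$ when $d\mid d'$. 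Thus $a_P(\psi,d)$, and after M\"obius inversion $a_P^{\ast}(\psi,\cdot)$, depends on $d$ only through the set $\{i:r_i\mid d\}$ and --- in characteristic $p>0$ --- the exponent $v_p(d)$. Feeding this into Lemma \ref{lem9}(\ref{lem9c}) (to peel off primes $\ell\neq p$ dividing no $r_i$) and Lemma \ref{lem9}(\ref{lem9b}) (applied to arbitrary primes, to bring prime powers down to first powers) shows that $a_P^{\ast}(\psi,n')=0$ unless $n'=Lp^{e}$ with $p\nmid L$, $L$ an $\lcm$ of a subset of $\{r_i\}$, and $e=0$ in characteristic $0$; and when $L=1$ and $e\ge1$ the same analysis shows $a_P^{\ast}(\psi,n')\neq0$ forces some $\lambda_i=1$, so $L$ is the $\lcm$ of a non-empty subset. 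This proves assertion (2). One point to track through the bookkeeping: Lemma \ref{lem9}(\ref{lem9b}) replaces $\psi$ by a power of it, hence the $r_i$ by their images under that power; only the divisibility relations among the orders are used, and these are preserved.

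For the non-negativity (assertion (1)) it now suffices, by Lemma \ref{lem9}(\ref{lem9b}) again, to treat $n'=L$ and $n'=Lp$ with $L$ square free, coprime to $p$, and equal to $\lcm\{r_i:r_i\mid L\}$ (in all other cases $a_P^{\ast}(\psi,n')=0$ by the previous paragraph). For $n'=L$ this is Lemma \ref{lem13}: the displayed formula there presents $a_P^{\ast}(\psi,L)$ as a sum of lines, each of which is $0$ or one of the non-negative constants $c_{\alpha}$, the surviving lines being exactly those whose $\lcm$ equals $L$. For $n'=Lp$, Lemma \ref{lem9}(\ref{lem9a}) gives $a_P^{\ast}(\psi,Lp)=a_P^{\ast}(\psi^{p},L)-a_P^{\ast}(\psi,L)$; applying the charging argument of Lemma \ref{lem13} to $\psi$ and to $\psi^{p}$ --- which share the orders $r_i$, hence the pattern of surviving lines --- and using $a_P(\psi^{p},d)=a_P(\psi,pd)\ge a_P(\psi,d)$ to see that each block $c_{\alpha}$ only grows under $\psi\mapsto\psi^{p}$, one obtains that $a_P^{\ast}(\psi,Lp)$ counts exactly the monomials whose first disappearance from $LT$ occurs at period $Lp$ (eigenvalue-order $\lcm$ equal to $L$ and $p$-threshold equal to $1$), hence is $\ge0$. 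The case $e\ge2$ reduces to $e=1$ by Lemma \ref{lem9}(\ref{lem9b}).

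The main obstacle lies in the two structural assertions invoked above. The first is that $a_P(\psi,d)-a_P(\psi,1)$ is genuinely a function of $\bigl(\{i:r_i\mid d\},v_p(d)\bigr)$: this requires running through all the cases of Lemma \ref{lem11}, isolating the monomials that can appear in $LT(I_1)$, and pinning down for each the exact set of $d$ for which it lies outside $LT(I_d)$. The second, and the real crux, is that a naive monomial-by-monomial count of $\#\bigl(LT(I_1)\setminus LT(I_{n'})\bigr)$ need not be non-negative after M\"obius inversion --- a monomial that disappears because of any one of several competing conditions contributes with inclusion-exclusion signs --- so one must group the monomials into the non-negative blocks $c_{\alpha}$ of Lemma \ref{lem13} and extend that partition so as to account also for the characteristic-$p$ thresholds, and then check that each step of Lemma \ref{lem9} either lands inside this extended framework or produces a vanishing M\"obius sum, all the while keeping the orders $r_i$ and their images under the auxiliary power maps correctly aligned with the three allowed shapes of $n$.
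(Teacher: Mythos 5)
Your proposal follows essentially the same route as the paper's proof: reduce via $a_P^{\ast}(\phi,n)=a_P^{\ast}(\phi^m,n/m)$ to a fixed point, strip irrelevant factors with Lemma \ref{lem9}(\ref{lem9c}) and prime-power exponents with Lemma \ref{lem9}(\ref{lem9b}), handle the $p$-part via Lemma \ref{lem9}(\ref{lem9a}), and invoke Lemma \ref{lem13} for non-negativity via the grouping into the $c_{\alpha}$. Your explicit flag that the eigenvalue orders $r_i$ must be tracked under the auxiliary power maps (and that only divisibility among them is used) makes visible a bookkeeping step the paper leaves implicit, but it is the same argument, not a different one.
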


        \begin{proof}
            Fix a point $P \in X$ and let $n \geq 1$ be an integer such that $\phi^n$ is
            non-degenerate.  By definition, we have
            \begin{equation*}
                a_P^{\ast}(\phi,n) = \sum_{d \mid n} \mu\left(\frac{n}{d}\right) a_P(\phi,d).
            \end{equation*}
            Suppose that $\phi^{n}(P) \neq P$.  Then $\phi^{d}(P) \neq P$ for all $d \mid
            n$, so $a_P(\phi,d)=0$ for all $d \mid n$ since the graph $\Gamma_d$ of $\phi^d$ and
            the diagonal $\Delta$ will not intersect at $(P,P)$.  Hence, $a_P^{\ast}(\phi,n) =
            0$, proving the theorem in this situation.
            We now assume that $\phi^{n}(P) = P$.

            It follows that $P$ is a periodic point for $\phi$, so $m$ is finite
            with $m \mid n$ and $a_P(\phi,d) \geq 1$ if and only if $m \mid d$.
            Computing $a_P^{\ast}(\phi,n)$ in terms of $\phi^m$, we see that
            \begin{align*}
                a_P^{\ast}(\phi,n) &= \sum_{d \mid n \text{ with } m \mid d}
                \mu\left(\frac{n}{d}\right) a_P(\phi,d) \\
                &= \sum_{d\mid(n/m)} \mu\left(\frac{n}{md}\right)a_P(\phi,md) \\
                &= \sum_{d \mid (n/m)} \mu\left(\frac{n/m}{d}\right)a_P(\phi^m,d)\\
                &= a_P^{\ast}(\phi^m,n/m).
            \end{align*}
            Therefore, we can replace $\phi$ by $\phi^m$ and $n$ by $n/m$ and assume that $m=1$.

            We will consider a number of cases, but first we recall from
            Proposition \ref{prop10} that $a_P(\phi,1)=1$ if and only if $r_i \neq 1$ for all
            $1 \leq i \leq l$.

\setcounter{case}{0}
            \begin{case} $n=1$, in other words $n=m$.

                In this case, we have
                \begin{equation*}
                    a_{P}^{\ast}(\phi,n) = a_P(\phi,1).
                \end{equation*}
                Since $P$ is assumed to be fixed by $\phi$,
                \begin{equation*}
                    a_P^{\ast}(\phi,n) = a_P(\phi,1) \geq \begin{cases} 1 & \text{always} \\
                    2 & \text{ if } r_i = 1 \text{ for some } i. \end{cases}
                \end{equation*}
            \end{case}
            \begin{case} $n>1$ and $a_P(\phi,n)=a_P(\phi,1)$.

                Let $d \mid n$; then Proposition \ref{prop10} states that
                \begin{equation*}
                    a_P(\phi,1) = a_P(\phi,n) = a_P(\phi^d,n/d) \geq a_P(\phi^d,1) = a_P(\phi,d)
                    \geq a_P(\phi,1).
                \end{equation*}
                Hence, $a_P(\phi,d)=a_P(\phi,1)$ for all $d \mid n$.  So
                \begin{equation*}
                    a_P^{\ast}(\phi,n) = \sum_{d \mid n}\mu\left(\frac{n}{d}\right)a_P(\phi,1)=0
                \end{equation*}
                by properties of the M\"obius function, since $n>1$ by assumption.
            \end{case}
            \begin{case} $a_P(\phi,n)>a_P(\phi,1)$ and $n\neq 0$ in $K$.

                By the assumptions in this case, we know that at least one $r_{i} \mid n$.  Let
                $n=\lcm(r_{i_1},\ldots,r_{i_k})M$ where $M$ is not divisible by any
                $r_i$.  Then we have
                \begin{equation*}
                    a_P^{\ast}(\phi,n) = \sum_{d\mid M}a_P^{\ast}(\phi^{d},\lcm(r_{i_1},\ldots,r_{i_k}))
                \end{equation*}
                by Lemma \ref{lem9}(\ref{lem9c}).
                However, since $r_i \nmid M$ for all $1\leq i \leq l$, for $d \mid M$, we also have
                $r_i \nmid d$ for all $1 \leq i \leq l$.   Additionally, $n \neq 0$ implies
                $p \nmid n$, so we cannot be in any condition of Proposition
                \ref{prop10}(\ref{prop10c}).  Consequently,
                \begin{align*}
                    a_P^{\ast}(\phi,n) &= \sum_{d\mid M}\mu\left(\frac{M}{d}\right)a_P^{\ast}(\phi^{d},
                        \lcm(r_{i_1},\ldots,r_{i_k}))\\
                        &=\sum_{d\mid
                        M}\mu\left(\frac{M}{d}\right)a_P^{\ast}(\phi,\lcm(r_{i_1},\ldots,r_{i_k}))\\
                        &=0
                \end{align*}
                since $a_P^{\ast}(\phi^d,\lcm(r_{i_1},\ldots,r_{i_k}))$ is constant
                over $d\mid M$.
                So we can assume that $n=\lcm(r_{i_1},\ldots,r_{i_k})$ and $r_i \nmid n$
                for
                $i \not \in \{i_{1},\ldots,i_k\}$.  If $n$ is not square free, then by
                applying Lemma \ref{lem9} to any prime factor $q_j^{e_j}$ with
                $e_{j}>1$, we get
                \begin{equation*}
                    a_P^{\ast}(\phi,n) =
                    a_P^{\ast}\left(\phi^{q_1^{e_1-1}\cdots q_{j}^{e_j-1}},
                        \frac{n}{q_1^{e_1-1}\cdots q_{j}^{e_j-1}}\right).
                \end{equation*}
                So we may replace $n$ by
                $\frac{n}{q_1^{e_1-1}\cdots q_{j}^{e_j-1}}$ and $\phi$ by
                $\phi^{q_1^{e_1-1}\cdots q_{j}^{e_j-1}}$ and assume that $n$ is square free.
                We are now in the case of Lemma \ref{lem13} and have
                \begin{align*}
                    a_P^{\ast}(\phi,n) &=\sum_{d\mid n} \mu\left(\frac{n}{d}\right)a_P(1) \\
                    &+\sum_{t=1}^{k} \sum_{d \mid \frac{n}{r_{i_t}}}
                    \mu\left(\frac{n}{dr_{i_t}}\right) c_{i_t} \\
                    &+\sum_{t1=1}^{k} \mathop{\sum_{t2=1}^{k}}_{i_{t2} \neq i_{t1}}
                    \sum_{d \mid \frac{n}{\lcm(r_{i_{t1}},r_{i_{t2}})}}
                    \mu\left(\frac{n}{d\lcm(r_{i_{t1}},r_{i_{t2}})}\right) c_{i_{t1},i_{t2}} \\
                    & \vdots \\
                    &+ \sum_{t=1}^{k} \sum_{d \mid \frac{n}
                    {\lcm(r_{i_1},\ldots,\widehat{r_{i_t}},\ldots,r_{i_k})}}
                    \mu\left(\frac{n}{d\lcm(r_{i_1},\ldots,\widehat{r_{i_t}},\ldots,r_{i_k})}\right)
                    c_{{i_1},\ldots,\widehat{c_{i_t}},\ldots,i_k}\\
                    &+\sum_{d \mid 1} c_{i_1,\cdots,i_k}
                \end{align*}
                for some non-negative constants $c_{\alpha}$.
                Since every inner sum is either $0$ or $c_{\alpha}$ by properties of the
                M\"obius function, we have that $a_P^{\ast}(\phi,n) \geq 0$ because every
                $c_{\alpha}$ is non-negative.  By assumption, at least one $r_i$ divides $n$, so we know that $c_{n}$ will be positive since it will have at least one additional monomial.  Additionally, the sum associated to $c_n$ will be $c_{n}$ since it is summing over the divisors of $1$.
                So we have shown that
                \begin{equation*}
                    \begin{cases} a_P^{\ast}(\phi,n) \geq 1 & \text{ if M=1} \\
                    a_P^{\ast}(\phi,n) =0 &\text{ otherwise.}
                \end{cases}
                \end{equation*}
            \end{case}
            \begin{case} $a_P(\phi,n)>a_P(\phi,1)$ and $n=0$ in $K$.

                We can write $n=\lcm(r_{i_1},\ldots,r_{i_k})p^eM$ with
                $(r_{i_t},M)=1=(p,M)$ by Proposition \ref{prop10}.
                If $M>1$, then
                \begin{equation*}
                    a^{\ast}_P(\phi, \lcm(r_{i_1},\ldots,r_{i_k})p^e) =
                    a^{\ast}_P(\phi,d \lcm(r_{i_1},\ldots,r_{i_k})p^e)
                \end{equation*}
                for all $d \mid M$ since $M$ is not in one of
                the forms of Proposition \ref{prop10}(\ref{prop10c}).  So
                \begin{align*}
                    a_P^{\ast}(\phi,n)&=\sum_{d\mid M}
                    \mu\left(\frac{n}{d}\right) a_P(\phi^d, \lcm(r_{i_1},\ldots,r_{i_k})p^e)\\
                    &=\sum_{d\mid M}
                    \mu\left(\frac{n}{d}\right) a_P(\phi, \lcm(r_{i_1},\ldots,r_{i_k})p^e)\\
                    &=0,
                \end{align*}
                where the first equality is from Lemma \ref{lem9}(\ref{lem9c}).
                So assume $M=1$.  Computing, we have
                \begin{align*}
                    a_P^{\ast}(\phi,\lcm(r_{i_1},\ldots,r_{i_k}) p^e) &=
                    a_P^{\ast}(\phi^{p^{e-1}},\lcm(r_{i_1},\ldots,r_{i_k}) p) \\
                    &=a_P^{\ast}(\phi^{p^{e-1}},\lcm(r_{i_1},\ldots,r_{i_k}) p) -
                        a_P^{\ast}(\phi^{p^{e-1}},\lcm(r_{i_1},\ldots,r_{i_k}))\\
                    &=a_P^{\ast}(\phi^{p^e},\lcm(r_{i_1},\ldots,r_{i_k})) -
                        a_P^{\ast}(\phi^{p^{e-1}},\lcm(r_{i_1},\ldots,r_{i_k})).
                \end{align*}
                Considering the maps $\phi^{p^e}$ and $\phi^{p^{e-1}}$, we have have
                $\lcm(r_{i_1},\ldots,r_{i_k}) \neq 0$ in $K$.  As in Case 3,
                we may assume that $\lcm(r_{i_1},\ldots,r_{i_k})$ is square free and
                use Lemma \ref{lem13} to write $a_P^{\ast}(\lcm(r_{i_1},\ldots,r_{i_k}))$ in terms of
                the non-negative constants $c_{\alpha}$.  Since we are working with
                constants $c_{\alpha}$ for different maps, we include the map in the
                notation as $c_{\alpha}(\phi^{p^e})$.  The constants that contribute to
                $a_P^{\ast}(\lcm(r_{i_1},\ldots,r_{i_k}))$
                are associated to $\alpha = \lcm(r_{i_1},\ldots,r_{i_k})$ since the M\"{o}bius sum is not identically $0$ in that case.  So if
                \begin{equation*}
                    a_P(\phi,\lcm(r_{i_1},\ldots,r_{i_k}) p^e) =
                    a_P(\phi, \lcm(r_{i_1},\ldots,r_{i_k}) p^{e-1}),
                \end{equation*}
                then $c_{\alpha}(\phi^{p^e}) = c_{\alpha}(\phi^{p^{e-1}})$
                If we get additional key monomials with zero coefficients after iteration, in other words,
                \begin{equation*}
                    a_P(\phi,\lcm(r_{i_1},\ldots,r_{i_k}) p^e)
                    > a_P(\phi, \lcm(r_{i_1},\ldots,r_{i_k}) p^{e-1}),
                \end{equation*}
                then $c_{\alpha}(\phi^{p^e}) > c_{\alpha}(\phi^{p^{e-1}})$.
                Hence,
                \begin{equation*}
                \begin{cases}
                  a_P^{\ast}(\phi,n) = 0 &\text{if } a_P(\phi,\lcm(r_{i_1},\ldots,r_{i_k}) p^e) =
                    a_P(\phi, \lcm(r_{i_1},\ldots,r_{i_k}) p^{e-1}) \\
                  a_P^{\ast}(\phi,n) > 0 & \text{if } a_P(\phi,\lcm(r_{i_1},\ldots,r_{i_k}) p^e)
                  > a_P(\phi, \lcm(r_{i_1},\ldots,r_{i_k}) p^{e-1}).
                \end{cases}
                \end{equation*}
                Hence, $a_P^{\ast}(\phi,n)\geq 0$ always; and if $a_P^{\ast}(\phi,n)>0$, then
                $n$ is in one of the stated forms.
            \end{case}
        \end{proof}

        \begin{rem}
            If char $K=0$, then in Theorem \ref{thm15} we, in fact, have $a_P^{\ast}(n) \geq 1$
            \emph{if and only if} $n=m$ or $n=m\lcm(r_{i_1},\ldots,r_{i_k})$ since we know precisely the conditions for $a_P(n) > a_P(1)$.
        \end{rem}

        Note that Morton and Silverman \cite[Corollary 3.3]{Silverman6} show that for $\dim{X}=b=1$,
        if $n_1 \nmid n_2$ and $n_2\nmid n_1$, then
        $\Phi^{\ast}_{n_1}(\phi)$ and $\Phi^{\ast}_{n_2}(\phi)$ have disjoint support.  They use
        this fact to construct units in $K$ called dynatomic units similar to the construction
        of cyclotomic and elliptical units.  In the general case, the non-divisibility condition may not imply disjoint supports because there are more possible forms of $n$.  In particular, $n_1=mr_1$ and $n_2=mr_2$ could satisfy the divisibility condition, but they do not have disjoint support.

    \section{Formal $n$-periodic points of multiplicity one are primitive $n$-periodic points.} \label{formal_periodic}

        In this section we use the detailed description of the multiplicities from Section
        \ref{sect1} to show that periodic points of formal period $n$ with $n \neq 0 $ in $K$ and
        multiplicity one have primitive period $n$, generalizing \cite[Theorem 2.5]{Morton}.
        \begin{thm} \label{thm17}
            If $P$ is a primitive $m$-periodic point for $\phi$, then $a_P^{\ast}(n) \geq 2$ for all integers $n > m$
            with $\Char{K} \nmid n$ and $a_P^{\ast}(n) \neq 0$.
        \end{thm}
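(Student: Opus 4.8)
The plan is to run the same case analysis as in the proof of Theorem~\ref{thm15}, but carrying one extra piece of bookkeeping in order to detect a \emph{second} unit of multiplicity; the crux will be a single ``bad'' case, handled by the vanishing of certain geometric sums after iteration.

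First, the reductions. Since $a_P^{\ast}(n)\neq 0$, $\Char K\nmid n$ and $n>m$, Theorem~\ref{thm15} forces $n=m\lcm(r_{i_1},\dots,r_{i_k})$ with $1\le k\le l$; discarding any $r_{i_t}=1$ leaves the $\lcm$ unchanged, so we may assume every $r_{i_t}>1$ (at least one is, since $n>m$). As in the proof of Theorem~\ref{thm15}, $a_P^{\ast}(\phi,n)=a_P^{\ast}(\phi^{m},n/m)$, so replacing $\phi$ by $\psi:=\phi^{m}$ we may assume $P$ is a fixed point, and must prove $a_P^{\ast}(\psi,N)\ge 2$ where $N:=n/m=\lcm(r_{i_1},\dots,r_{i_k})>1$ and $N\neq 0$ in $K$. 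Applying Lemma~\ref{lem9}(\ref{lem9b}) to each prime dividing $N$ to a power $>1$, exactly as in Case~3 of the proof of Theorem~\ref{thm15}, we reduce to the case that $N$ is square free; after relabeling, $N=\lcm(\rho_1,\dots,\rho_k)$ with the $\rho_t$ the distinct non-trivial orders of eigenvalues of $d\psi_P$ dividing $N$, and no other eigenvalue order dividing $N$. (The hypothesis $N\neq 0$ in $K$ is exactly what lets Lemma~\ref{lem13} apply below.)

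Now I would invoke Lemma~\ref{lem13}. It writes $a_P^{\ast}(\psi,N)$ as a sum of terms $\bigl(\sum_d\mu(\,\cdot\,)\bigr)c_\alpha$ with all $c_\alpha\ge 0$; since $\sum_{d\mid M}\mu(M/d)=0$ unless $M=1$, only the terms whose associated $\lcm$ is $N$ survive, giving
\[
 a_P^{\ast}(\psi,N)=\sum_{\substack{S\subseteq\{1,\dots,k\}\\ \lcm(\{\rho_j:j\in S\})=N}}c_S,
\]
where $c_S$ counts those monomials of $V_N:=\{X^{v}:X^{v}\notin LT(I_N)\}$ appearing at level $N$ but not at the $\lcm$ of any proper subset of $S$. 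For any $S^{\circ}$ minimal with $\lcm(\{\rho_j\}_{j\in S^{\circ}})=N$, the argument at the end of Case~3 of the proof of Theorem~\ref{thm15} shows the monomial $M_0=\prod_{j\in S^{\circ}}x_{(j)}$, one variable from each Jordan block indexed by $S^{\circ}$, lies in $V_N$ but in no $V_d$ with $d\mid N$, $d<N$; hence $c_{S^{\circ}}\ge 1$. If there are \emph{two} distinct minimal subsets $S_1\neq S_2$, then $a_P^{\ast}(\psi,N)\ge c_{S_1}+c_{S_2}\ge 2$. Otherwise $S^{\circ}$ is unique; every $S$ with $\lcm(\{\rho_j\}_{j\in S})=N$ then contains $S^{\circ}$, and every $S\supsetneq S^{\circ}$ contributes $c_S=0$ (deleting an index of $S\setminus S^{\circ}$ keeps the $\lcm$ equal to $N$, so the relevant $V$'s already exhaust $V_N$). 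Thus $a_P^{\ast}(\psi,N)=c_{S^{\circ}}$, and everything reduces to showing $c_{S^{\circ}}\ge 2$.

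Writing $\beta_j=\lcm(\{\rho_i:i\in S^{\circ},\ i\neq j\})<N$, Lemma~\ref{lem13} gives $c_{S^{\circ}}=\#V_N-\#\bigl(\bigcup_{j\in S^{\circ}}V_{\beta_j}\bigr)$, and any monomial of $V_N$ involving a variable from \emph{every} block $j\in S^{\circ}$ lies outside all the $V_{\beta_j}$ (since $x_{(j)}\in LT(I_{\beta_j})$). So it suffices to find two such monomials in $V_N$; one is $M_0$, and for the second I would take $M_1=x_{(t)}M_0$ (the product $M_0$ with one well-chosen factor squared) and verify, via Lemma~\ref{lem19}(2), that the coefficient of $M_1$ in the relevant component of $\psi^{N}$ is a product of geometric sums $\sum_{j=0}^{N-1}\zeta^{j}$ with each $\zeta$ a root of unity of order $>1$ dividing $N$; each such sum vanishes, and a standard-basis argument in the style of Lemma~\ref{lem13} then shows no lower-order leading term of $I_N$ divides the degree-$(|S^{\circ}|+1)$ monomial $M_1$, so $M_1\in V_N$. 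This verification is the \emph{main obstacle}: the squared factor has to be chosen so that the controlling $\zeta$ genuinely has order $>1$ — if $\zeta=1$ the sum equals $N\neq 0$ in $K$ and nothing vanishes — possibly after replacing $M_0$ by a different minimal monomial (e.g.\ cubing rather than squaring a factor), and the standard-basis bookkeeping must be carried through for $M_1$. The transparent model is $k=1$ (a single Jordan block): eliminating all but one block variable $z$ gives $\widehat{R_P}/I_N\cong K[[z]]/(g)$ with $g$ having no constant, linear, or quadratic term — the quadratic vanishing because $1+\lambda+\cdots+\lambda^{\rho-1}=0$ with $\rho=N$ — so $a_P(\psi,N)\ge 3$; since $a_P(\psi,d)=a_P(\psi,1)=1$ for every proper divisor $d$ of $N$, this yields $a_P^{\ast}(\psi,N)=a_P(\psi,N)-1\ge 2$.
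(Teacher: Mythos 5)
Your reductions (to $P$ a fixed point, to $N=n/m$ square free, and to invoking Lemma~\ref{lem13} together with the observation that only subsets $S$ with $\lcm(\{\rho_j:j\in S\})=N$ contribute) are correct and track the paper's proof closely, as does your structural analysis of the constants $c_S$ and the ``unique minimal subset'' reduction. Your transparent $k=1$, single-block model $K[[z]]/(g)$ with $g$ vanishing to order $\geq 3$ is exactly the content of the paper's Subcase~1.1.

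However, you have explicitly flagged the step that carries the actual content of the theorem as ``the main obstacle'' and left it unresolved: namely, producing a second monomial $M_1\in V_N$ and verifying its coefficient vanishes after iteration. The failure mode you identify is real. For $M_1=x_{(t)}M_0$, Lemma~\ref{lem19}(2) gives the controlling quantity $\zeta=\lambda_{(t)}^{\alpha-1}\prod_{j\neq t}\lambda_{(j)}$, and with several blocks involved this product can certainly equal $1$ even though each $\lambda_{(j)}$ has order $>1$ (e.g.\ $\lambda_{(1)}=\lambda_{(2)}^{-1}$). Your fallback of cubing does not automatically fix this (it changes $\alpha$, hence $\zeta$, but gives no a priori control). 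The paper resolves the issue not by choosing higher powers but by splitting on whether some \emph{other} eigenvalue $\lambda_j$ (i.e.\ one with $j\not\in\{i_1,\ldots,i_k\}$) satisfies $\lambda_j^n=1$; in that regime the paper instead takes $M_1 = x_{i_1}\cdots x_{i_k}x_j$ (Subcases~1.2 and 2.2), and in the complementary regime (Subcases~1.1 and 2.1) it argues, via the pure-power case reduced to a single block, that the quadratic leading terms are absent. Your proposal would need this extra dichotomy on the behavior of the remaining eigenvalues, or a genuine argument pinning down why some choice of $M_1$ has controlling $\zeta\neq 1$; as written, the case $k>1$ with a single minimal $S^{\circ}$ and all other eigenvalues generic is not closed.
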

        \begin{proof}
            Let $n > m \geq 1$ be any integer for which $a_P^{\ast}(n) \neq 0$.  Since
            \begin{equation*}
                a_P^{\ast}(n) = \sum_{d \mid n} \mu\left(\frac{n}{d}\right)a_P(d)
            \end{equation*}
            and $a_P(d) \neq 0$ only for $m \mid d$, we must have $m$ divides $n$.
            Computing $a_P^{\ast}(\phi,n)$ in terms of $\phi^m$, we know that
            \begin{equation*}
                a_P^{\ast}(\phi,n)= a_P^{\ast}(\phi^m,n/m).
            \end{equation*}
            Hence, we may replace $\phi$ by $\phi^m$ and $n$ by $n/m$ and assume that $P$ is a fixed point.
            From Theorem \ref{thm15} we know that for $a_P^{\ast}(n) \neq 0$ and $\Char{K} \nmid
            n$ we have that $n$ is of the form
            \begin{equation*}
                n = \lcm(r_{i_1}, \ldots, r_{i_k}).
            \end{equation*}

\setcounter{case}{0}
            \begin{case} $n=r_i$ for some $1 \leq i \leq b$ (in other words, $k=1$).

                If $\lambda_i$ is in a Jordan block of $d\phi_P$ of size $>1$ then consider as $i$ the first row of the Jordan block.  Let $\beta$ be the size of the Jordan block.  In other words $x_i, \ldots, x_{i+\beta}$ are the rows of the Jordan block.  Let
                \begin{equation*}
                    \delta = \begin{cases}
                        i & \beta=1\\
                        i+\beta & \beta >1.
                    \end{cases}
                \end{equation*}

        \setcounter{subcase}{0}
                \begin{subcase} \label{case1} $\lambda_j \neq 1$ and $\lambda_j^{n} \neq 1$ for all $j \neq i$.

                    We have $a_P(1) = 1$ and need to compute $a_P(n)$.  We know
                    \begin{equation*}
                        \begin{cases}
                            x_j \in \supp(\phi_j^{(n)}(\textbf{x})-x_j) & j \neq i, \beta =1\\
                            x_{j+1} \in \supp(\phi_j^{(n)}(\textbf{x})-x_j) & i \leq j < i+\beta, \beta > 1\\
                            x_j \in \supp(\phi_j^{(n)}(\textbf{x})-x_j) & j \not\in \{i,\ldots,i+\beta\}, \beta >1
                        \end{cases}
                    \end{equation*}
                    and, using Lemma \ref{lem19} for the description of the coefficients of a monomial after iteration, we know that
                    \begin{equation} \label{eq2}
                            x_i^2 \not\in \supp(\phi_{\delta}^{(n)}(\textbf{x})-x_{\delta}).
                    \end{equation}
                    With the appropriate choice of admissible monomial ordering, we have $x_j$ for $j \neq i$ is a leading term of one of the $\phi_k^{(n)}(\textbf{x})-x_k$ for $k \neq \delta$.  Since all of these leading terms are relatively prime they are part of the generating set of a standard basis and we need only consider the monomial $x_i^e \in \supp(\phi_{\delta}^{(n)}(\textbf{x})-x_{\delta})$.  From (\ref{eq2}) we must have $e \geq 3$.
                    So then we have
                    \begin{equation*}
                        LT(I_n) \subset \{x_1,\ldots,x_{i-1},x_i^3,x_{i+1},\ldots,x_b\}.
                    \end{equation*}
                    By Lemma \ref{lem13}, we have added at least $\{x_i,x_i^2\}$ to the complement of the leading term ideal and so
                    \begin{equation*}
                        a_P^{\ast}(n) \geq 2.
                    \end{equation*}
                \end{subcase}
                \begin{subcase} $\lambda_j^n =1$ for some $j \neq i$.

                    We have $x_i$ in $LT(I_d)$ for any $d < n$ but not in $LT(I_n)$ and
                    $x_j \not\in LT(I_n)$.  So we have added at least
                    \begin{equation*}
                        \{x_i,x_ix_j\}
                    \end{equation*}
                    to the complement of $LT(I_n)$, and by Lemma \ref{lem13} we have
                    \begin{equation*}
                        a_P^{\ast}(n) \geq 2.
                    \end{equation*}
                \end{subcase}
            \end{case}
            \begin{case} $k >1$.

                We have that $n=\lcm(r_{i_1}, \ldots, r_{i_k})$.
        \setcounter{subcase}{0}
                \begin{subcase} $\lambda_j \neq1 $ for $j \not\in\{i_1,\ldots,i_k\}$.

                    We have $a_P(1) = 1$.  From Case \ref{case1} we know that
                    $a_P(r_i) \geq 3$ for each $i \in \{i_1,\ldots,i_k\}$.  Hence, we add at least
                    \begin{equation*}
                        \{x_{i_1}\cdots x_{i_k},x_{i_1}^2\cdots x_{i_k}\}
                    \end{equation*}
                    to the complement of the $LT(I_n)$.  So by Lemma \ref{lem13} we have
                    \begin{equation*}
                        a_P^{\ast}(n) \geq 2.
                    \end{equation*}
                \end{subcase}
                \begin{subcase} $\lambda_j = 1$ for some $j \not\in \{i_1,\ldots,i_k\}$.

                    We know $x_j \not\in LT(I_1)$ and hence $x_j \not\in LT(I_n)$.  Additionally,
                    $x_{i_1}\cdots x_{i_k} \in LT(I_h)$ for $h \mid n$ with $h < n$, but
                    $x_{i_1}\cdots x_{i_k} \not\in LT(I_n)$ since $r_{i_t}$ divides $n$ for each
                    $1 \leq t \leq k$.  Consequently, we add at least
                    \begin{equation*}
                        \{x_{i_1}\cdots x_{i_k},x_{i_1}\cdots x_{i_k}x_j\}
                    \end{equation*}
                    to the complement of $LT(I_n)$.  So by Lemma \ref{lem13} we have
                    \begin{equation*}
                        a_P^{\ast}(n) \geq 2.
                    \end{equation*}
                \end{subcase}
            \end{case}
        \end{proof}

        \begin{exmp}
            Theorem \ref{thm17} does not hold for $\Char{K} \mid n$.  In other words, we may have $a_P^{\ast}(n) = 1$, but $P$ is a periodic point of primitive period strictly less than $n$ if $\Char{K} \mid n$. For example, consider $\Char{K} = 3$, $\dim{X}=2$, and $\phi:X \to X$ defined near a fixed point $P$ as
            \begin{align*}
                \phi_1(x_1,x_2) &= x_1 + x_1^2 + x_1x_2 \\
                \phi_2(x_1,x_2) &= 2x_2 + x_1^2.
            \end{align*}
            Then with the monomial ordering $x_2 < x_1$, the leading term ideal is generated by $\{x_1^2,x_2\}$
            and, hence, $a_P(1) = 2$.  Iterating, we have
            \begin{align*}
                \phi^{(3)}_1(x_1,x_2) &= x_1 + x_1^3 + x_1x_2 + \text{higher order terms} \\
                \phi^{(3)}_2(x_1,x_2) &= 2x_2 + x_1^2  + \text{higher order terms}.
            \end{align*}
            Then we have the leading term ideal is generated by $\{x_1^3,x_2\}$
            and, hence, $a_P(3) = 3$.  Then computing
            \begin{equation*}
                a_P^{\ast}(3) = a_P(3) - a_P(1) = 1,
            \end{equation*}
            but $P$ is a fixed point for $\phi$.
        \end{exmp}

\section{Properties and consequences}
    Unless otherwise stated, we assume that $X$ is a non-singular, irreducible, projective variety of dimension $b$ defined over $K$ and that $\phi:X \to X$
    is a morphism defined over $K$ such that $\phi^n$ is non-degenerate.

   \subsection{Basic properties} \label{basic_properties}
       \begin{prop} \label{prop16}
            Let $m,n \geq 1$ be integers such that $\phi^{mn}$ is non-degenerate.  Then
            \begin{enumerate}
                \item \label{prop16a} If $a_P(n) >0$, then $a_P(mn)>0$ for all $m$.
                \item \label{prop16b} If $P$ is a periodic point of primitive period $n$ for $\phi$,
                    then $a^{\ast}_P(n) \neq 0$.  In particular, points of primitive
                    period $n$ are points of formal period $n$.
                \item \label{prop16c} $a_P(n) = \sum_{d \mid n} a_P^{\ast}(d)$.
                \item \label{prop16d} If $a_P(n) > 0$, then for $m$ the primitive period of $P$ for $\phi$
                    we have $a_P^{\ast}(m) >0$, for all $d < m$ we have $a_P^{\ast}(d) =0$,
                    and $a_P^{\ast}(\phi,n) = a_P^{\ast}(\phi^m,n/m)$.
            \end{enumerate}
        \end{prop}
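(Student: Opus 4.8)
The plan is to derive all four assertions from a single observation together with M\"obius inversion. Since $\phi^{mn}$ is non-degenerate, so is $\phi^d$ for every $d \mid mn$ (by the Remark following the definition of non-degeneracy), and for each such $d$ Theorem \ref{thm10} gives $a_P(d) = \dim_K\bigl(R_P/(I_{\Delta}+I_{\Gamma_d})\bigr)$. This quotient is non-zero precisely when $(P,P)$ lies on both $\Delta$ and $\Gamma_d$, that is, precisely when $\phi^d(P)=P$. Writing $m$ for the primitive period of $P$, this says
\begin{equation*}
    a_P(d) > 0 \iff m \mid d ,
\end{equation*}
which is the fact that does all the work. Statement (\ref{prop16c}) is then pure combinatorics: by definition $a_P^{\ast}(n) = \sum_{d \mid n}\mu(n/d)a_P(d)$, and M\"obius inversion gives $a_P(n) = \sum_{d \mid n} a_P^{\ast}(d)$.

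For (\ref{prop16a}): if $a_P(n)>0$ then $\phi^n(P)=P$, hence $\phi^{mn}(P) = (\phi^n)^m(P) = P$, and since $\phi^{mn}$ is non-degenerate the point $(P,P)$ lies on $\Gamma_{mn}\cap\Delta$, so $a_P(mn)>0$. For (\ref{prop16b}): if $P$ has primitive period $n$, then $a_P(d)=0$ for every proper divisor $d$ of $n$ because $n \nmid d$; hence in the alternating sum only the top term survives, $a_P^{\ast}(n) = \mu(1)a_P(n) = a_P(n) \geq 1 \neq 0$, so $P$ is a point of formal period $n$.

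For (\ref{prop16d}): assume $a_P(n)>0$ and let $m$ be the primitive period of $P$, so $m \mid n$. Applying (\ref{prop16b}) with $m$ in place of $n$ gives $a_P^{\ast}(m) = a_P(m) > 0$. If $d < m$ then every $e \mid d$ satisfies $e \leq d < m$, so $m \nmid e$ and $a_P(e)=0$; therefore $a_P^{\ast}(d) = \sum_{e \mid d}\mu(d/e)a_P(e) = 0$. Finally, in $a_P^{\ast}(\phi,n) = \sum_{d \mid n}\mu(n/d)a_P(\phi,d)$ the terms with $m \nmid d$ vanish, so substituting $d = md'$ and using $a_P(\phi,md') = i(\Delta,\Gamma_{md'};P) = a_P(\phi^m,d')$ (the graph of $(\phi^m)^{d'}$ being the graph of $\phi^{md'}$) yields $a_P^{\ast}(\phi,n) = \sum_{d' \mid (n/m)}\mu\bigl((n/m)/d'\bigr)a_P(\phi^m,d') = a_P^{\ast}(\phi^m,n/m)$; this is exactly the reduction already carried out in the proof of Theorem \ref{thm15}.

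None of these steps is a real obstacle; the only point requiring care is the equivalence $a_P(d)>0 \iff \phi^d(P)=P$, which depends on non-degeneracy (so that the intersection is proper, $a_P(d)$ is defined, and equals the $\tor_0$-length by Theorem \ref{thm10}) and on the elementary fact that $R_P/(I_{\Delta}+I_{\Gamma_d})$ is non-zero exactly when $(P,P)$ lies on both cycles. Everything after that is bookkeeping with the M\"obius function and the divisibility $m \mid n$.
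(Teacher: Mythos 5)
Your proposal is correct and follows essentially the same route as the paper: the paper also reduces everything to the observation that $a_P(d)>0$ exactly when $\phi^d(P)=P$ (equivalently $m \mid d$), uses $\phi^n(P)=P \Rightarrow \phi^{mn}(P)=P$ for (\ref{prop16a}), the vanishing of $a_P(d)$ for $d<n$ for (\ref{prop16b}), M\"obius inversion for (\ref{prop16c}), and the substitution $d=md'$ with $a_P(\phi,md')=a_P(\phi^m,d')$ for (\ref{prop16d}). The only cosmetic difference is that you front-load the equivalence $a_P(d)>0 \iff m\mid d$ and cite Theorem \ref{thm10} to justify it, whereas the paper treats it as immediate.
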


        \begin{proof}
            \mbox{}
            \begin{enumerate}
                \item The multiplicity $a_P(n)>0$ implies that $P$ is a periodic point of period $n$, and,
                hence, $\phi^{n}(P) = P$.
                    \begin{equation*}
                        \phi^{n}(P) = P \Rightarrow \phi^{nm}(P) = \phi^{n}(\phi^n(\phi^n(\cdots
                        \phi^n(P)\cdots))) (m \text{ times})
                    \end{equation*}
                    But $\phi^n(P) = P$, so then
                    $\phi^{nm}(P)=P$. Hence, $P$ is also a periodic point of period $mn$, so it has non-zero
                    multiplicity in $\Phi_{mn}(\phi)$.
                \item
                    Since $\phi^{d}(P) \neq P$ for all $d < n$, we have
                    \begin{equation*}
                        a_P(d) = 0 \quad \text{for all } d < n.
                    \end{equation*}
                    So we have that
                    \begin{equation*}
                        a_P^{\ast}(n) = \sum_{d \mid n} \mu\left(\frac{n}{d}\right)a_P(d) = a_P(n) \neq 0,
                    \end{equation*}
                    where the last inequality comes from the fact that $P$ is a periodic point of period $n$.

                \item The definition of $\Phi^{\ast}_n(\phi)$ is
                    \begin{equation*}
                        \Phi^{\ast}_n(\phi) = \sum_{P \in X} a_P^{\ast}(n) (P).
                    \end{equation*}
                    We also have
                    \begin{equation*}
                        a_P^{\ast}(n) = \sum_{d \mid n} \mu\left(\frac{n}{d}\right)a_P(n).
                    \end{equation*}
                    We can apply M\"obius inversion to get
                    \begin{equation*}
                        a_P(n) = \sum_{d \mid n} a_P^{\ast}(d),
                    \end{equation*}
                    which gives the factorization as desired.

                \item
                    The multiplicity $a_P(n)>0$ implies that $\phi^n(P)=P$ and, hence, that $P$ is a periodic point.  Consequently, $P$ has some primitive period $m \leq n$.  By (\ref{prop16b}), $m$ satisfies $a_P^{\ast}(m) >0$.  It is the minimal such value because for any $d<m$ we have that $P$ is not a periodic point of period $d$ and, hence, $a_P(d) = 0$.  So we have $a_P^{\ast}(d) = 0$ for $d < m$.  Finally, computing $a_P^{\ast}(\phi,n)$ in terms of $\phi^m$ we have
                    \begin{align*}
                        a_P^{\ast}(\phi,n) &= \sum_{d \mid n \text{ with } m \mid d}
                        \mu\left(\frac{n}{d}\right) a_P(\phi,d) \\
                        &= \sum_{d\mid(n/m)} \mu\left(\frac{n}{md}\right)a_P(\phi,md) \\
                        &= \sum_{d \mid n/m} \mu\left(\frac{B}{d}\right)a_P(\phi^m,d)\\
                        &= a_P^{\ast}(\phi^m,n/m).
                    \end{align*}
            \end{enumerate}
        \end{proof}

        In the next proposition, we summarize some of the facts about $a_P^{\ast}(n)$ in terms
        of $\Phi^{\ast}_n(\phi)$.
        \begin{prop} \label{prop18}
            Let $m,n \geq 1$ be integers with $\phi^{mn}$ non-degenerate.
            \begin{enumerate}
                \item $a_P^{\ast}(\phi,mn) \geq a_P^{\ast}(\phi^m,n)$.
                \item If $(n,m)=1$, then $\Phi^{\ast}_n(\phi^{m})=\sum_{d \mid m}
                    \Phi^{\ast}_{nd}(\phi)$.
                \item Let $m=p^e$ for some prime $p$ and $e \geq 2$.  Then
                    $\Phi^{\ast}_{np^{e}}(\phi) = \Phi^{\ast}_{np}(\phi^{p^{e-1}})$.
                \item If $n = p_1^{e_1}\cdots p_r^{e_r}$ for distinct primes $p_1,\ldots,p_r$ with
                    $e_1,\ldots,e_r \geq 2$ and \\
                    $m=p_1^{e_1-1}\cdots p_r^{e_r-1}$, then $\Phi^{\ast}_n(\phi) =
                    \Phi^{\ast}_{p_1\cdots p_r}(\phi^m)$.
            \end{enumerate}
        \end{prop}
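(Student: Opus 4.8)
The plan is to derive all four parts purely formally, from the arithmetic identities already in hand and the non-negativity in Theorem \ref{thm15}; no new intersection theory is needed. The inputs are: the elementary equality $a_P(\phi^m,d)=a_P(\phi,md)$ (both sides are the intersection multiplicity at $(P,P)$ of $\Delta$ with the graph of $\phi^{md}$), the identity $a_P(n)=\sum_{d\mid n}a_P^{\ast}(d)$ from Proposition \ref{prop16}(\ref{prop16c}), the recursions of Lemma \ref{lem9}, and Theorem \ref{thm15}. One preliminary remark makes every step legitimate: in each part, every iterate of $\phi$ that is written down has exponent dividing $mn$, so the hypothesis that $\phi^{mn}$ is non-degenerate, together with the Remark after the definition of non-degeneracy, guarantees that the cycles and multiplicities involved are defined and that Theorem \ref{thm15} applies.

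For part (1), I would expand $a_P^{\ast}(\phi^m,n)$ twice. M\"obius-inverting Proposition \ref{prop16}(\ref{prop16c}) for $\phi^m$ gives $a_P^{\ast}(\phi^m,n)=\sum_{d\mid n}\mu(n/d)\,a_P(\phi^m,d)=\sum_{d\mid n}\mu(n/d)\,a_P(\phi,md)$, and then $a_P(\phi,md)=\sum_{e\mid md}a_P^{\ast}(\phi,e)$ from Proposition \ref{prop16}(\ref{prop16c}) for $\phi$. Exchanging the order of summation, $a_P^{\ast}(\phi^m,n)=\sum_{e\ge 1}a_P^{\ast}(\phi,e)\bigl(\sum_{d\mid n,\, e\mid md}\mu(n/d)\bigr)$. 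Writing $f:=e/\gcd(e,m)$, one checks $e\mid md\iff f\mid d$, so the inner sum is empty unless $f\mid n$ and otherwise equals $\sum_{k\mid(n/f)}\mu\bigl((n/f)/k\bigr)$, which is $1$ when $f=n$ and $0$ in every other case. Hence $a_P^{\ast}(\phi^m,n)=\sum_{e:\,e/\gcd(e,m)=n}a_P^{\ast}(\phi,e)$, a sum indexed by a finite set of divisors of $mn$ that contains $e=mn$; since each summand is $\ge 0$ by Theorem \ref{thm15}, the comparison of part (1) follows at once.

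Part (2) is the M\"obius inverse of Lemma \ref{lem9}(\ref{lem9c}). When $\gcd(m,n)=1$, apply Lemma \ref{lem9}(\ref{lem9c}) with modulus $dn$ for each $d\mid m$ (permissible since $\gcd(d,n)=1$) to get $a_P^{\ast}(\phi,dn)=\sum_{e\mid d}\mu(d/e)\,a_P^{\ast}(\phi^e,n)$; thus $d\mapsto a_P^{\ast}(\phi,dn)$ on the divisors of $m$ is the M\"obius transform of $e\mapsto a_P^{\ast}(\phi^e,n)$, and inverting gives $a_P^{\ast}(\phi^m,n)=\sum_{d\mid m}a_P^{\ast}(\phi,dn)$, i.e.\ $\Phi_n^{\ast}(\phi^m)=\sum_{d\mid m}\Phi_{nd}^{\ast}(\phi)$. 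For part (3), write $n=p^{f}M$ with $p\nmid M$. By Lemma \ref{lem9}(\ref{lem9b}) applied to $\phi$ and the factorization $np^e=Mp^{\,e+f}$ (valid as $e+f\ge e\ge 2$), $\Phi^{\ast}_{np^e}(\phi)=\Phi^{\ast}_{Mp}(\phi^{p^{\,e+f-1}})$. For the right-hand side, $np=Mp^{\,f+1}$: if $f=0$ it already equals $\Phi^{\ast}_{Mp}(\phi^{p^{e-1}})=\Phi^{\ast}_{Mp}(\phi^{p^{\,e+f-1}})$, and if $f\ge 1$ then Lemma \ref{lem9}(\ref{lem9b}) applied to $\phi^{p^{e-1}}$ gives $\Phi^{\ast}_{np}(\phi^{p^{e-1}})=\Phi^{\ast}_{Mp}\bigl((\phi^{p^{e-1}})^{p^{f}}\bigr)=\Phi^{\ast}_{Mp}(\phi^{p^{\,e+f-1}})$. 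In either case both sides coincide. Part (4) then follows by induction on $r$: part (3) applied to the prime $p_1$ turns $\Phi^{\ast}_n(\phi)$ into $\Phi^{\ast}_{p_1(n/p_1^{e_1})}(\phi^{p_1^{e_1-1}})$, after which part (3) applied to $\phi^{p_1^{e_1-1}}$ and the prime $p_2$ removes $e_2$, and so on; after $r$ steps all exponents are $1$ and the map has become $\phi^{p_1^{e_1-1}\cdots p_r^{e_r-1}}=\phi^m$.

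The step requiring the most care is the inner M\"obius computation in part (1): one must verify that $e\mid md\iff (e/\gcd(e,m))\mid d$, and that after the substitution $d=(e/\gcd(e,m))k$ the surviving constraint is exactly $n/(e/\gcd(e,m))=1$, so that the only indices $e$ that contribute are those with $e/\gcd(e,m)=n$, in particular $e=mn$. In parts (3) and (4) the only delicate point is keeping the exponent of $p$ (respectively $p_i$) at least $2$ whenever Lemma \ref{lem9}(\ref{lem9b}) is invoked, which is why one splits on whether $p\mid n$.
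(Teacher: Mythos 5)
For parts (2)--(4) your route is correct and essentially the paper's: everything reduces by M\"obius inversion to the recursions of Lemma~\ref{lem9}. In fact for part (3) you go a step further than the paper does. The paper simply cites Lemma~\ref{lem9}(\ref{lem9b}), which as written covers only the case $p\nmid n$ (so that $np^{e}$ has $p$-adic valuation exactly $e$); your split on whether $p\mid n$, invoking Lemma~\ref{lem9}(\ref{lem9b}) a second time on the right-hand side, is the careful version of what the paper leaves implicit. Part (4) is the same iteration in both.

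Part (1), however, has a genuine problem that you do not flag. Your computation is correct and yields the clean identity
\begin{equation*}
a_P^{\ast}(\phi^m,n)\;=\;\sum_{\substack{e\ge 1\\ e/\gcd(e,m)=n}} a_P^{\ast}(\phi,e),
\end{equation*}
and since $e=mn$ lies in that index set while every summand is nonnegative by Theorem~\ref{thm15}, what you have actually proved is
\begin{equation*}
a_P^{\ast}(\phi^m,n)\;\ge\; a_P^{\ast}(\phi,mn).
\end{equation*}
This is the \emph{reverse} of the inequality $a_P^{\ast}(\phi,mn)\ge a_P^{\ast}(\phi^m,n)$ asserted in Proposition~\ref{prop18}(1), yet you present it as if it delivered the stated comparison. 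In fact the inequality as printed is false: take $\phi(z)=z^2$, $P=1$, $m=2$, $n=1$. Then $a_P(\phi,d)=1$ for all $d\ge 1$, so $a_P^{\ast}(\phi,2)=a_P(\phi,2)-a_P(\phi,1)=0$, whereas $a_P^{\ast}(\phi^{2},1)=a_P(\phi^{2},1)=1$, so $a_P^{\ast}(\phi,mn)<a_P^{\ast}(\phi^m,n)$. One also sees this directly from Lemma~\ref{lem9}(\ref{lem9a}), which rearranges to $a_P^{\ast}(\phi,Mp)=a_P^{\ast}(\phi^p,M)-a_P^{\ast}(\phi,M)\le a_P^{\ast}(\phi^p,M)$. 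So the paper's statement of (1) has a sign error; your argument establishes the corrected claim, but you should have noticed that the identity you derived contradicts, rather than confirms, the direction as written.
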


        \begin{proof}
            \mbox{}
            \begin{enumerate}
                \item This is clear from Lemma \ref{lem9}.
                \item We need to see that
                    \begin{equation*}
                        a_P^{\ast}(\phi^m,n) = \sum_{d \mid m}a_P^{\ast}(\phi,nd).
                    \end{equation*}
                    By the M\"{o}bius inversion formula, this is equivalent to
                    \begin{align*}
                        a_P^{\ast}(\phi,mn) = \sum_{d \mid m}
                        \mu\left(\frac{m}{d}\right)a_P^{\ast}(\phi^d,n).
                    \end{align*}
                    Computing the right-hand side, we have
                    \begin{align*}
                        \sum_{d \mid m}
                        \mu\left(\frac{m}{d}\right)a_P^{\ast}(\phi^d,n) &= \sum_{d \mid m}
                        \mu\left(\frac{m}{d}\right)\sum_{d^{\prime} \mid
                        n}\mu\left(\frac{n}{d^{\prime}}\right)a_P(\phi^d,d^{\prime})\\
                        &=\sum_{d \mid m}\sum_{d^{\prime} \mid
                        n}\mu\left(\frac{m}{d}\right)\mu\left(\frac{n}{d^{\prime}}\right)a_P(\phi,dd^{\prime})\\
                        &=\sum_{d \mid m}\sum_{d^{\prime} \mid
                        n}\mu\left(\frac{nm}{dd^{\prime}}\right)a_P(\phi,dd^{\prime})\\
                        &=\sum_{d^{\prime\prime} \mid nm}\mu\left(\frac{nm}{d^{\prime\prime}}\right)a_P(\phi,d^{\prime\prime})\\
                        &=a_P^{\ast}(\phi,nm).
                    \end{align*}

                \item This is Lemma \ref{lem9}(\ref{lem9b}).
                \item This is Lemma \ref{lem9}(\ref{lem9b}) applied to each $p_i$.
            \end{enumerate}
        \end{proof}

        \begin{prop} \label{prop17}
            $\deg(\Phi^{\ast}_n(\phi)) = \sum_{d \mid n} \mu\left(\frac{n}{d}\right) \deg(\Phi_d(\phi))$.
        \end{prop}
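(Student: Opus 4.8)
The plan is to reduce the statement to the definition of $\Phi^{\ast}_n(\phi)$ together with the elementary fact that the degree of a zero-cycle is additive. Recall from Section~\ref{sect1} that, by definition,
\begin{equation*}
\Phi^{\ast}_n(\phi) = \sum_{d \mid n} \mu\!\left(\frac{n}{d}\right) \Phi_d(\phi) = \sum_{P \in X} a_P^{\ast}(n)(P), \qquad a_P^{\ast}(n) = \sum_{d \mid n} \mu\!\left(\frac{n}{d}\right) a_P(d).
\end{equation*}
Since $\phi^n$ is non-degenerate, $\phi^d$ is non-degenerate for every $d \mid n$ (the remark following the definition of non-degeneracy), so each $\Phi_d(\phi)$ is a zero-cycle supported on finitely many points; hence every sum appearing below is finite and $\deg$ is defined on each term.

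First I would invoke the linearity of $\deg$ on the group of zero-cycles: for $Z = \sum_P c_P(P)$ one has $\deg(Z) = \sum_P c_P$, and $Z \mapsto \deg(Z)$ is a homomorphism from the group of zero-cycles on $X$ to $\Z$. Applying this homomorphism termwise to the finite integer combination $\Phi^{\ast}_n(\phi) = \sum_{d \mid n} \mu(n/d)\,\Phi_d(\phi)$ yields the claimed identity at once. Equivalently, one may simply interchange the (finite) order of summation:
\begin{equation*}
\deg(\Phi^{\ast}_n(\phi)) = \sum_{P \in X} a_P^{\ast}(n) = \sum_{P \in X} \sum_{d \mid n} \mu\!\left(\frac{n}{d}\right) a_P(d) = \sum_{d \mid n} \mu\!\left(\frac{n}{d}\right) \sum_{P \in X} a_P(d) = \sum_{d \mid n} \mu\!\left(\frac{n}{d}\right) \deg(\Phi_d(\phi)).
\end{equation*}

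There is essentially no obstacle here; the only point worth a sentence is that all sums are finite — which is precisely what non-degeneracy of $\phi^n$ guarantees — so the rearrangement is legitimate. One could alternatively present this as the cycle-degree shadow of Proposition~\ref{prop16}(\ref{prop16c}): summing the relation $a_P(n) = \sum_{d \mid n} a_P^{\ast}(d)$ over $P$ gives $\deg(\Phi_n(\phi)) = \sum_{d \mid n} \deg(\Phi^{\ast}_d(\phi))$, and M\"obius inversion on this identity returns the formula in the statement. I expect to present the direct summation-interchange argument, as it is the shortest and is self-contained.
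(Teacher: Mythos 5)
Your proof is correct and matches the paper's argument: both simply interchange the finite double sum over $P \in X$ and $d \mid n$, using that $\deg$ is linear on zero-cycles. The only difference is that you make the finiteness justification explicit (via non-degeneracy of $\phi^d$ for $d \mid n$) and offer an alternative route through Proposition~\ref{prop16}(\ref{prop16c}) plus M\"obius inversion; the paper's proof is the bare summation interchange.
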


        \begin{proof}
            Computing:
            \begin{align*}
                \deg(\Phi^{\ast}_n(\phi))
                    &=\sum_{P \in X} \sum_{d \mid n} \mu\left(\frac{n}{d}\right) i(\Gamma_d,\Delta_X;P)\\
                    &=\sum_{d \mid n} \mu\left(\frac{n}{d}\right) \sum_{P \in X}   i(\Gamma_d,\Delta_X;P)\\
                    &= \sum_{d \mid n} \mu\left(\frac{n}{d}\right) \deg(\Phi_d(\phi)).
            \end{align*}
        \end{proof}

    \subsection{Similarities to periodic Lefschetz numbers} \label{lefschetz}
        Proposition \ref{prop17} looks remarkably similar to the definition of periodic Lefschetz
        numbers.  In this section we describe the connection.

        \begin{defn}
            Following the notation of \cite{Fagella}, define $L(\phi)$ to be the
            \emph{Lefschetz number} of $\phi$.  The \emph{periodic Lefschetz number of period $n$} is then defined as
            \begin{equation*}
                l(\phi^n) = \sum_{d \mid n} \mu\left(\frac{n}{d}\right) L(\phi^{d})
            \end{equation*}

        \end{defn}

        The Lefschetz Fixed Point Theorem states that $L(\phi^n) \neq 0$ implies that
        $\phi^n$ has a fixed point, in other words, $\phi$ has a point of period $n$, but this does not
        imply that the point is of primitive period $n$.  The periodic Lefschetz numbers
        were defined to help address this situation.  Several papers, including
        \cite{Fagella, Llibre}, have studied when $l(\phi^n)\neq 0$
        implies that there exists a periodic point of primitive period $n$.
        We will address the relationship between
        $\deg(\Phi_n)$, $\deg(\Phi^{\ast}_n)$, $L(\phi^n)$, $l(\phi^n)$, and the
        existence of period points.

        \begin{defn}
            A map $\phi$ is \emph{transversal} if $a_P(1) = 1$ for fixed points $P$.
        \end{defn}

        \begin{prop} \label{prop23}
            \mbox{}
            \begin{enumerate}
                \item \label{prop23a} $\deg{\Phi_n(\phi)} \geq L(\phi^n)$.
                \item \label{prop23b} If $\phi^n$ is transversal, then
                    \begin{enumerate}
                        \item \label{prop23b1} $a_P^{\ast}(n) = 1$ if and only if $P$ is a point of primitive
                        period $n$ for $\phi$ and $a_P^{\ast}(n)=0$ otherwise.
                        \item \label{prop23b2} $\deg(\Phi_n(\phi))$ is the number of $n$-periodic points for
                        $\phi$.
                        \item \label{prop23b3} $\deg(\Phi^{\ast}_n(\phi))$ is the number of primitive $n$-periodic
                        points for $\phi$.  In particular, if $\deg(\Phi^{\ast}_n(\phi)) \neq 0$, then there
                        exists a periodic point of primitive period $n$.
                    \end{enumerate}
            \end{enumerate}
        \end{prop}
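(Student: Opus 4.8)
The plan is to reduce every assertion to the definitional identities $\deg\Phi_n(\phi)=\sum_{\phi^n(P)=P}a_P(n)$ and $\deg\Phi^\ast_n(\phi)=\sum_{P\in X}a_P^\ast(n)$, to the positivity $a_P(n)\ge 1$ for periodic points supplied by Theorem \ref{thm10}, and to the bookkeeping already available in Lemma \ref{lem10} and Proposition \ref{prop16}.

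For (\ref{prop23a}), I would note that $\deg\Phi_n(\phi)=\sum_{\phi^n(P)=P}a_P(n)$ with each $a_P(n)=\dim_K(\widehat{R_P}/I_n)\ge 1$ by Theorem \ref{thm10} applied to $\phi^n$. On the other hand, the Lefschetz trace formula identifies $L(\phi^n)=\sum_i(-1)^i\operatorname{tr}(\phi^{n\ast}\mid H^i(X))$ with the degree of the intersection class $[\Gamma_n]\cdot[\Delta]$; since $\phi^n$ is non-degenerate this class is represented by the effective zero-cycle $\Phi_n(\phi)$, so in fact $L(\phi^n)=\deg\Phi_n(\phi)$, which gives (and is stronger than) the claimed inequality. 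If one prefers to avoid the cycle-class comparison, the Lefschetz fixed point formula writes $L(\phi^n)=\sum_{\phi^n(P)=P}\iota_P$ with $\iota_P$ the local index, and for a morphism of a smooth projective variety $\iota_P$ agrees with $a_P(n)$ (over $\mathbb C$ this is the standard identification of the local holomorphic index with the intersection multiplicity), so summing again yields the inequality.

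For (\ref{prop23b1}), if $\phi^n(P)\ne P$ then $a_P^\ast(n)=0$ and $P$ is not of primitive period $n$, so assume $\phi^n(P)=P$ and let $m\mid n$ be the primitive period of $P$. Proposition \ref{prop16}(\ref{prop16d}) gives $a_P^\ast(\phi,n)=a_P^\ast(\phi^m,n/m)$, and the hypothesis that $\phi^n=(\phi^m)^{n/m}$ is transversal is unchanged by this replacement, so I may assume $m=1$, i.e.\ $P$ is a fixed point of $\phi$. The key observation is that transversality of $\phi^n$ forces $a_P(\phi,d)=1$ for every $d\mid n$: since $P$ is fixed we have $a_P(\phi,d)=a_P(\phi^d,1)\ge 1$, while Lemma \ref{lem10} applied to the map $\phi^d$ together with its iterate $\phi^n=(\phi^d)^{n/d}$ gives $a_P(\phi^d,1)\le a_P(\phi^n,1)=a_P(\phi,n)=1$. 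Hence the M\"obius sum collapses, $a_P^\ast(\phi,n)=\sum_{d\mid n}\mu(n/d)\,a_P(\phi,d)=\sum_{d\mid n}\mu(n/d)$, which is $1$ when $n=1$ and $0$ when $n>1$; unwinding the reduction, $a_P^\ast(n)=1$ precisely when $n$ equals the primitive period $m$ and is $0$ otherwise.

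Parts (\ref{prop23b2}) and (\ref{prop23b3}) are then immediate: transversality gives $a_P(n)=a_P(\phi^n,1)=1$ at every fixed point of $\phi^n$, so $\deg\Phi_n(\phi)=\sum_{\phi^n(P)=P}1$ is the number of $n$-periodic points; and by (\ref{prop23b1}), $\deg\Phi^\ast_n(\phi)=\sum_P a_P^\ast(n)$ counts exactly the points of primitive period $n$, so its nonvanishing produces such a point. The only step that is not purely formal is (\ref{prop23a}): the comparison of $\deg\Phi_n(\phi)$ with the topological Lefschetz number, which relies on the Lefschetz (trace) fixed point formula and on the positivity of the algebraic intersection multiplicities guaranteed by the $\tor$-vanishing of Theorem \ref{thm10}; everything in (\ref{prop23b}) then falls out of Proposition \ref{prop16} and the monotonicity Lemma \ref{lem10}.
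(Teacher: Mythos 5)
Your proof is correct and follows essentially the same route as the paper, reducing part (2) to the identity $a_P(\phi,d)=a_P(\phi^d,1)$ together with the monotonicity $a_P(n)\geq a_P(1)$ of Lemma \ref{lem10} and the reduction $a_P^{\ast}(\phi,n)=a_P^{\ast}(\phi^m,n/m)$ of Proposition \ref{prop16}(\ref{prop16d}). Two small remarks: in part (1) you in fact claim the \emph{equality} $L(\phi^n)=\deg\Phi_n(\phi)$, invoking the holomorphic identification of the local fixed-point index with the algebraic intersection multiplicity; this is valid over $\mathbb{C}$, but the paper deliberately asserts only the inequality, since $L(\phi)$ is imported from the topological-dynamics literature where the local Poincar\'e index can carry a sign, and the argument is meant to be robust to that setting. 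In part (2) your explicit derivation of $a_P(\phi,d)=1$ for every $d\mid n$ (applying Lemma \ref{lem10} to $\phi^d$ and its iterate $\phi^n=(\phi^d)^{n/d}$) supplies a justification that the paper states without proof, so this is a genuine small improvement in rigor rather than a different method.
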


        \begin{proof}
            \mbox{}
            \begin{enumerate}
                \item  Recall from the Lefschetz-Hopf Theorem that we may compute the Lefschetz number as
                    \begin{equation*}
                        L(\phi) = \sum_{P \in \text{Fix}(\phi)} \text{ind}(\phi,P)
                    \end{equation*}
                    where $\text{ind}(\phi,P)$ is the Poincar\'e index of $\phi$ at $P$.  So
                    $L(\phi)$ is the sum of the multiplicities of the fixed points of $\phi$ with either a negative or positive sign.

                \item
                    \begin{enumerate}
                        \item The map $\phi^n$ is transversal implies that $\phi^d$ is transversal for all $d \mid n$
                            and hence $a_P(d) = 1$ for all periodic points $P$ of period $d \mid n$.  Therefore,
                            if the primitive period of $P$ is $n$, then we have $a_P^{\ast}(n) = a_P(n) = 1$
                            since $a_P(d) = 0$ for $d < n$.

                            Assume that $P$ is a periodic point of primitive period $m \mid n$ and compute
                            \begin{equation*}
                                a_P^{\ast}(m) = a_P^{\ast}(\phi^m,1) = a_P(\phi^m,1)=1.
                            \end{equation*}
                            Since $a_P^{\ast}(m) = a_P^{\ast}(\phi^m,1)$, we may replace $\phi$ by $\phi^m$ and assume that $m=1$.  Now computing $a_P^{\ast}(n)$ we have
                            \begin{equation*}
                                a_P^{\ast}(n) = \sum_{d \mid n}\mu\left(\frac{n}{d}\right)a_P(d) =\sum_{d \mid n}\mu\left(\frac{n}{d}\right)1 =0
                            \end{equation*}
                            by properties of the M\"obius function.

                        \item Properties (\ref{prop23b2}) and (\ref{prop23b3}) follow directly from the definition of transversal and (\ref{prop23b1}).
                    \end{enumerate}
            \end{enumerate}
        \end{proof}
        \begin{rem}
            Proposition \ref{prop23}(\ref{prop23b}) is similar to \cite[Theorem A]{Fagella}.
        \end{rem}

    \subsection{Applications} \label{applications}
        \begin{prop} \label{prop19}
            There are only finitely many points of primitive period $n$ for any fixed $n$
            with $\phi^n$ non-degenerate.
        \end{prop}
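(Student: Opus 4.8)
The plan is to reduce this immediately to the finiteness of the support of the zero-cycle $\Phi_n(\phi)$, which has already been established. First I would observe that any point $P$ of primitive period $n$ is in particular a periodic point of period $n$, so $\phi^n(P)=P$ and hence $(P,P)\in\Gamma_n\cap\Delta$. By the discussion in Section \ref{sect1}, the non-degeneracy of $\phi^n$ means $\Gamma_n$ and $\Delta$ intersect properly inside $X\times X$; since $\dim(X\times X)=2b$ while $\dim\Delta=\dim\Gamma_n=b$, the intersection $\Gamma_n\cap\Delta$ is $0$-dimensional, and $\Phi_n(\phi)=\sum_{P\in X}a_P(n)(P)$ is a zero-cycle with only finitely many points of non-zero multiplicity. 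Every periodic point $Q$ of period $n$ satisfies $a_Q(n)=i(\Gamma_n,\Delta;Q)>0$, so the set of all period-$n$ points is contained in this finite support. A fortiori, the subset of points of primitive period $n$ is finite.

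Alternatively, and perhaps more in keeping with the theme of the paper, I would invoke Proposition \ref{prop16}(\ref{prop16b}): a point of primitive period $n$ is a formal periodic point of formal period $n$, i.e.\ $a_P^{\ast}(n)>0$, so it lies in the support of $\Phi^{\ast}_n(\phi)$. Since $\Phi^{\ast}_n(\phi)=\sum_{d\mid n}\mu(n/d)\Phi_d(\phi)$ is a $\Z$-linear combination of the zero-cycles $\Phi_d(\phi)$ for $d\mid n$, it is itself a zero-cycle with finite support, and the claim follows.

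I do not expect any real obstacle here: the proposition is a direct corollary of the fact, already set up at the beginning of Section \ref{sect1}, that non-degeneracy forces $\Gamma_n\cap\Delta$ to be finite. The only point requiring a word of care is the passage from ``periodic point of period $n$'' to membership in the support of $\Phi_n(\phi)$, which is exactly the statement that the intersection multiplicity $a_P(n)=i(\Gamma_n,\Delta;P)$ is strictly positive whenever $(P,P)$ actually lies on both cycles; this is immediate from the definition of intersection multiplicity together with $\tor_0(R_P/I_\Delta,R_P/I_{\Gamma_n})=R_P/(I_\Delta+I_{\Gamma_n})\neq 0$ when $P$ is a period-$n$ point.
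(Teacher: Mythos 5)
Your proposal is correct, and your first argument is in fact more direct than the paper's. The paper invokes Proposition \ref{prop17} (the M\"obius degree formula) together with B\'ezout to conclude that $\deg(\Phi^{\ast}_n(\phi))$ is finite, and then implicitly uses that each primitive $n$-periodic point contributes positively to that degree; your first route bypasses $\Phi^{\ast}_n(\phi)$ entirely by observing that primitive $n$-periodic points form a subset of period-$n$ points, which lie in the finite support of the zero-cycle $\Phi_n(\phi)$ that proper intersection of $\Gamma_n$ and $\Delta$ already guarantees. This avoids any appeal to Proposition \ref{prop16}(\ref{prop16b}), the M\"obius formula, or the effectivity of $\Phi^{\ast}_n$, and it also sidesteps a small imprecision in the paper's phrasing (finiteness of a degree does not, by itself, bound the number of support points unless one also knows the multiplicities are nonnegative and each relevant point contributes at least one). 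Your alternative second argument is essentially the paper's argument, but correctly rephrased in terms of finiteness of support rather than of degree. Both are sound; the first is the one I would keep.
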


        \begin{proof}
            Fix any integer $n \geq 1$ with $\phi^n$ non-degenerate.  Proposition \ref{prop17} provides a formula for the
            degree of $\Phi^{\ast}_n(\phi)$.  Since $\phi^n$ is assumed to be non-degenerate,
            B\'{e}zout's Theorem states that $\Gamma_d$ and $\Delta$ intersect in a finite number
            of points for all $d \mid n$; in other words, $\deg(\Phi_d(\phi))$ is finite.  Hence,
            $\deg(\Phi^{\ast}_n(\phi))$ is finite, so there can only be finitely many
            primitive $n$-periodic points.
        \end{proof}

        \begin{thm} \label{thm21}
            There exists $M > 0$ such that for all $q$ prime and $q >M$,
            $\deg(\Phi^{\ast}_q(\phi)) \neq 0$ implies that there exists a periodic point with
            primitive period $q$ for $\phi$.
        \end{thm}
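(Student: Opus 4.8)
The plan is to deduce the statement from the effectivity of $\Phi^{\ast}_q(\phi)$ and the classification of admissible periods in Theorem~\ref{thm15}, together with the finiteness of the fixed-point locus of $\phi$ that comes from non-degeneracy. Since $\phi^q$ is non-degenerate by the standing hypothesis and $\Phi^{\ast}_q(\phi)$ is effective by Theorem~\ref{thm15}, the assumption $\deg(\Phi^{\ast}_q(\phi)) \neq 0$ gives $\deg(\Phi^{\ast}_q(\phi)) = \sum_P a_P^{\ast}(q) > 0$, so some $P \in X(K)$ has $a_P^{\ast}(q) > 0$. It therefore suffices to show that, for $q$ a large enough prime, any such $P$ has primitive period exactly $q$.

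First I would isolate the mechanism. If $a_P^{\ast}(q) \neq 0$ then, as at the start of the proof of Theorem~\ref{thm15}, $a_P(d) \neq 0$ for some $d \mid q$, so $P$ is periodic and its primitive period $m$ divides $q$; as $q$ is prime, $m = 1$ or $m = q$. If $m = q$ we are done, so suppose $m = 1$, i.e.\ $P$ is a fixed point of $\phi$. By Theorem~\ref{thm15}(2) applied with $m = 1$, the prime $q$ is then of the form $\lcm(r_{i_1},\ldots,r_{i_k})$ or $\lcm(r_{i_1},\ldots,r_{i_k})\,p^{e}$ with $p = \Char K$ and $e \geq 1$, where $r_{i_1},\ldots,r_{i_k}$ are finite multiplicative orders of eigenvalues of $d\phi_P$. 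The second form forces $p \mid q$, hence $q = p$. In the first form, since $q$ is prime and each $r_{i_t}$ divides $\lcm(r_{i_1},\ldots,r_{i_k}) = q$, at least one $r_{i_t}$ equals $q$; that is, $d\phi_P$ has an eigenvalue that is a primitive $q$-th root of unity.

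Next I would bound the exceptional primes. Because $\phi^1$ is non-degenerate, $\Gamma_1$ and $\Delta$ meet properly, so $\text{Fix}(\phi)$, which is cut out by $\Gamma_1 \cap \Delta$, is a finite set. For each of the finitely many $P \in \text{Fix}(\phi)$ the matrix $d\phi_P$ has at most $b$ distinct eigenvalues, each with a well-defined multiplicative order in $\N \cup \{\infty\}$. Let $M_1$ be the largest of all the \emph{finite} such orders over all $P \in \text{Fix}(\phi)$ (take $M_1 = 1$ if there are none), and put $M = \max(M_1, \Char K)$, which is just $M = M_1$ when $\Char K = 0$. If $q > M$ is prime, then the possibility $q = p$ is excluded and no $d\phi_P$ with $P \in \text{Fix}(\phi)$ has an eigenvalue of order $q$, so neither of the two forms above can occur. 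Hence $m = 1$ is impossible, forcing $m = q$: the point $P$ furnished by effectivity has primitive period $q$.

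The whole argument is essentially bookkeeping on top of Theorem~\ref{thm15}; the only delicate point — the main obstacle — is excluding the spurious possibility that $a_P^{\ast}(q) > 0$ is produced by a fixed point whose differential $d\phi_P$ happens to carry a root of unity of order $q$. This is exactly where non-degeneracy enters: it makes $\text{Fix}(\phi)$ finite, so only finitely many primes can arise in this way, and all of them are absorbed into $M$.
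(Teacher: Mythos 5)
Your proof is correct and takes essentially the same approach as the paper: both reduce to the finiteness of $\text{Fix}(\phi)$ and use the classification from Theorem~\ref{thm15} to exclude, for all but finitely many primes $q$, the possibility that a fixed point contributes to $\Phi^{\ast}_q(\phi)$. Your write-up is actually somewhat more explicit than the paper's (which argues at the level of the degree formula $\deg\Phi^{\ast}_q = \deg\Phi_q - \deg\Phi_1$ rather than at the level of an individual point $P$ with $a_P^{\ast}(q)>0$), and in particular you spell out cleanly why, when $q$ is prime, form (b) of Theorem~\ref{thm15}(2) forces some eigenvalue order $r_{i_t}$ to equal $q$ and form (c) forces $q=\Char K$ — the precise facts the paper's phrase ``only finitely many $n$'' is gesturing at.
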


        \begin{proof}
            We want to show that there exists a $P$ with $a_P^{\ast}(q) \neq 0$ that is
            a primitive $q$-periodic point.  We know that for $q$ prime we have
            \begin{equation*}
                \deg(\Phi^{\ast}_q(\phi)) = \deg(\Phi_q(\phi)) - \deg(\Phi_1(\phi)).
            \end{equation*}
            There are only finitely many fixed points for $\phi$ by Proposition \ref{prop19},
            and for each fixed point only finitely many $n$ relatively prime to the characteristic of $K$ such that $a_P(n) > a_P(1)$ by Theorem \ref{thm15}.  Hence, after excluding those finitely many numbers (including the characteristic of $K$), each time $\deg(\Phi_q(\phi)) > \deg(\Phi_1(\phi))$
            the additional degree comes from at least one periodic point of primitive
            period $q$.
        \end{proof}

        \begin{cor} \label{cor11}
            If there are infinitely many $n \in \mathbb{Z}^{+}$ such that
            $\deg(\Phi^{\ast}_n(\phi)) \neq 0$ for $n \neq 0$ in $K$ and $\phi^n$ is
            non-degenerate, then there exists $P \in X$ with an arbitrarily large primitive
            period for $\phi$, and $\phi$ has infinitely many periodic points.
        \end{cor}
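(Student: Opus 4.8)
The plan is to argue by contradiction, via the observation that the whole statement follows once one rules out a uniform bound on primitive periods. First I would note that if the primitive periods of the periodic points of $\phi$ are unbounded, then I may choose periodic points $P_1, P_2, \dots$ with strictly increasing primitive periods $n_1 < n_2 < \cdots$; these points are pairwise distinct because they have distinct primitive periods, so $\phi$ automatically has infinitely many periodic points and points of arbitrarily large primitive period. Hence it suffices to show that no integer $B$ can bound all primitive periods of $\phi$.

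So I would assume toward a contradiction that every periodic point of $\phi$ has primitive period at most $B$, and let $\mathcal{N}$ be the set of integers $n \geq 1$ with $n \neq 0$ in $K$, $\phi^n$ non-degenerate, and $\deg(\Phi^{\ast}_n(\phi)) \neq 0$; by hypothesis $\mathcal{N}$ is infinite. For each $n \in \mathcal{N}$, since $\deg(\Phi^{\ast}_n(\phi)) = \sum_{P \in X} a_P^{\ast}(n)$ and every summand is $\geq 0$ by Theorem \ref{thm15}(1), I can fix a witness $P = P(n)$ with $a_P^{\ast}(n) \geq 1$; by Proposition \ref{prop16}(\ref{prop16d}) this $P$ is periodic, with primitive period $m = m(n)$ dividing $n$ and satisfying $m \leq B$, and $\phi^m$ is non-degenerate because $m \mid n$ and $\phi^n$ is.

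The heart of the argument is then to bound the number of $n$ that can arise. There are at most $B$ possible values of $m$, and for each value that actually occurs $\phi^m$ is non-degenerate, so Proposition \ref{prop19} supplies only finitely many points of primitive period $m$; thus the witness set $\mathcal{P} = \{P(n) : n \in \mathcal{N}\}$ is finite. Next, for a fixed $P \in \mathcal{P}$ of primitive period $m$, let $r_1, \dots, r_l$ be the multiplicative orders in $K^{\ast}$ of the distinct eigenvalues of $d\phi^m_P$ (with $r_i = \infty$ when the eigenvalue is not a root of unity). For any $n \in \mathcal{N}$ with $P(n) = P$, the hypothesis $n \neq 0$ in $K$ forces $p = \Char(K) \nmid n$, which rules out the form $n = m\lcm(r_{i_1},\dots,r_{i_k})p^e$ of Theorem \ref{thm15}(2); the only surviving possibilities are $n = m$ and $n = m\lcm(r_{i_1},\dots,r_{i_k})$ taken over subsets of the finite orders among $r_1,\dots,r_l$ — at most $2^l$ values. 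Hence $\{n \in \mathcal{N} : P(n)=P\}$ is finite, and since $\mathcal{P}$ is finite, $\mathcal{N}$ is finite, contradicting its infinitude. Therefore no bound $B$ exists, the primitive periods are unbounded, and the first paragraph finishes the proof.

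The only delicate point, rather than a real obstacle, is the use of the hypothesis $n \neq 0$ in $K$: it is precisely what eliminates the $p^e$-case of Theorem \ref{thm15}, and without it the conclusion genuinely fails in positive characteristic, since a single fixed point whose differential has a unit eigenvalue can satisfy $a_P^{\ast}(p^e) \neq 0$ for every $e \geq 1$. Everything else is bookkeeping with Theorem \ref{thm15}, Proposition \ref{prop16}, and Proposition \ref{prop19}.
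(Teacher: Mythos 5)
Your proof is correct, and it is genuinely different from the paper's argument. The paper's proof of Corollary~\ref{cor11} is two sentences: it asserts that the hypothesis yields infinitely many \emph{primes} $q$ with $\deg(\Phi^{\ast}_q(\phi)) \neq 0$, then applies Theorem~\ref{thm21} to produce infinitely many primes $q$ with a primitive $q$-periodic point. The step from ``infinitely many $n$'' (the stated hypothesis) to ``infinitely many primes $q$'' is not justified in the paper, and it is not immediate; your contradiction argument is precisely what is needed to close this gap. Concretely, you suppose all primitive periods are bounded by $B$, use Proposition~\ref{prop19} to conclude there are only finitely many possible witness points $P$, and use Theorem~\ref{thm15}(2) (together with the hypothesis $n \neq 0$ in $K$, which kills the $p^e$-form) to bound, for each such $P$, the set of $n$ for which $a_P^{\ast}(n) \geq 1$; this forces $\deg(\Phi^{\ast}_n(\phi)) \neq 0$ for only finitely many relevant $n$, a contradiction. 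What each approach buys: the paper's is shorter and reuses Theorem~\ref{thm21} at face value, while yours is self-contained, avoids the reduction to prime moduli, and in fact repairs what reads as an unjustified inference in the paper. One cosmetic point: you cite Proposition~\ref{prop16}(\ref{prop16d}) to conclude that $a_P^{\ast}(n) \geq 1$ makes $P$ periodic, but that item is stated for $a_P(n) > 0$; the intended chain is that $a_P^{\ast}(n) > 0$ forces $a_P(n) = \sum_{d\mid n} a_P^{\ast}(d) > 0$ via Proposition~\ref{prop16}(\ref{prop16c}) and Theorem~\ref{thm15}(1), or simply that some $a_P(d)$ in the M\"{o}bius sum must be nonzero. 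This is a citation nicety, not a gap.
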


        \begin{proof}
            By assumption, we have infinitely many primes $q$ with $\deg(\Phi^{\ast}_q(\phi))\neq
            0$.  Applying Theorem \ref{thm21}, we then have infinitely many primes $q$
            with a periodic point of primitive period $q$.
        \end{proof}

        \begin{rem}
            Corollary \ref{cor11} appears to be similar to applications of periodic Lefschetz numbers such as those in \cite{Dold, Fagella}.
        \end{rem}

        \begin{thm} \label{thm22}
            If $P$ is a fixed point of $\phi$, then the sequence
            \begin{equation*}
                \mathop{\{a_P(n)\}_{n \in \mathbb{N}}}_{\hspace{52pt} \Char{K} \nmid n}
            \end{equation*}
            is bounded.
        \end{thm}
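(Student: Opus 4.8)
The plan is to combine the multiplicative identity $a_P(n)=\sum_{d\mid n}a_P^{\ast}(d)$ of Proposition~\ref{prop16}(\ref{prop16c}) with the classification in Theorem~\ref{thm15} of the integers $d$ for which $a_P^{\ast}(d)$ can be non-zero, using the hypothesis $\Char K\nmid n$ precisely to rule out the infinite family of ``$p^{e}$''-type contributions.

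Write $p=\Char K$. Since $P$ is a fixed point of $\phi$, its primitive period is $m=1$, so Theorem~\ref{thm15}, applied with $m=1$, tells us that whenever $\phi^{d}$ is non-degenerate and $a_P^{\ast}(d)\neq 0$ the integer $d$ is either $1$, or $\lcm(r_{i_1},\dots,r_{i_k})$ for some non-empty subset $\{i_1,\dots,i_k\}$ of the indices $i$ with $r_i$ finite and $r_i\neq 1$, or $\lcm(r_{i_1},\dots,r_{i_k})p^{e}$ for some $e\geq 1$. Let
\begin{equation*}
    S=\{1\}\cup\bigl\{\lcm(r_{i_1},\dots,r_{i_k}):\varnothing\neq\{i_1,\dots,i_k\}\subseteq\{i:r_i<\infty,\ r_i\neq 1\}\bigr\}.
\end{equation*}
Since there are at most $2^{l}$ such subsets, $S$ is a finite set of positive integers. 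Set $B=\sum_{d\in S}a_P^{\ast}(d)$; each term is non-negative by Theorem~\ref{thm15}, and finite because $a_P^{\ast}(d)=\sum_{e\mid d}\mu(d/e)a_P(e)$ is a $\mathbb{Z}$-linear combination of the finite intersection numbers $a_P(e)$ (these make sense as $\phi^{n}$ non-degenerate forces $\phi^{e}$ non-degenerate for $e\mid d\mid n$). Thus $B<\infty$ depends only on $P$ and $\phi$.

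Now let $n\in\mathbb{N}$ satisfy $p\nmid n$ (with $\phi^{n}$ non-degenerate, as standing). For every divisor $d$ of $n$ we have $p\nmid d$ and $\phi^{d}$ non-degenerate, so if $a_P^{\ast}(d)\neq 0$ then the third form above is impossible (it would force $p\mid d$) and hence $d\in S$. Therefore, by Proposition~\ref{prop16}(\ref{prop16c}),
\begin{equation*}
    a_P(n)=\sum_{d\mid n}a_P^{\ast}(d)=\sum_{\substack{d\mid n\\ d\in S}}a_P^{\ast}(d)\leq\sum_{d\in S}a_P^{\ast}(d)=B,
\end{equation*}
a bound independent of $n$, which is the assertion.

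The argument is essentially bookkeeping on top of the structure results of Section~\ref{sect1}, so I expect no real obstacle; the one point worth stressing is that the coprimality hypothesis $p\nmid n$ is essential and is used exactly to discard the infinite family $n=\lcm(r_{i_1},\dots,r_{i_k})p^{e}$ — precisely the family responsible for the genuine unboundedness of $a_P(n)$ when $p\mid n$ is permitted.
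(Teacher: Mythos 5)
Your proof is correct, and it follows the same basic strategy as the paper---invoke Theorem~\ref{thm15} to constrain the set of $d$ with $a_P^{\ast}(d)\neq 0$, then bound $a_P(n)$ via the inversion formula $a_P(n)=\sum_{d\mid n}a_P^{\ast}(d)$ of Proposition~\ref{prop16}(\ref{prop16c})---but your write-up is actually more careful than the paper's. The paper's two-line argument asserts that ``$a_P(n)\neq a_P(1)$ for only finitely many $n$ with $\Char K\nmid n$,'' which, read literally, is false: if some $r_i$ is finite and $\neq 1$, then $a_P(n)>a_P(1)$ for the infinitely many $n$ divisible by $r_i$. The boundedness does not come from $a_P(n)$ equalling $a_P(1)$ cofinitely; it comes from the fact that the increase over $a_P(1)$ is always a subsum of the finitely many non-zero $a_P^{\ast}(d)$ with $p\nmid d$, which is exactly the bookkeeping you carry out with your finite set $S$ and the bound $B=\sum_{d\in S}a_P^{\ast}(d)$. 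So your proposal is a corrected, fleshed-out version of the intended argument rather than a genuinely different route; the one thing you might add for completeness is a sentence noting that the non-degeneracy of $\phi^n$ (implicit in the sequence being defined) propagates to all $\phi^d$ with $d\mid n$, which you in fact already mention.
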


        \begin{proof}
            From Theorem \ref{thm15} we have that for a fixed point $P$ for $\phi$, $a_P(n)
            \neq a_P(1)$ for only finitely many $n$ with $\Char{K}\nmid n$.
            Hence the sequence must be bounded.
        \end{proof}

        \begin{cor} \label{cor10}
            If $\deg(\Phi_n(\phi))$ is unbounded for $\Char{K} \nmid n$, then there are infinitely
            many periodic points for $\phi$ and, hence, periodic points with arbitrarily large primitive periods.
        \end{cor}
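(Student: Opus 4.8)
The plan is to prove this by contradiction, showing that if $\phi$ had only finitely many periodic points then $\deg(\Phi_n(\phi))$ would be bounded over all $n$ with $\Char{K}\nmid n$. So suppose $\text{Per}(\phi)=\{P_1,\dots,P_k\}$ is finite, with $P_i$ of primitive period $m_i$. Since $a_P(n)=i(\Gamma_n,\Delta;P)\geq 0$ for every $P$ (Theorem \ref{thm10}) and $a_P(n)>0$ only when $\phi^n(P)=P$, for any $n$ with $\Char{K}\nmid n$ we have
\begin{equation*}
    \deg(\Phi_n(\phi))=\sum_{P\in X}a_P(n)=\sum_{i\,:\,m_i\mid n}a_{P_i}(\phi,n).
\end{equation*}
For each $i$ with $m_i\mid n$, the cycle $\Gamma_n$ is the graph of $(\phi^{m_i})^{n/m_i}$, so by definition $a_{P_i}(\phi,n)=a_{P_i}(\phi^{m_i},n/m_i)$; moreover $P_i$ is a fixed point of $\phi^{m_i}$, and $\Char{K}\nmid n$ forces $\Char{K}\nmid(n/m_i)$ since $(n/m_i)\mid n$.

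Next I would apply Theorem \ref{thm22} to the morphism $\phi^{m_i}$ and its fixed point $P_i$: the sequence $\{a_{P_i}(\phi^{m_i},\ell)\}$ with $\Char{K}\nmid\ell$ is bounded, say by $B_i$. Hence $a_{P_i}(\phi,n)\leq B_i$ whenever $m_i\mid n$ and $\Char{K}\nmid n$, and therefore $\deg(\Phi_n(\phi))\leq B_1+\dots+B_k$ for all such $n$, contradicting the assumed unboundedness. It follows that $\phi$ has infinitely many periodic points. For the remaining assertion, suppose the primitive periods occurring among points of $\text{Per}(\phi)$ were bounded by some $N$. By Proposition \ref{prop19} there are only finitely many points of primitive period $n$ for each fixed $n$ (using that $\phi^n$ is non-degenerate), so $\text{Per}(\phi)$ would be a finite union, over $n\leq N$, of finite sets, hence finite --- contrary to what we have just proved. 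Therefore $\phi$ has periodic points of arbitrarily large primitive period.

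No M\"obius bookkeeping enters, so the argument is short; the one place that needs care is the passage from $\phi$ to $\phi^{m_i}$, where one must confirm that the standing non-degeneracy hypothesis remains in force for the relevant iterates of $\phi^{m_i}$ and that the condition $\Char{K}\nmid\,\cdot\,$ is preserved under $n\mapsto n/m_i$, so that Theorem \ref{thm22} genuinely applies to each fixed point $P_i$ of $\phi^{m_i}$. I do not expect an obstacle beyond this routine verification.
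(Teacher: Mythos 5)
Your proof is correct, but it takes a genuinely different route from the paper's. The paper argues directly over primes: it considers $\deg(\Phi_q(\phi))$ for primes $q\neq\Char K$, observes that only fixed points and primitive $q$-periodic points can contribute to this degree, invokes Theorem \ref{thm22} to bound the fixed-point contributions uniformly, and concludes that the unboundedness must be accounted for by primitive $q$-periodic points for infinitely many $q$. You instead argue by contradiction over \emph{all} $n$ with $\Char K\nmid n$: assuming $\operatorname{Per}(\phi)=\{P_1,\ldots,P_k\}$ is finite, you pass to $\phi^{m_i}$ at each $P_i$, apply Theorem \ref{thm22} to each of these auxiliary morphisms and their fixed points, and bound $\deg(\Phi_n(\phi))$ by $B_1+\cdots+B_k$; then you finish the ``arbitrarily large primitive period'' claim with Proposition \ref{prop19} rather than observing that the primes $q$ themselves witness it. Your version has the minor advantage of not needing to restrict attention to prime $n$ (the paper's step from ``$\deg(\Phi_n)$ unbounded for $\Char K\nmid n$'' to ``$\deg(\Phi_q)$ unbounded over primes $q$'' is left implicit), at the cost of the extra bookkeeping of passing from $\phi$ to $\phi^{m_i}$ at each periodic point; your care in verifying that $\Char K\nmid n$ implies $\Char K\nmid(n/m_i)$ and that the non-degeneracy hypothesis persists under this passage is exactly the verification required, and it goes through as you expect.
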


        \begin{proof}
            Consider the prime numbers $q \in \mathbb{Z}$ with $q \neq \Char{K}$.  We know that
            $\deg(\Phi_q(\phi))$ is
            unbounded, and the only contributions come from fixed points or points of primitive
            period $q$.  Since the sequence $a_P(q)$ is bounded for all fixed points $P$, there
            must be contributions to $\deg(\Phi_q(\phi))$ from periodic points of primitive
            period $q$ for infinitely many primes $q$.
        \end{proof}

        \begin{rem}
            Theorem \ref{thm22} and Corollary \ref{cor10} are similar to \cite{Shub}.
        \end{rem}

    \subsection{Wehler K3 surfaces} \label{wehler}
        A Wehler K3 surface $S \subset \mathbb{P}^{2} \times \mathbb{P}^{2}$ is
        a smooth surface given by the intersection of an effective divisor of degree (1,1) and an effective divisor of degree (2,2).  Wehler \cite[Theorem 2.9]{Wehler} shows that
        these surfaces have an infinite automorphism group, from which we have
        dynamical systems. These dynamical systems were studied in \cite{Silverman4, Silverman5}.

        \begin{thm} \label{thm19}
            Dynamical systems on Wehlers K3 surfaces have points with arbitrarily large primitive period and
            infinitely many periodic points.  In particular, there exists a constant $M$
            such that for all primes $q > M$ there exists a periodic point of primitive
            period $q$.
        \end{thm}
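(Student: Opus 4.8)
The plan is to reduce the statement to Corollary~\ref{cor10} and Theorem~\ref{thm21}. Write $\phi=\sigma_1\circ\sigma_2$, where $\sigma_1,\sigma_2$ are the deck involutions of the two degree-two projections $p_i\colon S\to\P^2$; by Wehler \cite[Theorem 2.9]{Wehler} (see also \cite{Silverman4,Silverman5}) this $\phi$ is an automorphism of infinite order. First I would verify that $\phi^n$ is non-degenerate for every $n$, that is, that $\Gamma_n$ and $\Delta$ meet properly in $S\times S$; this is the same as $\operatorname{Fix}(\phi^n)$ being finite, and it follows once one knows that $\phi^n$ has no fixed curve. For a general Wehler surface $\operatorname{NS}(S)=\Z D_1\oplus\Z D_2$ with $D_i=p_i^{\ast}H$, and the matrix of $\phi^{\ast}$ computed below has irrational eigenvalues, hence no eigenvector in $\operatorname{NS}(S)\otimes\Q$; so no divisor class is fixed by any $\phi^{n\ast}$, there is no $\phi^n$-invariant curve, and $\operatorname{Fix}(\phi^n)$ is finite. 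Granting this, Corollary~\ref{cor10} reduces the whole theorem to showing that $\deg(\Phi_n(\phi))$ is unbounded over $n$ with $\Char K\nmid n$; the last sentence of the theorem is then Theorem~\ref{thm21} applied to the primes, since for a prime $q$ one has $\deg(\Phi^{\ast}_q(\phi))=\deg(\Phi_q(\phi))-\deg(\Phi_1(\phi))$, which is positive as soon as $\deg(\Phi_q(\phi))>\deg(\Phi_1(\phi))$.

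To compute $\deg(\Phi_n(\phi))=\Gamma_n\cdot_{S\times S}\Delta$ I would use the Lefschetz trace formula in $\ell$-adic \'etale cohomology ($\ell\neq\Char K$): because the cycle class map respects intersection products and the K\"unneth decomposition, $\Gamma_n\cdot\Delta=\sum_{i=0}^{4}(-1)^i\operatorname{tr}\bigl((\phi^n)^{\ast}\mid H^i(S,\Q_\ell)\bigr)$. For a K3 surface $H^1=H^3=0$, while $H^0$ and $H^4$ each contribute $1$ because $\phi$ is an automorphism, so
\[
  \deg(\Phi_n(\phi))=2+\operatorname{tr}\bigl((\phi^n)^{\ast}\mid H^2\bigr).
\]
The plane $V=\Q_\ell D_1\oplus\Q_\ell D_2\subset H^2(S,\Q_\ell)$ is $\phi^{\ast}$-stable. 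From $[S]=(1,1)\cdot(2,2)$ in the Chow ring of $\P^2\times\P^2$ one computes $D_i^2=2$ and $D_1\cdot D_2=4$; since each $\sigma_i^{\ast}$ is an involution fixing $D_i$ and preserving the intersection form, this forces $\sigma_1^{\ast}D_2=4D_1-D_2$ and $\sigma_2^{\ast}D_1=4D_2-D_1$, so that in the basis $(D_1,D_2)$
\[
  \phi^{\ast}|_V=\sigma_2^{\ast}\sigma_1^{\ast}=\begin{pmatrix}-1&-4\\4&15\end{pmatrix},
\]
with characteristic polynomial $t^2-14t+1$ and eigenvalues $\alpha=7+4\sqrt3$ and $\alpha^{-1}$. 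Hence $\operatorname{tr}\bigl((\phi^n)^{\ast}\mid V\bigr)=\alpha^{n}+\alpha^{-n}$, which already tends to infinity.

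The remaining point, which I expect to be the main obstacle, is to ensure that the orthogonal complement $V^{\perp}\subset H^2$ contributes no cancelling term, i.e.\ that $\operatorname{tr}\bigl((\phi^n)^{\ast}\mid V^{\perp}\bigr)$ is bounded in $n$; equivalently, that every eigenvalue of $\phi^{\ast}$ on $H^2$ other than $\alpha^{\pm1}$ lies on the unit circle (a Salem-type statement). Over a field of characteristic $0$ this follows from the Hodge index theorem: the cup-product form on $H^2$ has signature $(3,19)$, $V$ already carries the one hyperbolic summand, and an automorphism of a complex K3 surface acts on $H^{2,0}$ through a root of unity; hence $\phi^{\ast}$ has finite order on $V^{\perp}$ and its trace there is bounded. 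For an arbitrary algebraically closed $K$ one reduces to this case by lifting $S$ together with the two algebraic involutions $\sigma_i$ to characteristic $0$ and using that the proper intersection number $\deg(\Phi_n(\phi))$ is constant in such a family. With this established, $\deg(\Phi_n(\phi))=\alpha^{n}+\alpha^{-n}+O(1)\to\infty$, in particular along the integers $n$ coprime to $\Char K$, and Corollary~\ref{cor10} together with Theorem~\ref{thm21} yields the theorem.
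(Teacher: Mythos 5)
Your overall strategy is sound and matches the paper's at the top level (show $\deg(\Phi_n(\phi))\to\infty$, then invoke Corollary~\ref{cor10} and Theorem~\ref{thm21}), and your computation of $\phi^{\ast}$ on $\operatorname{Span}(D_1,D_2)$ is correct: $\sigma_1^{\ast}D_2=4D_1-D_2$, $\sigma_2^{\ast}D_1=4D_2-D_1$, $\phi^{\ast}|_V$ has characteristic polynomial $t^2-14t+1$ and eigenvalues $7\pm4\sqrt3=(2\pm\sqrt3)^2$, which indeed reproduces the dominant part of the Lefschetz number. However, you take a much longer road than the paper, and the part you yourself flag as ``the main obstacle'' is in fact a genuine gap in your writeup: bounding $\operatorname{tr}\bigl((\phi^{n})^{\ast}\mid V^{\perp}\bigr)$. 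Your sketch (Hodge index theorem plus finite order of the action on $H^{2,0}$, then a lift to characteristic $0$) is the right idea, but it is a Salem-type spectral statement for K3 automorphisms whose justification --- especially the reduction from arbitrary algebraically closed $K$ to characteristic $0$ while preserving the intersection numbers $\deg(\Phi_n)$ --- is substantial and not supplied. Without this, the eigenvalues of $\phi^{\ast}$ on $V^{\perp}$ could a priori cancel the growth coming from $\alpha^n+\alpha^{-n}$, so the unboundedness of $\deg(\Phi_n(\phi))$ is not established.

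The paper sidesteps all of this. It simply quotes from \cite[page 358]{Silverman5} the closed-form Lefschetz number
\[
L(\phi^k)=(2+\sqrt3)^{2k}+(2+\sqrt3)^{-2k}+22,
\]
which already packages the $V^{\perp}$ contribution as the constant $22$. It then uses only the inequality $\deg(\Phi_k(\phi))\geq L(\phi^k)$ of Proposition~\ref{prop23}(\ref{prop23a}) rather than your asserted equality (the paper is deliberately conservative here; the equality is true for proper algebraic intersections but requires a comparison of local intersection multiplicities with local Lefschetz indices that the paper does not develop), and concludes $\deg(\Phi_k(\phi))\geq2^{2k}\to\infty$, after which Corollary~\ref{cor10} and Theorem~\ref{thm21} finish exactly as you propose. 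So your reduction and final two steps are the same as the paper's, but your proof of unboundedness re-derives only part of the quoted formula and leaves the crucial spectral bound on $V^{\perp}$ unproved; the paper's appeal to \cite{Silverman5} is what makes the argument short and complete.
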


        \begin{proof}
            From \cite[page 358]{Silverman5} we know that the Lefschetz numbers of the maps
            $\phi^k = (\sigma_1 \circ \sigma_2)^k$ are given by
            \begin{equation*}
                L(\phi^k) = (2+\sqrt{3})^{2k}  + (2+\sqrt{3})^{-2k} + 22.
            \end{equation*}
            So we have
            \begin{equation} \label{eq58}
                L(\phi^k) \geq 2^{2k}.
            \end{equation}
            By Proposition \ref{prop23}(\ref{prop23a})
            \begin{equation}\label{eq57}
                \deg(\Phi_k(\phi)) \geq L(\phi^k),
            \end{equation}
            hence, we have that $\deg(\Phi_k(\phi))$ is unbounded as $k$ increases.  Applying
            Corollary \ref{cor10}, we have the result.

            To show the second portion, recall that
            \begin{equation*}
                \deg(\Phi^{\ast}_q(\phi)) = \deg(\Phi_q(\phi)) - \deg(\Phi_1(\phi)).
            \end{equation*}
            In other words, whenever $\deg(\Phi_q(\phi)) > \deg(\Phi_1(\phi))$
            we have $\deg(\Phi^{\ast}_q(\phi)) \neq 0$.  Combining (\ref{eq58})
            and (\ref{eq57}), we have that for $k$ larger than some constant $C$ we have
            $\deg(\Phi^{\ast}_q(\phi)) \neq 0$.  Applying Theorem \ref{thm21} now gives the
            desired result.
        \end{proof}

        \begin{defn}
            Let $S$ be a Wehler K3 surface and let $\mathcal{A}$ be the subgroup of the automorphism group of $S$
            generated by $\sigma_1$ and $\sigma_2$.  Let $B_k \subset \mathcal{A}$ be the cyclic subgroup generated by
            $\phi^k = (\sigma_1\circ \sigma_2)^k$.
            Let $\mathcal{A}_P = \{\phi \in \mathcal{A} \mid \phi(P) =P\}$.
            Let $S[B] = \{P \in S(K) \mid \mathcal{A}_P = B\}$. Recall that we are
            assuming $K$ is algebraically closed.
        \end{defn}
        The following proposition addresses a remark of Silverman from \cite[page 358]{Silverman5}.

        \begin{prop}
            $\#S[B_q] \to \infty$ as $q \to \infty$ for $q$ prime.
        \end{prop}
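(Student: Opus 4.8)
The plan is to show that, among the points of primitive period $q$, only a negligible proportion can have stabilizer in $\mathcal{A}$ strictly larger than $B_q$. Write $C_i=\text{Fix}(\sigma_i)$. Since $\mathcal{A}$ is infinite and generated by the two involutions $\sigma_1,\sigma_2$, it is the infinite dihedral group; $\langle\phi\rangle$ (with $\phi=\sigma_1\circ\sigma_2$) is its rotation subgroup of index $2$, and every involution of $\mathcal{A}$ is conjugate by a power of $\phi$ to $\sigma_1$ or to $\sigma_2$. If $P$ has primitive period $q$ then $\mathcal{A}_P\cap\langle\phi\rangle=B_q$, so $\mathcal{A}_P$ is either $B_q$ (whence $P\in S[B_q]$) or a dihedral group $\langle\phi^q,\rho\rangle$ with $\rho$ an involution, in which case $P\in\text{Fix}(\rho)=\phi^m(C_i)$ for some $m\in\Z$ and $i\in\{1,2\}$. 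The set of points of primitive period $q$ not lying in $S[B_q]$ is $\phi$-invariant and contained in $\bigcup_m\phi^m(C_1\cup C_2)$; moreover, using $\phi^a\sigma_1=\sigma_1\phi^{-a}$ one checks that for such a point the congruence locating the orbit point fixed by $\sigma_1$ has a unique solution mod $q$, so each $\phi$-orbit of such a point contains exactly one point of $C_1$. Hence the number of points of primitive period $q$ outside $S[B_q]$ equals $q$ times the number of periodic points of period $q$ on $C_1$.

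The key step is to bound the latter. For $P\in C_1$, since $\sigma_1\phi^q\sigma_1=(\sigma_1\phi\sigma_1)^q=\phi^{-q}$, one has $\phi^q(P)=P$ iff $(\sigma_1\phi^q)(P)=\sigma_1(P)=P$; thus $\text{Fix}(\phi^q)\cap C_1=C_1\cap\text{Fix}(\sigma_1\phi^q)$. As $q$ is odd, $\sigma_1\phi^q=\phi^{-(q-1)/2}\sigma_2\phi^{(q-1)/2}$, so $\text{Fix}(\sigma_1\phi^q)=\phi^{-(q-1)/2}(C_2)$, giving $\text{Fix}(\phi^q)\cap C_1=C_1\cap\phi^{-(q-1)/2}(C_2)$. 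These two curves have no common component, for such a component would lie in $\text{Fix}(\sigma_1)\cap\text{Fix}(\sigma_1\phi^q)\subseteq\text{Fix}(\phi^q)$, contradicting non-degeneracy of $\phi^q$; so the number of points in the intersection is at most $[C_1]\cdot[\phi^{-(q-1)/2}(C_2)]$. Now $\phi^*$ acts on $H^2(S)$ with spectral radius $(2+\sqrt3)^2$ — this is exactly what produces the leading term of the Lefschetz numbers $L(\phi^k)$ used in the proof of Theorem \ref{thm19} — so $[\phi^{-(q-1)/2}(C_2)]=(\phi^*)^{(q-1)/2}[C_2]$ has size $O\!\left((2+\sqrt3)^{q-1}\right)$, whence $[C_1]\cdot[\phi^{-(q-1)/2}(C_2)]=O\!\left((2+\sqrt3)^{q}\right)$. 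Combining with the first paragraph, the number of points of primitive period $q$ outside $S[B_q]$ is $O\!\left(q(2+\sqrt3)^{q}\right)$.

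Finally this is dominated by the total. From the proof of Theorem \ref{thm19}, $\deg\Phi^{\ast}_q(\phi)=\deg\Phi_q(\phi)-\deg\Phi_1(\phi)\ge L(\phi^q)-\deg\Phi_1(\phi)\ge(2+\sqrt3)^{2q}-\deg\Phi_1(\phi)$, and by Theorem \ref{thm15} (with $\Char K=0$), for all but finitely many primes $q$ the only points $P$ with $a^{\ast}_P(q)>0$ are those of primitive period $q$. If $\Phi^{\ast}_q(\phi)$ is reduced for such $q$ — equivalently, if $\phi$ has no parabolic periodic orbit, which holds for a general Wehler surface — then the number of points of primitive period $q$ equals $\deg\Phi^{\ast}_q(\phi)\ge(2+\sqrt3)^{2q}-\deg\Phi_1(\phi)$, so $\#S[B_q]\ge(2+\sqrt3)^{2q}-\deg\Phi_1(\phi)-O\!\left(q(2+\sqrt3)^{q}\right)\to\infty$. (Without a reducedness hypothesis one argues by contradiction: if all points of primitive period $q$ lay outside $S[B_q]$, then $\Phi^{\ast}_q(\phi)$, of degree at least $(2+\sqrt3)^{2q}-\deg\Phi_1(\phi)$, would be supported on only $O\!\left(q(2+\sqrt3)^{q}\right)$ points, forcing a periodic point whose multiplicity grows with $q$; excluding that, $S[B_q]\ne\varnothing$, and since a point of $S[B_q]$ has $\mathcal{A}$-orbit of size $[\mathcal{A}:B_q]=2q$, again $\#S[B_q]\ge 2q\to\infty$.)

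The formal inputs are routine: the dihedral structure of $\mathcal{A}$, the relation $\sigma_1\phi^q\sigma_1=\phi^{-q}$, the intersection bound, the spectral data for $\phi^*$ on $H^2(S)$ from \cite{Silverman5}, and the estimate $\deg\Phi^{\ast}_q(\phi)\ge L(\phi^q)-\deg\Phi_1(\phi)$ from Theorem \ref{thm19}. The main obstacle is controlling the multiplicities $a_P(q)$ of the points of primitive period $q$ — ruling out (or bounding) parabolic periodic orbits, where $d\phi^q_P$ is a non-trivial unipotent of $\mathrm{SL}_2$; what makes the estimate work regardless is that the points with extra symmetry are governed by $C_1\cap\phi^{-(q-1)/2}(C_2)$, where the exponent is only $(q-1)/2$, so they number $O\!\left((2+\sqrt3)^{q}\right)$ rather than $O\!\left((2+\sqrt3)^{2q}\right)$.
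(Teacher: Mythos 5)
Your argument is substantially different from, and considerably more careful than, the paper's own proof, which is a one-sentence inference: Theorem \ref{thm19} gives periodic points of prime primitive period $q$ for all large primes $q$, and the paper concludes directly that $S[B_q]$ grows. This elides exactly the distinction you isolate at the outset: a point $P$ of primitive period $q$ need not have $\mathcal{A}_P = B_q$, since $\mathcal{A}_P$ could be the larger dihedral group $\langle \phi^q, \rho\rangle$ when $P$ lies on the fixed curve of a reflection $\rho$. Your reduction (via the relation $\sigma_1\phi^q\sigma_1 = \phi^{-q}$) of the exceptional locus to $C_1 \cap \phi^{-(q-1)/2}(C_2)$ --- crucially with exponent only $(q-1)/2$, giving the bound $O\bigl((2+\sqrt 3)^q\bigr)$, half the exponent of the Lefschetz count $\sim (2+\sqrt 3)^{2q}$ --- is a genuinely sharper idea than anything in the paper's proof, and in the transverse case it yields the stronger quantitative estimate $\#S[B_q] \geq (2+\sqrt 3)^{2q} - O\bigl(q(2+\sqrt 3)^q\bigr)$.

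The remaining gap is the one you flag yourself. In the non-reduced case you need to exclude the scenario where all of $\deg\Phi^{\ast}_q(\phi) \gtrsim (2+\sqrt 3)^{2q}$ is concentrated on the $O\bigl(q(2+\sqrt 3)^q\bigr)$ exceptional points; you write ``excluding that'' but supply no argument, and none is obvious: the multiplicity $a_P(q) = a_P(\phi^q,1)$ at a parabolic primitive $q$-periodic point is a local analytic invariant with no a priori bound in $q$, and a total degree $\approx (2+\sqrt 3)^{2q}$ comfortably accommodates an orbit of $q$ points each of multiplicity $(2+\sqrt 3)^q/q$. Either a bound on $a_P(q)$ along primitive $q$-orbits, or a direct argument that $S[B_q] \neq \varnothing$ (which already forces $\#S[B_q]\geq[\mathcal{A}:B_q]=2q$), is needed to close the argument. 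To be fair, this is a gap your proposal inherits from, rather than introduces into, a step the paper's own proof does not really carry out.
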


        \begin{proof}
            From Theorem \ref{thm19} we have that there are periodic points of infinitely large
            prime primitive period and, in particular, periodic points of prime primitive period for all primes larger than some constant $M$.
            Hence, $S[B_q]$ will increase as $q$ increases.
        \end{proof}

    \subsection{Morphisms of projective space} \label{pn}
        We also apply our results to morphisms of projective space.  Let
        $\phi:\mathbb{P}^N \to \mathbb{P}^{N}$ be a morphism of degree $d$.  We need to compute
        the intersection number for $\Delta$ and $\Gamma_{\phi^n}$, which are contained in
        $\mathbb{P}^{N} \times \mathbb{P}^{N}$.

        Let $D_1$ and $D_2$ be the pullbacks in $\mathbb{P}^{N} \times \mathbb{P}^{N}$
        of a hyperplane class $D$ in $\mathbb{P}^{N}$ by the first and second projections, respectively.
        \begin{prop} \label{prop20}
            Let $\Delta$ and $\Gamma_{\phi^n}$ be defined as above.
            \begin{enumerate}
                \item \label{prop20a} The class of $\Delta$ is given by
                    \begin{equation*}
                        \sum_{j=0}^{N} D_1^{N-j}D_2^{j}.
                    \end{equation*}

                \item \label{prop20b} The class of $\Gamma_{\phi^n}$ is given by
                    \begin{equation*}
                        \sum_{j=0}^{N} d^{N-j}D_1^{N-j}D_2^{j}.
                    \end{equation*}
            \end{enumerate}
        \end{prop}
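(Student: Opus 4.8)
The plan is to compute in the Chow ring $A^\ast(\mathbb{P}^N\times\mathbb{P}^N)\cong\mathbb{Z}[D_1,D_2]/(D_1^{N+1},D_2^{N+1})$, in which the class of a point is $D_1^ND_2^N$ and $A^N(\mathbb{P}^N\times\mathbb{P}^N)=\bigoplus_{j=0}^N\mathbb{Z}\,D_1^{N-j}D_2^j$. Both $\Delta$ and $\Gamma_{\phi^n}$ are irreducible subvarieties of codimension $N$: the graph map $\iota_n\colon\mathbb{P}^N\to\mathbb{P}^N\times\mathbb{P}^N$, $x\mapsto(x,\phi^n(x))$, is a closed immersion with image $\Gamma_{\phi^n}$ (its composite with the first projection is the identity), and likewise the diagonal map $\iota_\Delta\colon x\mapsto(x,x)$. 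Hence $[\Delta]=\sum_{j=0}^Na_jD_1^{N-j}D_2^j$ and $[\Gamma_{\phi^n}]=\sum_{j=0}^Nb_jD_1^{N-j}D_2^j$ for integers $a_j,b_j$, and it remains only to determine these.

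The coefficients are extracted by pairing against complementary monomials: $D_1^{N-j}D_2^j\cdot D_1^kD_2^{N-k}$ equals $D_1^ND_2^N$ when $k=j$ and vanishes otherwise, so $a_j$ is the degree of the zero-cycle obtained by restricting $D_1^jD_2^{N-j}$ to $\Delta$, and $b_j$ that obtained by restricting it to $\Gamma_{\phi^n}$. Now $\iota_\Delta^\ast D_1=\iota_\Delta^\ast D_2=H$, the hyperplane class on $\mathbb{P}^N$, so $\iota_\Delta^\ast(D_1^jD_2^{N-j})=H^N$ has degree $1$; thus $a_j=1$ for all $j$, proving (1). For the graph, $\iota_n^\ast D_1=H$ (again since $\pi_1\circ\iota_n=\mathrm{id}$), while $\iota_n^\ast D_2=(\phi^n)^\ast H$; since $\phi$ has degree $d$ one has $(\phi^n)^\ast\mathcal{O}(1)\cong\mathcal{O}(d^n)$, whence $\iota_n^\ast D_2=d^nH$. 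Therefore $b_j=\deg\bigl(H^j(d^nH)^{N-j}\bigr)=d^{n(N-j)}$, which is the asserted formula with $d$ read as $\deg(\phi^n)$.

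An alternative and coordinate-free route, handling both parts uniformly, is to exhibit $\Gamma_{\phi^n}$ as the zero scheme of a regular section of an explicit rank-$N$ bundle. Writing $\mathbb{P}^N=\mathbb{P}(V)$, let $Q_2$ be the pullback to $\mathbb{P}^N\times\mathbb{P}^N$ of the tautological quotient bundle on the second factor; the composite $\pi_1^\ast(\phi^n)^\ast\mathcal{O}(-1)\hookrightarrow V\otimes\mathcal{O}\twoheadrightarrow Q_2$ is a global section of $\pi_1^\ast\mathcal{O}(d^n)\otimes Q_2$ vanishing at $(x,y)$ exactly when the line in $V$ corresponding to $\phi^n(x)$ lies in the one corresponding to $y$, that is, when $\phi^n(x)=y$; so its zero locus is $\Gamma_{\phi^n}$, of the expected codimension $N$. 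Hence $[\Gamma_{\phi^n}]=c_N\bigl(\pi_1^\ast\mathcal{O}(d^n)\otimes Q_2\bigr)$, and using $c_i(Q_2)=D_2^i$ together with the twisting identity $c_N(F\otimes L)=\sum_{i=0}^Nc_i(F)c_1(L)^{N-i}$ one gets $\sum_{i=0}^Nd^{n(N-i)}D_1^{N-i}D_2^i$; running the same argument with $\phi^n$ replaced by the identity recovers part (1).

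I do not anticipate a serious obstacle: the content is bookkeeping in $A^\ast(\mathbb{P}^N\times\mathbb{P}^N)$. The points requiring a little care are the standard description of that ring (the projective bundle formula applied twice), the identity $(\phi^n)^\ast\mathcal{O}(1)\cong\mathcal{O}((\deg\phi)^n)$, and, for the second proof, verifying that the displayed section is regular so that its vanishing locus represents the top Chern class; the second approach for (1) is presumably the one attributed to Jonathan Wise.
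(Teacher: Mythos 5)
Your first argument is essentially the paper's own proof: decompose the middle cohomology (or Chow group) of $\mathbb{P}^N\times\mathbb{P}^N$ by K\"{u}nneth into the span of the monomials $D_1^{N-j}D_2^j$, and extract each coefficient by pairing with the complementary monomial and pulling back along the diagonal or graph embedding. You also correctly flag that, taken literally, the coefficient for $\Gamma_{\phi^n}$ is $\deg(\phi^n)^{N-j}=d^{n(N-j)}$ rather than $d^{N-j}$; the paper's notation is loose here (it effectively relabels $\deg(\phi^n)$ as $d$, and Proposition \ref{prop22} recovers the $d^n$).

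Your second argument, realizing $\Gamma_{\phi^n}$ as the zero locus of a regular section of $\pi_1^{\ast}\mathcal{O}(d^n)\otimes Q_2$ and reading off its class as a top Chern class, is a genuinely different and self-contained route that the paper does not take; it has the advantage of producing both formulas uniformly (take $\phi^n=\mathrm{id}$ for the diagonal) and of avoiding any appeal to Poincar\'{e} duality pairings. One small correction: the acknowledgments credit Jonathan Wise with the proof of part (\ref{prop20a}), which in the paper is precisely the K\"{u}nneth--pairing argument, so the Chern-class construction is your own addition rather than the attributed one.
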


        \begin{proof}
            \mbox{}
            \begin{enumerate}
                \item By the Kunneth formula, the diagonal must be a class in
                    \begin{equation*}
                        H_N(\mathbb{P}^{N} \times \mathbb{P}^{N}) =
                        \sum_{j=0}^N H_{N-j}(\mathbb{P}^{N}) \otimes H_j(\mathbb{P}^{N}).
                    \end{equation*}

                    Now, $H_{N-j}(\mathbb{P}^{N}) \otimes H_j(\mathbb{P}^{N})$ is a 1-dimensional space
                    for all $0 \leq j \leq N$, spanned by the Poincar\'{e} dual of
                    $D_1^{N-j}D_2^j$.  We can write
                    \begin{equation*}
                        \Delta = \sum_{j=0}^{N} a_j D_1^{N-j} D_2^j.
                    \end{equation*}
                    To determine the coefficient $a_j$, we should intersect $\Delta$ with the dual of $D_1^{N-j}D_2^j$.
                    This is $D_1^jD_2^{N-j}$.
                    So let $i_{\Delta}:\Delta \hookrightarrow \mathbb{P}^{N} \times \mathbb{P}^{N}$ and compute
                    \begin{equation*}
                        (D_1^{N-j}D_2^j)\cdot(\Delta) = i_{\Delta}^{\ast}(D_1^{N-j}D_2^j)\cdot \mathbb{P}^N = D^N \cdot \mathbb{P}^{N} = 1
                    \end{equation*}
                    using the fact that $i_{\Delta}^{\ast}(D_1) = i_{\Delta}^{\ast}(D_2) = D$, a hyperplane class on $\mathbb{P}^N$.

                \item  Again, by the Kunneth formula, the graph must be a class in
                    \begin{equation*}
                        H_N(\mathbb{P}^{N} \times \mathbb{P}^{N}) =
                        \sum_{j=0}^N H_{N-j}(\mathbb{P}^{N}) \otimes H_j(\mathbb{P}^{N}),
                    \end{equation*}
                    and we can write
                    \begin{equation*}
                        \Gamma_{\phi^n} = \sum_{j=0}^{N} a_j D_1^{N-j}D_2^j.
                    \end{equation*}
                    Let $i_{\Gamma_n}:\Gamma_n \hookrightarrow \mathbb{P}^{N} \times \mathbb{P}^{N}$.
                    To determine the coefficients $a_j$, we compute
                    \begin{align*}
                        (D_1^{N-j}D_2^j)\cdot(\Gamma_{\phi^n}) &= i_{\Gamma_n}^{\ast}(D_1^{N-j}D_2^j)\cdot \mathbb{P}^N = d^{N-j}D^N \cdot \mathbb{P}^{N} = d^{N-j}
                    \end{align*}
                    using the facts that $i_{\Gamma_n}^{\ast}(D_1) = dD$, since $\phi$ is degree $d$, and $i_{\Gamma_n}^{\ast}(D_2) = D$.
            \end{enumerate}
        \end{proof}

        \begin{prop} \label{prop21}
            A morphism $\phi: \mathbb{P}^N \to  \mathbb{P}^N$ of degree $d$ has at most
            \begin{equation*}
                d^N + d^{N-1} + \cdots + d + 1
            \end{equation*}
            fixed points.
        \end{prop}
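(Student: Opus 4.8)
The plan is to recognize the fixed locus of $\phi$ as the support of the zero-cycle $\Phi_1(\phi) = \Phi_{\phi^1}(\phi)$, bound its cardinality by $\deg(\Phi_1(\phi))$, and then evaluate that degree as the intersection number $\Delta \cdot \Gamma_{\phi}$ inside $\mathbb{P}^N \times \mathbb{P}^N$ using the class formulas of Proposition \ref{prop20}.

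First I would make the reduction. A point $P \in \mathbb{P}^N$ is fixed by $\phi$ exactly when $(P,P) \in \Gamma_{\phi} \cap \Delta$. Under the standing non-degeneracy hypothesis of this section, $\Delta$ and $\Gamma_{\phi}$ intersect properly; since both have dimension $N$ inside the smooth projective variety $\mathbb{P}^N \times \mathbb{P}^N$ of dimension $2N$, the intersection $\Gamma_\phi \cap \Delta$ is a finite set of points, and at each of them the intersection multiplicity $a_P(1) = i(\Gamma_\phi,\Delta;P)$ is a positive integer (one could also invoke Theorem \ref{thm10} to identify it with $\dim_K(R_P/(I_\Delta+I_{\Gamma_1}))\ge 1$, but $a_P(1)\ge 1$ is all that is needed). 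Hence the number of fixed points is at most $\sum_P a_P(1) = \deg(\Phi_1(\phi))$, and because the ambient variety is smooth and the two cycles meet properly, $\deg(\Phi_1(\phi))$ equals the intersection product $\Delta \cdot \Gamma_\phi$ of the corresponding classes.

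Next I would carry out the computation in the cohomology (equivalently Chow) ring of $\mathbb{P}^N \times \mathbb{P}^N$, where $D_1^{N+1} = D_2^{N+1} = 0$ and $D_1^N D_2^N$ is the class of a point of degree $1$. By Proposition \ref{prop20}, $[\Delta] = \sum_{j=0}^N D_1^{N-j}D_2^{j}$ and $[\Gamma_\phi] = \sum_{i=0}^N d^{N-i} D_1^{N-i}D_2^{i}$. Multiplying, the term indexed by $(j,i)$ is $d^{N-i}\,D_1^{2N-i-j} D_2^{i+j}$, which is nonzero only when $2N-i-j \le N$ and $i+j \le N$, i.e.\ only when $i+j = N$; in that case it contributes $d^{N-i} = d^{j}$ times the point class. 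Summing over $j = 0, \dots, N$ yields $\Delta \cdot \Gamma_\phi = \sum_{j=0}^N d^{j} = d^N + d^{N-1} + \cdots + d + 1$.

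Combining the two steps, the number of fixed points of $\phi$ is at most $d^N + d^{N-1} + \cdots + d + 1$. The argument is essentially routine; the only place needing a word of care is the passage from "number of fixed points" to $\deg(\Phi_1(\phi))$, which relies on the non-degeneracy of $\phi$ (so the intersection is proper and each local multiplicity is at least $1$) together with the standard identification of $\deg(\Phi_1(\phi))$ with the product of cohomology classes on the smooth projective variety $\mathbb{P}^N\times\mathbb{P}^N$. I do not anticipate a genuine obstacle beyond keeping the bookkeeping of surviving monomials straight.
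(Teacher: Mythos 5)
Your proof is correct and follows essentially the same route as the paper: identify the fixed locus with $\Gamma_\phi \cap \Delta$, bound its cardinality by $\deg(\Phi_1(\phi)) = \Delta \cdot \Gamma_\phi$ (each local multiplicity being $\ge 1$ since the intersection is proper), and evaluate the intersection product from the class formulas of Proposition \ref{prop20}. The paper's proof is terser but makes the identical computation and concludes with the same observation that each fixed point contributes multiplicity at least $1$.
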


        \begin{proof}
            By Proposition \ref{prop20}, we compute the intersection number of
            $\Gamma_{\phi}$ and $\Delta$.
            \begin{align*}
                (\Gamma_{\phi}) \cdot(\Delta) &= \left(\sum_{j=0}^{N} D_1^{N-j}D_2^{j}\right)\cdot
                    \left(\sum_{k=0}^{N} d^{N-k}D_1^{N-k}D_2^{k}\right) \\
                    &= 0 + \sum_{j=0}^{N}d^{N-j}D_1^{N}D_2^N \\
                    &= \sum_{j=0}^{N} d^j.
            \end{align*}
            Since each fixed point has multiplicity at least 1, $\sum_{j=0}^{N} d^j$ is the maximum possible number of fixed points.
        \end{proof}

        \begin{prop} \label{prop22}
            A morphism $\phi: \mathbb{P}^N \to  \mathbb{P}^N$ of degree $d$ has
            \begin{equation*}
                \deg(\Phi_n(\phi)) = \sum_{j=0}^N (d^n)^j.
            \end{equation*}
        \end{prop}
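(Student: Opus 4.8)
The plan is to reduce this to the cycle-class computation already carried out in the proof of Proposition \ref{prop21}, with $\phi$ replaced by $\phi^n$. First I would observe that since $\phi$ has degree $d$, the iterate $\phi^n:\mathbb{P}^N\to\mathbb{P}^N$ is a morphism of degree $d^n$; this is immediate from the multiplicativity of degree under composition (the pullback of a hyperplane under $\phi^n$ is $(d^n)D$). Next, because $\phi^n$ is non-degenerate, $\Gamma_{\phi^n}$ and $\Delta$ meet properly in the $N$-dimensional variety $\mathbb{P}^N\times\mathbb{P}^N$, so $\deg(\Phi_n(\phi))=\sum_{P}i(\Gamma_{\phi^n},\Delta;P)$ coincides with the intersection number $(\Gamma_{\phi^n})\cdot(\Delta)$ computed in the cohomology (or Chow) ring of $\mathbb{P}^N\times\mathbb{P}^N$.

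Then I would invoke Proposition \ref{prop20}: applied to $\phi^n$ (degree $d^n$) it gives
\begin{equation*}
    \Delta = \sum_{j=0}^{N} D_1^{N-j}D_2^{j}, \qquad
    \Gamma_{\phi^n} = \sum_{k=0}^{N} (d^n)^{N-k}D_1^{N-k}D_2^{k}.
\end{equation*}
Now compute the product exactly as in Proposition \ref{prop21}: a monomial $D_1^{N-j}D_2^{j}\cdot D_1^{N-k}D_2^{k}$ is nonzero in $\mathbb{P}^N\times\mathbb{P}^N$ only when the exponents of $D_1$ and of $D_2$ each equal $N$, i.e. $j+k=N$, in which case it equals $D_1^ND_2^N$, which has degree $1$. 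Hence
\begin{equation*}
    (\Gamma_{\phi^n})\cdot(\Delta) = \sum_{j=0}^{N}(d^n)^{N-j} = \sum_{j=0}^{N}(d^n)^{j},
\end{equation*}
the last equality being just reindexing $j\mapsto N-j$.

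There is essentially no hard step here; the argument is a direct transcription of the proof of Proposition \ref{prop21} with $d$ replaced by $d^n$, plus the elementary remark $\deg(\phi^n)=d^n$. The only point that deserves a sentence of care is the identification of $\deg(\Phi_n(\phi))$ with the purely numerical intersection number $(\Gamma_{\phi^n})\cdot(\Delta)$, which is valid precisely because $\phi^n$ is non-degenerate (so the intersection is zero-dimensional and the local multiplicities sum to the global degree). I expect this to be the entirety of the proof.
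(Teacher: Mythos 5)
Your proposal is correct and takes essentially the same approach as the paper, which simply observes that $\phi^n$ has degree $d^n$ and applies Proposition \ref{prop21} to $\phi^n$; you spell out the cycle-class computation rather than citing it, but the content is identical.
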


        \begin{proof}
            $\phi^n$ has degree $d^n$ so we apply Proposition \ref{prop21} to $\phi^n$.
        \end{proof}

        \begin{thm} \label{thm20}
            A morphism $\phi: \mathbb{P}^N \to  \mathbb{P}^N$ of degree $d>1$ has periodic
            points with arbitrarily large primitive periods and infinitely many periodic
            points.  In particular, there exists a constant $M$ such that for all primes
            $q>M$ there exist periodic points of primitive period $q$.
        \end{thm}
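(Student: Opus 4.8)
The plan is to combine the explicit degree formula of Proposition~\ref{prop22} with Corollary~\ref{cor10} and Theorem~\ref{thm21}; the only point needing care is that the relevant cycles are defined, i.e., that $\phi^n$ is non-degenerate for every~$n$.

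First I would settle non-degeneracy. For $d>1$ the graph $\Gamma_{\phi^n}$ and the diagonal $\Delta$ automatically intersect properly: otherwise $\Gamma_{\phi^n}\cap\Delta$ would contain an irreducible curve $C\subset\mathbb{P}^N$ fixed pointwise by $\phi^n$, so the composition of the inclusion $C\hookrightarrow\mathbb{P}^N$ with $\phi^n$ is again that inclusion; comparing the degrees of the pullback of a hyperplane class then gives $d^n\deg C=\deg C$, which is impossible since $d^n>1$. Hence every $\phi^n$ is non-degenerate and each $\Phi_n(\phi)$ and $\Phi^{\ast}_n(\phi)$ is a well-defined zero-cycle (alternatively, one may simply invoke the standing hypothesis of Section~\ref{pn}).

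Next, Proposition~\ref{prop22} gives $\deg(\Phi_n(\phi))=\sum_{j=0}^N d^{nj}\geq 1+d^n$, which tends to infinity with $n$ because $d>1$; in particular it is unbounded over the infinitely many $n$ with $\Char K\nmid n$. Corollary~\ref{cor10} then immediately yields that $\phi$ has infinitely many periodic points and periodic points of arbitrarily large primitive period.

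For the final assertion I would use Proposition~\ref{prop17}: for a prime $q$ one has $\deg(\Phi^{\ast}_q(\phi))=\deg(\Phi_q(\phi))-\deg(\Phi_1(\phi))=\sum_{j=0}^N\bigl(d^{qj}-d^j\bigr)$, and the $j=1$ term $d^q-d$ is strictly positive since $d>1$ and $q\geq 2$; hence $\deg(\Phi^{\ast}_q(\phi))\neq 0$ for every prime $q$. Theorem~\ref{thm21} then furnishes a constant $M$ such that for all primes $q>M$ this non-vanishing forces a periodic point of primitive period $q$, which is precisely the stated conclusion. The only real obstacle is the bookkeeping around non-degeneracy; once that is handled, the rest is a direct substitution into results already established.
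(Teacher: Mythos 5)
Your proof follows essentially the same route as the paper's: Proposition~\ref{prop22} gives unbounded $\deg(\Phi_n(\phi))$, Corollary~\ref{cor10} gives the first assertion, and the explicit formula $\deg(\Phi^{\ast}_q(\phi))=\sum_{j=0}^{N}(d^{qj}-d^{j})>0$ together with Theorem~\ref{thm21} gives the second. The one thing you add that the paper does not make explicit is the verification that $\phi^{n}$ is automatically non-degenerate when $d>1$: if $\Gamma_{\phi^n}\cap\Delta$ had a positive-dimensional component, a curve $C$ inside it would be fixed pointwise by $\phi^{n}$, and pulling back the hyperplane class along $\phi^n|_C=\mathrm{id}_C$ would force $d^{n}\deg C=\deg C$, impossible for $d>1$. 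The paper instead relies on the standing non-degeneracy hypothesis stated at the head of the section, so your observation upgrades the statement from conditional to unconditional for morphisms of $\mathbb{P}^{N}$ of degree greater than one; this is a worthwhile supplement rather than a gap in either argument.
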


        \begin{proof}
            The degree $\deg(\Phi_n(\phi))$ is clearly unbounded from Proposition \ref{prop22}, so we
            apply Corollary \ref{cor10} to conclude the first result.

            To see the second result, notice that
            \begin{align*}
                \deg(\Phi^{\ast}_q(\phi)) &= \deg(\Phi_q(\phi)) - \deg(\Phi_1(\phi))\\
                    &=((d^q)^N + \cdots + (d^q) + 1) - (d^N + \cdots + d + 1) \\
                    &= ((d^q)^N - d^N) + \cdots (d^q - d)\\
                    &> 0.
            \end{align*}
            Hence, we apply Theorem \ref{thm21} to conclude the
            result.
        \end{proof}

\setlength{\bibsep}{0pt}
\bibliography{masterlist}
\bibliographystyle{plain}

\begin{center}
Department of Mathematics and Computer Science, Amherst College, Amherst MA, 01002 USA\\
e-mail: bhutz@amherst.edu
\end{center}
\end{document}